\providecommand{\U}[1]{\protect\rule{.1in}{.1in}}
\providecommand{\U}[1]{\protect\rule{.1in}{.1in}}
\newtheorem{theorem}{Theorem}
\newtheorem{acknowledgement}[theorem]{Acknowledgement}
\newtheorem{corollary}[theorem]{Corollary}
\newtheorem{definition}[theorem]{Definition}
{\theorembodyfont{\rmfamily}

}
\newtheorem{lemma}[theorem]{Lemma}
\newtheorem{notation}[theorem]{Notation}
\newtheorem{proposition}[theorem]{Proposition}
{\theorembodyfont{\rmfamily}
\newtheorem{remark}[theorem]{Remark}
}
\newenvironment{proof}[1][Proof]{\noindent\textbf{#1} }{\ \rule{0.5em}{0.5em}}
\begin{document}

\title{On the best constant in Gaffney inequality}
\date{}
\author{G. CSATO,\quad B. DACOROGNA\quad and\quad S. SIL\smallskip\\G.C.: Facultad de Ciencias Fisicas y Matematicas\\Universidad de Concepcion, Concepcion, Chile\\gy.csato.ch@gmail.com\smallskip\\B.D.: Section de Math\'{e}matiques\\EPFL, 1015 Lausanne, Switzerland\\bernard.dacorogna@epfl.ch\smallskip\\S.S.: Section de Math\'{e}matiques\\EPFL, 1015 Lausanne, Switzerland\\swarnendu.sil@epfl.ch}
\maketitle

\begin{abstract}
We discuss the value of the best constant in Gaffney inequality namely%
\[
\left\Vert \nabla\omega\right\Vert _{L^{2}}^{2}\leq C\left(  \left\Vert
d\omega\right\Vert _{L^{2}}^{2}+\left\Vert \delta\omega\right\Vert _{L^{2}%
}^{2}+\left\Vert \omega\right\Vert _{L^{2}}^{2}\right)
\]
when either $\nu\wedge\omega=0$ or $\nu\,\lrcorner\,\omega=0$ on
$\partial\Omega.$

\end{abstract}

\section{Introduction}

We start by recalling Gaffney inequality for vector fields. Let $\Omega
\subset\mathbb{R}^{n}$ be a bounded open smooth set and $\nu$ be the outward
unit normal to $\partial\Omega.$ Then there exists a constant $C=C\left(
\Omega\right)  >0$ such that for every vector field $u\in W^{1,2}\left(
\Omega;\mathbb{R}^{n}\right)  $%
\[
\left\Vert \nabla u\right\Vert _{L^{2}}^{2}\leq C\left(  \left\Vert
\operatorname{curl}u\right\Vert _{L^{2}}^{2}+\left\Vert \operatorname{div}%
u\right\Vert _{L^{2}}^{2}+\left\Vert u\right\Vert _{L^{2}}^{2}\right)
\]
where, on $\partial\Omega,$ either $\nu\wedge u=0$ (i.e. $u$ is parallel to
$\nu$ and we write then $u\in W_{T}^{1,2}\left(  \Omega;\mathbb{R}^{n}\right)
$) or $\nu\,\lrcorner\,u=0$ (i.e. $u$ is orthogonal to $\nu$ and we write then
$u\in W_{N}^{1,2}\left(  \Omega;\mathbb{R}^{n}\right)  $). In the context of
differential forms (identifying $1-$forms with vector fields) this generalizes
to (using the notations of \cite{Csato-Dac-Kneuss(livre)} which are summarized
in the next section) the following theorem (for references see below).

\begin{theorem}
[Gaffney inequality]Let $0\leq k\leq n$ and $\Omega\subset\mathbb{R}^{n}$ be a
bounded open $C^{2}$ set. Then there exists a constant $C=C\left(
\Omega,k\right)  >0$ such that%
\[
\left\Vert \nabla\omega\right\Vert _{L^{2}}^{2}\leq C\left(  \left\Vert
d\omega\right\Vert _{L^{2}}^{2}+\left\Vert \delta\omega\right\Vert _{L^{2}%
}^{2}+\left\Vert \omega\right\Vert _{L^{2}}^{2}\right)
\]
for every $\omega\in W_{T}^{1,2}\left(  \Omega;\Lambda^{k}\right)  \cup
W_{N}^{1,2}\left(  \Omega;\Lambda^{k}\right)  .$
\end{theorem}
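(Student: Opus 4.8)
The plan is to prove the inequality first for smooth forms and then pass to the general case by density. So I would begin by recalling (or establishing) that the smooth forms satisfying $\nu\wedge\omega=0$ (respectively $\nu\,\lrcorner\,\omega=0$) on $\partial\Omega$ are dense in $W_T^{1,2}(\Omega;\Lambda^k)$ (respectively $W_N^{1,2}(\Omega;\Lambda^k)$) for the $W^{1,2}$ norm; this is a standard approximation result for $C^2$ domains. It then suffices to prove the estimate for $\omega\in C^\infty(\overline\Omega;\Lambda^k)$ with the appropriate boundary condition, since both sides of the inequality are continuous with respect to $W^{1,2}$ convergence.

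The core of the argument is the Gaffney--Friedrichs integration-by-parts identity. In flat space $\mathbb{R}^n$ the Weitzenb\"ock formula carries no curvature term, so the Hodge Laplacian $\delta d+d\delta$ coincides with the componentwise (Bochner) Laplacian $\nabla^\ast\nabla$. I would combine this with the two Green formulas
\[
\int_\Omega\langle d\omega,\eta\rangle=\int_\Omega\langle\omega,\delta\eta\rangle+\int_{\partial\Omega}\langle\nu\wedge\omega,\eta\rangle, \qquad \int_\Omega\langle\nabla^\ast\nabla\omega,\omega\rangle=\int_\Omega|\nabla\omega|^2-\int_{\partial\Omega}\langle\nabla_\nu\omega,\omega\rangle,
\]
applied with $\eta=d\omega$ together with the analogous one for $\delta$. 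Equating the two resulting expressions for $\int_\Omega\langle(\delta d+d\delta)\omega,\omega\rangle$ yields
\[
\int_\Omega|\nabla\omega|^2=\int_\Omega\left(|d\omega|^2+|\delta\omega|^2\right)+\int_{\partial\Omega}\mathcal{B}(\omega),
\]
where $\mathcal{B}(\omega)$ collects all the boundary contributions.

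The decisive step is to show that, under either boundary condition, $\mathcal{B}(\omega)$ reduces to a quadratic form in the trace of $\omega$ alone, with coefficients governed by the second fundamental form of $\partial\Omega$; in particular all terms containing normal derivatives of $\omega$ must cancel. I would verify this by a local computation in boundary (normal) coordinates: flattening $\partial\Omega$ and using $\nu\wedge\omega=0$ (resp. $\nu\,\lrcorner\,\omega=0$) to eliminate the derivative terms, leaving only the shape-operator contribution. Since $\Omega$ is of class $C^2$, the second fundamental form is continuous and bounded on $\partial\Omega$, whence $|\mathcal{B}(\omega)|\le C\int_{\partial\Omega}|\omega|^2$.

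Finally I would absorb the boundary integral. The trace interpolation inequality, valid on $C^1$ domains, gives for every $\varepsilon>0$
\[
\int_{\partial\Omega}|\omega|^2\le\varepsilon\left\Vert\nabla\omega\right\Vert_{L^2}^2+C_\varepsilon\left\Vert\omega\right\Vert_{L^2}^2.
\]
Inserting this into the identity and choosing $\varepsilon$ so small that $C\varepsilon<1$, one moves the term $C\varepsilon\left\Vert\nabla\omega\right\Vert_{L^2}^2$ to the left-hand side and obtains the claimed inequality. I expect the main obstacle to be the third step above: the explicit derivation of $\mathcal{B}(\omega)$ and the verification that the normal-derivative terms cancel under the boundary conditions, since this is precisely where the geometry of $\partial\Omega$ (through its second fundamental form) and the precise role of the boundary conditions enter, and where an error would destroy the final absorption argument.
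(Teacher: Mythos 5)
Your proposal is correct and is precisely the argument the paper relies on: the integration-by-parts identity you derive is (\ref{Equation 1 dans CT=1}) (quoted from Theorem 5.7 of \cite{Csato-Dac-Kneuss(livre)} and rederived in Lemma \ref{afterintegrationbypartslipschitz} and Theorem \ref{piecewisesmoothformula}), the reduction of the boundary term to a quadratic form in the trace of $\omega$ with coefficients given by the second fundamental form --- hence bounded on a $C^{2}$ boundary --- is exactly Lemma \ref{lemma:L nu equal bilinear form with curvatures and dual version}, and the trace-interpolation absorption is carried out verbatim in Step 7.2 of the proof of Theorem \ref{Theoreme equiv pour CT}. The only step you leave as a plan (the cancellation of the normal-derivative terms in $\mathcal{B}(\omega)$ under either boundary condition) is indeed the crux, but your proposed method for it is the one the paper executes, so nothing essential is missing.
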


The aim of this article is to study the best constant in such inequality. We
therefore define%
\begin{equation}
C_{T}\left(  \Omega,k\right)  =\sup_{\omega\in W_{T}^{1,2}\setminus\left\{
0\right\}  }\left\{  \frac{\left\Vert \nabla\omega\right\Vert ^{2}}{\left\Vert
d\omega\right\Vert ^{2}+\left\Vert \delta\omega\right\Vert ^{2}+\left\Vert
\omega\right\Vert ^{2}}\right\}  \label{definition de C T}%
\end{equation}%
\begin{equation}
C_{N}\left(  \Omega,k\right)  =\sup_{\omega\in W_{N}^{1,2}\setminus\left\{
0\right\}  }\left\{  \frac{\left\Vert \nabla\omega\right\Vert ^{2}}{\left\Vert
d\omega\right\Vert ^{2}+\left\Vert \delta\omega\right\Vert ^{2}+\left\Vert
\omega\right\Vert ^{2}}\right\}  \label{definition de C N}%
\end{equation}
where $\left\Vert \cdot\right\Vert $ stands for the $L^{2}-$norm. It is easy
to see (cf.\thinspace Proposition \ref{Proposition CT et CN plus grand que 1})
that $C_{T}\left(  \Omega,k\right)  ,C_{N}\left(  \Omega,k\right)  \geq1.$ The
cases $k=0$ and $k=n$ are trivial and we always have then that $C_{T}%
=C_{N}=1;$ so the discussion deals with the case $1\leq k\leq n-1.$ In our
main result, we need the concept of $k-$convexity (cf.\quad Definition
\ref{Definition k-convexite}), which generalizes the usual notion of
convexity, which corresponds to the case of $1-$convexity, while $\left(
n-1\right)  -$convexity means that the mean curvature of $\partial\Omega$ is
non-negative. Our main result (cf.\thinspace Theorem
\ref{Theoreme equiv pour CT}) shows that the following conditions are
equivalent.\smallskip

(i) $C_{T}\left(  \Omega,k\right)  =1$ (respectively $C_{N}\left(
\Omega,k\right)  =1$).\smallskip

(ii) $\Omega$ is $\left(  n-k\right)  -$convex (respectively $\Omega$ is
$k-$convex).\smallskip

(iii) The sharper version of Gaffney inequality holds namely $\forall
\,\omega\in W_{T}^{1,2}\left(  \Omega;\Lambda^{k}\right)  $ (respectively
$\forall\,\omega\in W_{N}^{1,2}\left(  \Omega;\Lambda^{k}\right)  $)%
\[
\left\Vert \nabla\omega\right\Vert ^{2}\leq\left\Vert d\omega\right\Vert
^{2}+\left\Vert \delta\omega\right\Vert ^{2}.
\]

(iv) The respective supremum is not attained.\smallskip

(v) $C_{T}$ (respectively $C_{N}$) is scale invariant, namely, for every $t>0$%
\[
C_{T}\left(  t\,\Omega,k\right)  =C_{T}\left(  \Omega,k\right)  .
\]

The result that $\Omega$ is $\left(  n-k\right)  -$convex implies that
$C_{T}\left(  \Omega,k\right)  =1$ was already observed by Mitrea \cite{Mitrea
2001}.\smallskip

We also show that for general non-convex domains the constants $C_{T}\left(
\Omega,k\right)  $ and $C_{N}\left(  \Omega,k\right)  $ can be arbitrarily
large (see Proposition \ref{Proposition C arbitr grand}).\smallskip

The smoothness of the domain is essential in the previous discussion. We
indeed prove (see Theorem \ref{Theoreme polytopes}) that if $\Omega$ is a
polytope (convex or not), or even more generally a set whose boundary is
composed only of hyperplanes, then%
\[
\left\Vert \nabla\omega\right\Vert ^{2}=\left\Vert d\omega\right\Vert
^{2}+\left\Vert \delta\omega\right\Vert ^{2},\quad\forall\,\omega\in C_{T}%
^{1}\left(  \overline{\Omega};\Lambda^{k}\right)  .
\]
i.e. Gaffney inequality is, in fact, an equality with constant $C_{T}\left(
\Omega,k\right)  =1$ (the result is also valid with $T$ replaced by $N$). This
fact was already observed in \cite{Costabel Dauge 1999} when $n=3$ and $k=1$
for polyhedra.\smallskip

We now comment briefly on the history of Gaffney inequality. The proof goes
back to Gaffney \cite{Gaffney1}, \cite{Gaffney2} for manifolds without
boundary and for manifolds with boundary to Friedrichs \cite{Friedrichs},
Morrey \cite{Morrey 1956} and Morrey-Eells \cite{MorreyEells}. Further
contributions are in Bolik \cite{Bolik 2001} (for a version in $L^{p}$ and in
H\"{o}lder spaces), Csat\'{o}-Dacorogna-Kneuss \cite{Csato-Dac-Kneuss(livre)},
Iwaniec-Martin \cite{Iwaniec-Martin}, Iwaniec-Scott-Stroffolini
\cite{Iwaniec-Scott-Stroffolini} (for a version in $L^{p}$), Mitrea
\cite{Mitrea 2001}, Mitrea-Mitrea \cite{Mitrea-Mitrea 2002}, Morrey
\cite{Morrey 1966}, Schwarz \cite{Schwarz}, Taylor \cite{Taylor} and von Wahl
\cite{Wahl}. Specifically for the inequality for vector fields in dimension
$2$ and $3,$ we can refer to Amrouche-Bernardi-Dauge-Girault
\cite{Amrouche-Bernardi-Dauge-Girault}, Costabel \cite{Costabel 1991} and
Dautray-Lions \cite{Dautray-Lions}.\smallskip

A stronger version of the classical Gaffney inequality reads as%
\[
\left\Vert \nabla\omega\right\Vert ^{2}\leq C\left(  \left\Vert d\omega
\right\Vert ^{2}+\left\Vert \delta\omega\right\Vert ^{2}+\left\Vert
\omega\right\Vert ^{2}\right)  ,\quad\forall\,\omega\in W_{T}^{d,\delta
,2}\left(  \Omega;\Lambda^{k}\right)
\]
(and similarly with $T$ replaced by $N$) where%
\[
\omega\in W_{T}^{d,\delta,2}\left(  \Omega;\Lambda^{k}\right)  =\left\{
\omega\in L^{2}\left(  \Omega;\Lambda^{k}\right)  :\left\{
\begin{array}
[c]{c}%
d\omega\in L^{2}\left(  \Omega;\Lambda^{k+1}\right) \\
\delta\omega\in L^{2}\left(  \Omega;\Lambda^{k-1}\right) \\
\nu\wedge\omega=0\text{ on }\partial\Omega
\end{array}
\right.  \right\}
\]
and the boundary condition has to be understood in a very weak sense. Clearly
$W_{T}^{1,2}\subset W_{T}^{d,\delta,2}.$ This stronger inequality is, in fact,
a regularity result and is valid for smooth or convex Lipschitz domains
leading, a posteriori, to%
\[
W_{T}^{d,\delta,2}\left(  \Omega;\Lambda^{k}\right)  =W_{T}^{1,2}\left(
\Omega;\Lambda^{k}\right)  .
\]
However, for non-convex Lipschitz domains one has, in general, $W_{T}%
^{1,2}\neq W_{T}^{d,\delta,2}.$ We refer to Mitrea \cite{Mitrea 2001} and
Mitrea-Mitrea \cite{Mitrea-Mitrea 2002}; while for vector fields in dimension
$2$ and $3,$ see Amrouche-Bernardi-Dauge-Girault
\cite{Amrouche-Bernardi-Dauge-Girault}, Ben Belgacem-Bernardi-Costabel-Dauge
\cite{Belgacem-Bernardi-Costabel-Dauge}, Ciarlet-Hazard-Lohrengel
\cite{Ciarlet et alt. 1998}, Costabel \cite{Costabel 1990}, Costabel-Dauge
\cite{Costabel Dauge 1998} and Girault-Raviart \cite{Girault-Raviart}%
.\smallskip

Clearly Gaffney inequality for $k=1$ is reminiscent of Korn inequality. The
best constant in Korn inequality have been investigated by Bauer-Pauly
\cite{Bauer-Pauly} and Desvillettes-Villani \cite{Desvillettes-Villani
(2002)}. Our results (cf. Corollary \ref{Corollaire Gaffney et Korn}) allow us
to recover the best constant found in \cite{Bauer-Pauly}.\smallskip

We should end this introduction with a striking analogy with the classical
Hardy inequality (cf.,\thinspace for example \cite{Marcus-Mizel-Pinchover} and
the bibliography therein). Indeed, classically the best constant $\mu,$ when
the domain is convex (and in fact $\left(  n-1\right)  -$convex, see
\cite{Lewis-Li-Li 2012}), is independent of the dimension (in this case
$\mu=1/4$) and the best constant is not attained; while for general non-convex
domains the best constant is, in general, strictly less than $1/4.$ However
the authors were not able to see if this connection is fortuitous or not.

\section{Notations}

We now fix the notations, for further details we refer to
\cite{Csato-Dac-Kneuss(livre)}.\smallskip

\emph{(i)} A $k-$form $\omega\in\Lambda^{k}=\Lambda^{k}\left(  \mathbb{R}%
^{n}\right)  $ is written as%
\[
\omega=\sum_{1\leq i_{1}<\cdots<i_{k}\leq n}\omega^{i_{1}\cdots i_{k}%
}dx^{i_{1}}\wedge\cdots\wedge dx^{i_{k}}%
\]
when convenient it is identified to a vector in $\mathbb{R}^{\binom{n}{k}}.$
When necessary, we extend, in a natural way, the definition of $\omega
^{i_{1}\cdots i_{k}}$ to any $1\leq i_{1},\cdots,i_{k}\leq n$ (see Notations
2.5 (iii) in \cite{Csato-Dac-Kneuss(livre)}).\smallskip

\emph{(ii)} The exterior product of $\nu\in\Lambda^{1}$ and $\omega\in
\Lambda^{k}$ is $\nu\wedge\omega\in\Lambda^{k+1}$ and is defined as%
\begin{align*}
\nu\wedge\omega &  =\sum_{1\leq i_{1}<\cdots<i_{k}\leq n}\left[  \sum
_{j=1}^{n}\nu^{j}\omega^{i_{1}\cdots i_{k}}\right]  dx^{j}\wedge dx^{i_{1}%
}\wedge\cdots\wedge dx^{i_{k}}\smallskip\\
&  =\sum_{1\leq i_{1}<\cdots<i_{k+1}\leq n}\left[  \sum_{\gamma=1}%
^{k+1}\left(  -1\right)  ^{\gamma-1}\nu^{i_{\gamma}}\omega^{i_{1}\cdots
i_{\gamma-1}i_{\gamma+1}\cdots i_{k+1}}\right]  dx^{i_{1}}\wedge\cdots\wedge
dx^{i_{k+1}}.
\end{align*}

\emph{(iii)} The interior product of $\nu\in\Lambda^{1}$ and $\omega\in
\Lambda^{k}$ is $\nu\,\lrcorner\,\omega\in\Lambda^{k-1}$ and is defined as%
\[
\nu\,\lrcorner\,\omega=\sum_{1\leq i_{1}<\cdots<i_{k-1}\leq n}\left[
\sum_{j=1}^{n}\nu^{j}\omega^{ji_{1}\cdots i_{k-1}}\right]  dx^{i_{1}}%
\wedge\cdots\wedge dx^{i_{k-1}}.
\]

\emph{(iv)} The scalar product of $\omega,\lambda\in\Lambda^{k}$ is defined as%
\[
\left\langle \omega;\lambda\right\rangle =\sum_{1\leq i_{1}<\cdots<i_{k}\leq
n}\left(  \omega^{i_{1}\cdots i_{k}}\lambda^{i_{1}\cdots i_{k}}\right)
=\frac{1}{k!}\,\sum_{1\leq i_{1},\cdots,i_{k}\leq n}\left(  \omega
^{i_{1}\cdots i_{k}}\lambda^{i_{1}\cdots i_{k}}\right)
\]
the associated norm being%
\[
\left\vert \omega\right\vert ^{2}=\sum_{1\leq i_{1}<\cdots<i_{k}\leq n}\left(
\omega^{i_{1}\cdots i_{k}}\right)  ^{2}=\frac{1}{k!}\,\sum_{1\leq i_{1}%
,\cdots,i_{k}\leq n}\left(  \omega^{i_{1}\cdots i_{k}}\right)  ^{2}.
\]
When $k=1$ the interior and the scalar product coincide.\smallskip

\emph{(v)} The Hodge $\ast$ operator associates to $\omega\in\Lambda^{k},$
$\ast\omega\in\Lambda^{n-k}$ via the operation%
\[
\omega\wedge\lambda=\left\langle \ast\omega;\lambda\right\rangle dx^{1}%
\wedge\cdots\wedge dx^{n},\quad\text{for every }\lambda\in\Lambda^{n-k}.
\]
The interior product of $\nu\in\Lambda^{1}$ and $\omega\in\Lambda^{k}$ can be
then written as%
\[
\nu\,\lrcorner\,\omega=\left(  -1\right)  ^{n\left(  k-1\right)  }\ast\left(
\nu\wedge\left(  \ast\omega\right)  \right)  .
\]
We also use several times the identities%
\begin{equation}
\nu\wedge\left(  \nu\,\lrcorner\,\omega\right)  +\nu\,\lrcorner\,\left(
\nu\wedge\omega\right)  =\left\vert \nu\right\vert ^{2}\omega\quad
\text{and}\quad\left\langle \nu\wedge\alpha;\beta\right\rangle =\left\langle
\alpha;\nu\,\lrcorner\,\beta\right\rangle .
\label{equation:decomposing omega by nu}%
\end{equation}

\emph{(vi)} The exterior derivative of $\omega\in\Lambda^{k}$ is $d\omega
\in\Lambda^{k+1}$ and is defined as%
\begin{align*}
d\omega &  =\sum_{1\leq i_{1}<\cdots<i_{k}\leq n}\left[  \sum_{j=1}^{n}%
\omega_{x_{j}}^{i_{1}\cdots i_{k}}\right]  dx^{j}\wedge dx^{i_{1}}\wedge
\cdots\wedge dx^{i_{k}}\smallskip\\
&  =\sum_{1\leq i_{1}<\cdots<i_{k+1}\leq n}\left[  \sum_{\gamma=1}%
^{k+1}\left(  -1\right)  ^{\gamma-1}\omega_{x_{i_{\gamma}}}^{i_{1}\cdots
i_{\gamma-1}i_{\gamma+1}\cdots i_{k+1}}\right]  dx^{i_{1}}\wedge\cdots\wedge
dx^{i_{k+1}}.
\end{align*}
When $k=1$ we can identify $d\omega$ with $\operatorname{curl}\omega
.$\smallskip

\emph{(vii)} The interior derivative (or codifferential) of $\omega\in
\Lambda^{k}$ is $\delta\omega\in\Lambda^{k-1}$ and is defined as%
\[
\delta\omega=\sum_{1\leq i_{1}<\cdots<i_{k-1}\leq n}\left(  \sum_{j=1}%
^{n}\omega_{x_{j}}^{ji_{1}\cdots i_{k-1}}\right)  dx^{i_{1}}\wedge\cdots\wedge
dx^{i_{k-1}}.
\]
When $k=1$ we can identify $\delta\omega$ with $\operatorname{div}\omega
.$\smallskip

\emph{(viii)} The spaces $W_{T}^{1,2}\left(  \Omega;\Lambda^{k}\right)  $ and
$W_{N}^{1,2}\left(  \Omega;\Lambda^{k}\right)  $ are defined as%
\[
W_{T}^{1,2}\left(  \Omega;\Lambda^{k}\right)  =\left\{  \omega\in
W^{1,2}\left(  \Omega;\Lambda^{k}\right)  :\nu\wedge\omega=0\text{ on
}\partial\Omega\right\}
\]%
\[
W_{N}^{1,2}\left(  \Omega;\Lambda^{k}\right)  =\left\{  \omega\in
W^{1,2}\left(  \Omega;\Lambda^{k}\right)  :\nu\,\lrcorner\,\omega=0\text{ on
}\partial\Omega\right\}
\]
where $\nu$ is the outward unit normal to $\partial\Omega.$\smallskip

\emph{(ix)} The sets $\mathcal{H}_{T}\left(  \Omega;\Lambda^{k}\right)  $ and
$\mathcal{H}_{N}\left(  \Omega;\Lambda^{k}\right)  $ are defined as%
\[
\mathcal{H}_{T}\left(  \Omega;\Lambda^{k}\right)  =\left\{  \omega\in
W_{T}^{1,2}\left(  \Omega;\Lambda^{k}\right)  :d\omega=0\text{ and }%
\delta\omega=0\text{ in }\Omega\right\}
\]%
\[
\mathcal{H}_{N}\left(  \Omega;\Lambda^{k}\right)  =\left\{  \omega\in
W_{N}^{1,2}\left(  \Omega;\Lambda^{k}\right)  :d\omega=0\text{ and }%
\delta\omega=0\text{ in }\Omega\right\}  .
\]

\section{Some generalities}

Our first result is the following.

\begin{proposition}
\label{Proposition CT et CN plus grand que 1}Let $\Omega\subset\mathbb{R}^{n}$
be a bounded open set and $0\leq k\leq n.$ Then%
\[
C_{T}\left(  \Omega,k\right)  ,C_{N}\left(  \Omega,k\right)  \geq1.
\]
Moreover%
\[
C_{T}\left(  \Omega,0\right)  =C_{N}\left(  \Omega,0\right)  =C_{T}\left(
\Omega,n\right)  =C_{N}\left(  \Omega,n\right)  =1\quad\text{and}\quad
C_{T}\left(  \Omega,k\right)  =C_{N}\left(  \Omega,n-k\right)  .
\]

\end{proposition}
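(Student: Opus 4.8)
The plan is to establish the three assertions in turn: first the lower bound $C_{T},C_{N}\geq1$ by a concentration argument, then the duality $C_{T}\left(\Omega,k\right)=C_{N}\left(\Omega,n-k\right)$ via the Hodge $\ast$ operator, and finally the endpoint values at $k=0$ and $k=n$ by combining the duality with a direct computation.

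For the lower bound I would work with compactly supported smooth forms. If $\phi\in C_{c}^{\infty}\left(\Omega;\Lambda^{k}\right)$, then the boundary conditions defining both $W_{T}^{1,2}$ and $W_{N}^{1,2}$ hold trivially, and the flat-space Bochner (Gaffney) identity gives the integrated equality $\left\Vert \nabla\phi\right\Vert ^{2}=\left\Vert d\phi\right\Vert ^{2}+\left\Vert \delta\phi\right\Vert ^{2}$; this is because on $\mathbb{R}^{n}$ the Hodge Laplacian $d\delta+\delta d$ acts componentwise as the scalar Laplacian, hence equals $\nabla^{\ast}\nabla$, so integration by parts over a compactly supported form produces neither a curvature nor a boundary contribution. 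Fixing one such $\phi\neq0$ and setting $\phi_{\varepsilon}\left(x\right)=\phi\left(\left(x-x_{0}\right)/\varepsilon\right)$ for an interior point $x_{0}$ and small $\varepsilon>0$, the first-order scaling gives $\left\Vert \nabla\phi_{\varepsilon}\right\Vert ^{2}=\varepsilon^{n-2}\left\Vert \nabla\phi\right\Vert ^{2}$, $\left\Vert d\phi_{\varepsilon}\right\Vert ^{2}+\left\Vert \delta\phi_{\varepsilon}\right\Vert ^{2}=\varepsilon^{n-2}\left\Vert \nabla\phi\right\Vert ^{2}$ and $\left\Vert \phi_{\varepsilon}\right\Vert ^{2}=\varepsilon^{n}\left\Vert \phi\right\Vert ^{2}$, so the Rayleigh quotient equals $\left\Vert \nabla\phi\right\Vert ^{2}/\left(\left\Vert \nabla\phi\right\Vert ^{2}+\varepsilon^{2}\left\Vert \phi\right\Vert ^{2}\right)\to1$ as $\varepsilon\to0$ (note $\left\Vert \nabla\phi\right\Vert >0$, since by the same identity a compactly supported form with $d\phi=\delta\phi=0$ must vanish). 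As $\phi_{\varepsilon}$ lies in both $W_{T}^{1,2}$ and $W_{N}^{1,2}$, this yields $C_{T},C_{N}\geq1$.

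For the duality, recall that $\ast\colon\Lambda^{k}\to\Lambda^{n-k}$ is a pointwise isometry with constant coefficients, so $\left\Vert \ast\omega\right\Vert =\left\Vert \omega\right\Vert $ and, since $\ast$ commutes with each $\partial_{x_{i}}$, also $\left\Vert \nabla\left(\ast\omega\right)\right\Vert =\left\Vert \nabla\omega\right\Vert $. Moreover $\ast$ interchanges $d$ and $\delta$ up to sign, whence $\left\Vert d\left(\ast\omega\right)\right\Vert =\left\Vert \delta\omega\right\Vert $ and $\left\Vert \delta\left(\ast\omega\right)\right\Vert =\left\Vert d\omega\right\Vert $, so the denominator $\left\Vert d\omega\right\Vert ^{2}+\left\Vert \delta\omega\right\Vert ^{2}+\left\Vert \omega\right\Vert ^{2}$ is unchanged under $\omega\mapsto\ast\omega$. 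Finally, applying the relation $\nu\,\lrcorner\,\eta=\left(-1\right)^{n\left(\deg\eta-1\right)}\ast\left(\nu\wedge\ast\eta\right)$ of Section 2(v) to $\eta=\ast\omega$ gives $\nu\,\lrcorner\,\left(\ast\omega\right)=\pm\ast\left(\nu\wedge\omega\right)$; since $\ast$ is an isomorphism, $\nu\wedge\omega=0$ on $\partial\Omega$ if and only if $\nu\,\lrcorner\,\left(\ast\omega\right)=0$ on $\partial\Omega$. Hence $\omega\mapsto\ast\omega$ is a bijection from $W_{T}^{1,2}\left(\Omega;\Lambda^{k}\right)$ onto $W_{N}^{1,2}\left(\Omega;\Lambda^{n-k}\right)$ preserving the entire Rayleigh quotient, and taking suprema gives $C_{T}\left(\Omega,k\right)=C_{N}\left(\Omega,n-k\right)$.

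It then remains to treat $k=0$ and $k=n$. For a $0$-form $\omega=f$ one has $\delta f=0$ and $\left\vert df\right\vert =\left\vert \nabla f\right\vert $ pointwise, so the quotient reduces to $\left\Vert \nabla f\right\Vert ^{2}/\left(\left\Vert \nabla f\right\Vert ^{2}+\left\Vert f\right\Vert ^{2}\right)<1$; thus $C_{T}\left(\Omega,0\right),C_{N}\left(\Omega,0\right)\leq1$, and with the lower bound already in hand, $C_{T}\left(\Omega,0\right)=C_{N}\left(\Omega,0\right)=1$. The case $k=n$ follows from the duality, $C_{T}\left(\Omega,n\right)=C_{N}\left(\Omega,0\right)=1$ and $C_{N}\left(\Omega,n\right)=C_{T}\left(\Omega,0\right)=1$. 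The only step demanding care is the duality, where the precise signs in $\delta=\pm\ast d\ast$ and in $\nu\,\lrcorner\,\left(\ast\omega\right)=\pm\ast\left(\nu\wedge\omega\right)$ must be tracked; however, since only the $L^{2}$-norms and the vanishing of the boundary traces enter, the signs are immaterial, so I expect no genuine obstacle and the argument goes through for every bounded open $\Omega$ for which the traces defining $W_{T}^{1,2}$ and $W_{N}^{1,2}$ are meaningful.
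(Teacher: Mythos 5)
Your proof is correct and follows essentially the same route as the paper: both rest on the identity $\left\Vert \nabla\phi\right\Vert ^{2}=\left\Vert d\phi\right\Vert ^{2}+\left\Vert \delta\phi\right\Vert ^{2}$ for forms compactly supported in $\Omega$, obtain $C_{T}\left(  \Omega,k\right)  =C_{N}\left(  \Omega,n-k\right)  $ from the Hodge $\ast$ operator, and settle $k=0,n$ by identifying $\nabla$ with $d$ (respectively $\delta$). The only difference is the test sequence used to make the zeroth-order term negligible: you concentrate a fixed bump by rescaling, whereas the paper oscillates, taking $\omega_{m}=\sin\left(  mx_{1}\right)  \eta\left(  x\right)  dx^{1}\wedge\cdots\wedge dx^{k}$ with $m\rightarrow\infty$; both mechanisms force $\left\Vert \omega\right\Vert ^{2}/\left\Vert \nabla\omega\right\Vert ^{2}\rightarrow0$ and are equally valid.
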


\begin{remark}
When $k=0$ (respectively $k=n$), $\nabla\omega$ can be identified with
$d\omega$ (respectively $\delta\omega$). Therefore, for any $\omega\in
W^{1,2},$%
\[
\left\Vert \nabla\omega\right\Vert ^{2}=\left\Vert d\omega\right\Vert
^{2}\quad\text{(respectively }\left\Vert \nabla\omega\right\Vert
^{2}=\left\Vert \delta\omega\right\Vert ^{2}\text{).}%
\]
Hence the statements in the proposition when $k=0$ or $n$ are trivial.
\end{remark}

\begin{proof}
\emph{Step 1. }We first prove the main statement. Let $\overline{x}=\left(
\overline{x}_{1},\cdots,\overline{x}_{n}\right)  \in\Omega$ and $0<r<R$ be
such that%
\[
B_{r}\left(  \overline{x}\right)  \subset\Omega\subset B_{R}\left(
\overline{x}\right)  .
\]
Choose a function $\eta\in C_{0}^{\infty}\left(  \Omega\right)  $ such that
$\eta=1$ in $B_{r}$ and $0\leq\eta\leq1$ in $\Omega.$ We extend it to
$\mathbb{R}^{n}$ by $0.$ Define for every $m\in\mathbb{N}$%
\[
\omega_{m}\left(  x\right)  =\sin\left(  mx_{1}\right)  \eta\left(  x\right)
dx^{1}\wedge\cdots\wedge dx^{k}\in C_{0}^{\infty}\left(  \Omega;\Lambda
^{k}\right)  .
\]
Since $\omega_{m}$ vanishes on the boundary of $B_{R}\,,$ we have, by Theorem
5.7 in \cite{Csato-Dac-Kneuss(livre)} (see also \cite{Csato-Dac 2102}),%
\[%
{\displaystyle\int_{B_{R}}}
\left\vert \nabla\omega_{m}\right\vert ^{2}=%
{\displaystyle\int_{B_{R}}}
\left(  \left\vert d\omega_{m}\right\vert ^{2}+\left\vert \delta\omega
_{m}\right\vert ^{2}\right)  \quad\Rightarrow\quad%
{\displaystyle\int_{\Omega}}
\left\vert \nabla\omega_{m}\right\vert ^{2}=%
{\displaystyle\int_{\Omega}}
\left(  \left\vert d\omega_{m}\right\vert ^{2}+\left\vert \delta\omega
_{m}\right\vert ^{2}\right)  .
\]
To prove that $C_{T},C_{N}\geq1$ it is sufficient to show that%
\[
\lim_{m\rightarrow\infty}\frac{\left\Vert d\omega_{m}\right\Vert
^{2}+\left\Vert \delta\omega_{m}\right\Vert ^{2}+\left\Vert \omega
_{m}\right\Vert ^{2}}{\left\Vert \nabla\omega_{m}\right\Vert ^{2}}%
=1+\lim_{m\rightarrow\infty}\frac{\left\Vert \omega_{m}\right\Vert ^{2}%
}{\left\Vert \nabla\omega_{m}\right\Vert ^{2}}=1.
\]
Note that%
\[
\int_{\Omega}\left\vert \omega_{m}\right\vert ^{2}\leq\operatorname*{meas}%
\Omega.
\]
On the other hand we have that%
\[%
{\displaystyle\int_{\Omega}}
\left\vert \nabla\omega_{m}\right\vert ^{2}\geq\int_{B_{r}}\left\vert
\nabla\omega_{m}\right\vert ^{2}=m^{2}\int_{B_{r}}\cos^{2}\left(
mx_{1}\right)  dx.
\]
Since $B_{r}$ is open, there exists $m$ sufficiently large so that%
\[
\left(  \overline{x}_{1},\overline{x}_{1}+\frac{2\,\pi}{m}\right)  \times
B_{r}^{\prime}\subset B_{r}%
\]
where $B_{r}^{\prime}=\left\{  y=\left(  y_{2},\cdots,y_{n}\right)
\in\mathbb{R}^{n-1}:\left\vert y_{i}-\overline{x}_{i}\right\vert \leq\frac
{r}{n}\,,\text{ }i=2,\cdots,n\right\}  $ is independent of $m.$ We thus obtain
that%
\[%
{\displaystyle\int_{\Omega}}
\left\vert \nabla\omega_{m}\right\vert ^{2}\geq m^{2}\operatorname*{meas}%
\left(  B_{r}^{\prime}\right)  \int_{\overline{x}_{1}}^{\overline{x}%
_{1}+2\,\pi/m}\cos^{2}\left(  mx_{1}\right)  dx_{1}\,.
\]
Use now the change of variables $x_{1}=t/m$ to get%
\[%
{\displaystyle\int_{\Omega}}
\left\vert \nabla\omega_{m}\right\vert ^{2}\geq m\operatorname*{meas}\left(
B_{r}^{\prime}\right)  \int_{m\overline{x}_{1}}^{m\overline{x}_{1}+2\,\pi}%
\cos^{2}\left(  t\right)  dt=m\,\pi\operatorname*{meas}\left(  B_{r}^{\prime
}\right)  .
\]
This shows that $\left\Vert \nabla\omega_{m}\right\Vert ^{2}\rightarrow\infty$
and thus $C_{T},C_{N}\geq1$ as asserted.\smallskip

\emph{Step 2. }The fact that $C_{T}\left(  \Omega,k\right)  =C_{N}\left(
\Omega,n-k\right)  $ is immediate through the Hodge $\ast$ operator.\smallskip
\end{proof}

\section{The main theorem}

\subsection{Statement of the theorem}

We now turn to the main theorem that gives several equivalent properties of
$C_{T}\left(  \Omega,k\right)  =1$ (or analogously $C_{N}\left(
\Omega,k\right)  =1$). We start with a definition (cf. for a similar one
\cite{Sha 1986}).

\begin{definition}
\label{Definition k-convexite}Let $\Omega\subset\mathbb{R}^{n}$ be an open
smooth set and $\Sigma=\partial\Omega$ be the associated $\left(  n-1\right)
-$surface. Let $\gamma_{1},\cdots,\gamma_{n-1}$ be the principal curvatures of
$\Sigma.$ Let $1\leq k\leq n-1.$ We say that $\Omega$ is $k-$\emph{convex} if%
\[
\gamma_{i_{1}}+\cdots+\gamma_{i_{k}}\geq0,\quad\text{for every }1\leq
i_{1}<\cdots<i_{k}\leq n-1.
\]

\end{definition}

\begin{remark}
\emph{(i)} When $k=1,$ it is easy to show that $\Omega$ convex implies that
$\Omega$ is $1-$convex. The reverse implication is also true but deeper. The
result is due to Hadamard under slightly stronger conditions and as stated to
Chern-Lashof \cite{Chern-Lashof 1958} (see also Alexander \cite{Alexander
1977}).\smallskip

\emph{(ii)} When $2\leq k\leq n-1,$ the condition that $\Omega$ is $k-$convex
is strictly weaker than saying that $\Omega$ is convex. In particular when
$k=n-1$ the condition means that the mean curvature of $\Sigma=\partial\Omega$
is non-negative.
\end{remark}

We will also use the following quantities.

\begin{definition}
\label{Definition de L et nu}Let $\Omega\subset\mathbb{R}^{n}$ be open and
$\nu\in C^{1}\left(  \overline{\Omega};\Lambda^{1}\right)  .$ We define for
every $0\leq k\leq n$ the two maps%
\[
L^{\nu},K^{\nu}:\Lambda^{k}\left(  \mathbb{R}^{n}\right)  \rightarrow
\Lambda^{k}\left(  \mathbb{R}^{n}\right)
\]
by $L^{\nu}(\omega)=0$ if $k=0$ and $K^{\nu}(\omega)=0$ if $k=n,$ while%
\[
L^{\nu}(\omega)=\sum_{1\leq i_{1}<\cdots<i_{k}\leq n}\omega^{i_{1}\cdots
i_{k}}\,d\left(  \nu\,\lrcorner\,\left(  dx^{i_{1}}\wedge\cdots\wedge
dx^{i_{k}}\right)  \right)  ,\quad\text{if }k\geq1
\]%
\[
K^{\nu}(\omega)=\sum_{1\leq i_{1}<\cdots<i_{k}\leq n}\omega^{i_{1}\cdots
i_{k}}\,\delta\left(  \nu\wedge\left(  dx^{i_{1}}\wedge\cdots\wedge dx^{i_{k}%
}\right)  \right)  ,\quad\text{if }k\leq n-1.
\]

\end{definition}

\begin{remark}
\label{Remarque: permutation de nu avec L et K}\emph{(i) }If $\nu$ is the unit
normal to a surface $\Sigma$ and is extended to a neighborhood of $\Sigma$
such that $\left\vert \nu\right\vert =1$ everywhere, then (see
\cite{Csato-Dac-Kneuss(livre)} Lemma 5.5)%
\[
L^{\nu}\left(  \nu\wedge\alpha\right)  =\nu\wedge L^{\nu}\left(
\alpha\right)  \quad\text{and}\quad K^{\nu}\left(  \nu\,\lrcorner
\,\alpha\right)  =\nu\,\lrcorner\,K^{\nu}\left(  \alpha\right)  .
\]
The right-hand sides of these expressions do not depend on the chosen
extension, see \cite{Csato-Dac-Kneuss(livre)} Theorem 3.23. We will use this
frequently henceforth.\smallskip

\emph{(ii)} In the remaining part of the article we will always assume that
$\nu$ has been extended to a neighborhood of $\Sigma=\partial\Omega$ so as to
have $\left\vert \nu\right\vert =1.$\smallskip

\emph{(iii)} Note that $L^{\nu}$ is linear in $\omega$ and $\nu.$ By
definition it acts pointwise on $\omega.$ In this way, identifying
$\Lambda^{k}\left(  \mathbb{R}^{n}\right)  $ with vectors $\mathbb{R}%
^{\binom{n}{k}},$ the operator $L^{\nu}$ can be seen as a matrix acting on
$\omega.$ But in $\nu,$ on the contrary, $L^{\nu}$ is a local (differential)
operator. The same holds true for $K^{\nu}.$
\end{remark}

We then have the main result.

\begin{theorem}
\label{Theoreme equiv pour CT}Let $\Omega\subset\mathbb{R}^{n}$ be a bounded
open smooth set and $1\leq k\leq n-1.$ Then the following statements are
equivalent.\smallskip

\emph{(i)} $C_{T}\left(  \Omega,k\right)  =1.$\smallskip

\emph{(ii)} For every $\omega\in W_{T}^{1,2}\left(  \Omega;\Lambda^{k}\right)
$%
\[
\widetilde{K}\left(  \omega\right)  =\left\langle K^{\nu}\left(
\nu\,\lrcorner\,\omega\right)  ;\nu\,\lrcorner\,\omega\right\rangle
\geq0,\quad\text{on }\partial\Omega.
\]

\emph{(iii)} The sharper version of Gaffney inequality holds, namely%
\[
\left\Vert \nabla\omega\right\Vert ^{2}\leq\left\Vert d\omega\right\Vert
^{2}+\left\Vert \delta\omega\right\Vert ^{2},\quad\forall\,\omega\in
W_{T}^{1,2}\left(  \Omega;\Lambda^{k}\right)  .
\]

\emph{(iv)} $\Omega$ is $\left(  n-k\right)  -$convex.\smallskip

\emph{(v)} The supremum in (\ref{definition de C T}) is not
attained.\smallskip

\emph{(vi)} $C_{T}$ is scale invariant, namely, for every $t>0$%
\[
C_{T}\left(  t\,\Omega,k\right)  =C_{T}\left(  \Omega,k\right)  .
\]

\end{theorem}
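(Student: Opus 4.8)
The plan is to reduce everything to a single integration-by-parts identity and then to exploit a scaling reformulation of $C_{T}\left(t\,\Omega,k\right)$. The linchpin is the Gaffney--G\aa rding boundary identity: for every $\omega\in W_{T}^{1,2}\left(\Omega;\Lambda^{k}\right)$,
\[
\left\Vert \nabla\omega\right\Vert ^{2}=\left\Vert d\omega\right\Vert ^{2}+\left\Vert \delta\omega\right\Vert ^{2}-\int_{\partial\Omega}\widetilde{K}\left(\omega\right),
\]
where $\widetilde{K}\left(\omega\right)=\left\langle K^{\nu}\left(\nu\,\lrcorner\,\omega\right);\nu\,\lrcorner\,\omega\right\rangle$; this should follow from the integration-by-parts formula of \cite{Csato-Dac-Kneuss(livre)} (Theorem 5.7 together with Lemma 5.5), the tangential part of the boundary integrand dropping out because $\nu\wedge\omega=0$ (for compactly supported $\omega$ the boundary integral vanishes, recovering the equality used in the proof of Proposition \ref{Proposition CT et CN plus grand que 1}). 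Given this identity, \emph{(ii)}$\Rightarrow$\emph{(iii)} is immediate by integrating the pointwise inequality, and \emph{(iii)}$\Rightarrow$\emph{(ii)} follows by a localization argument: since $\widetilde{K}\left(\omega\right)$ at a boundary point depends only on the trace $\nu\,\lrcorner\,\omega$, which can be prescribed arbitrarily by extension, and is a quadratic form in that trace, the integral inequality $\int_{\partial\Omega}\widetilde{K}\left(\omega\right)\geq0$ for all $\omega$ forces $\widetilde{K}\geq0$ pointwise (concentrate the trace near a single point).

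Next I would establish \emph{(ii)}$\Leftrightarrow$\emph{(iv)} by an explicit pointwise computation of the quadratic form $\widetilde{K}$. Fixing a boundary point and a principal frame $e_{1},\dots,e_{n-1}$ diagonalizing the second fundamental form with principal curvatures $\gamma_{1},\dots,\gamma_{n-1}$, the trace $\alpha=\nu\,\lrcorner\,\omega$ is a tangential $(k-1)$-form, and I expect $\left\langle K^{\nu}\alpha;\alpha\right\rangle$ to diagonalize as $\sum_{\left\vert I\right\vert =k-1}\bigl(\sum_{j\notin I}\gamma_{j}\bigr)\alpha_{I}^{2}$, the inner sum running over the complement of $I$ in $\left\{1,\dots,n-1\right\}$. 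Since each such complement has exactly $\left(n-1\right)-\left(k-1\right)=n-k$ elements and every $\left(n-k\right)$-subset arises in this way, nonnegativity of the form for all $\alpha$ is precisely $\gamma_{i_{1}}+\dots+\gamma_{i_{n-k}}\geq0$, i.e.\ $\left(n-k\right)$-convexity. Checking the endpoints $k=1$ (mean curvature) and $k=n-1$ (full convexity) against the definition is a useful sanity check. This curvature computation is the geometric heart of the theorem.

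For the constant, scale invariance, and attainment I would use the scaling identity
\[
C_{T}\left(t\,\Omega,k\right)=\sup_{\omega\in W_{T}^{1,2}\setminus\left\{0\right\}}\frac{\left\Vert \nabla\omega\right\Vert ^{2}}{\left\Vert d\omega\right\Vert ^{2}+\left\Vert \delta\omega\right\Vert ^{2}+t^{2}\left\Vert \omega\right\Vert ^{2}}=:f\left(t\right),
\]
obtained by pulling forms back under $x\mapsto t\,x$ and tracking the different homogeneities of the three $L^{2}$ terms. The function $f$ is non-increasing, satisfies $f\geq1$ by Proposition \ref{Proposition CT et CN plus grand que 1}, and $\left(n-k\right)$-convexity is itself scale invariant. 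If \emph{(iv)} holds, the identity yields the pointwise inequality \emph{(iii)}, so each Rayleigh quotient is $<1$ while the high-frequency forms of Proposition \ref{Proposition CT et CN plus grand que 1} push the supremum to $1$; thus $f\equiv1$, giving \emph{(i)}, \emph{(v)} (not attained) and \emph{(vi)} at once. Conversely, if \emph{(iv)} fails there is a boundary point with $\widetilde{K}<0$ for some trace; concentrating a form there at scale $\varepsilon$ makes the boundary term $-\int_{\partial\Omega}\widetilde{K}\left(\omega\right)\sim\varepsilon^{n-1}$ dominate $t^{2}\left\Vert \omega\right\Vert ^{2}\sim\varepsilon^{n}$, so $f\left(t\right)>1$ for every $t>0$, negating \emph{(i)}.

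Finally, for the attainment statement \emph{(v)} and for strict monotonicity of $f$ (hence failure of \emph{(vi)}) when \emph{(iv)} fails, I would run the direct method on a maximizing sequence for $f\left(t\right)>1$, normalized so that the denominator equals $1$. By Gaffney inequality the sequence is bounded in $W^{1,2}$, hence converges weakly in $W^{1,2}$ and strongly in $L^{2}\left(\Omega\right)$ and in $L^{2}\left(\partial\Omega\right)$. The obstacle is that the numerator $\left\Vert \nabla\omega\right\Vert ^{2}$ is only weakly lower semicontinuous, the wrong direction for a supremum; this is resolved by rewriting the quotient through the boundary identity as $1-\bigl(\left\Vert \omega\right\Vert ^{2}+\int_{\partial\Omega}\widetilde{K}\left(\omega\right)\bigr)/\bigl(\left\Vert d\omega\right\Vert ^{2}+\left\Vert \delta\omega\right\Vert ^{2}+t^{2}\left\Vert \omega\right\Vert ^{2}\bigr)$, so that the only non-strongly-convergent ingredient, $\left\Vert d\omega\right\Vert ^{2}+\left\Vert \delta\omega\right\Vert ^{2}$, now sits in the denominator with a favorable sign; its lower semicontinuity increases the ratio and forces the (nonzero) weak limit to be a maximizer. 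Evaluating $f$ at this maximizer for two scales $t_{1}<t_{2}$ then gives $f\left(t_{1}\right)>f\left(t_{2}\right)$, so $f$ is strictly decreasing and \emph{(vi)} fails. Assembling these implications closes all the equivalences; I expect the pointwise curvature computation and the attainment argument to be the two genuinely delicate steps.
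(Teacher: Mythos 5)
Your plan is correct and rests on the same two pillars as the paper's proof: the boundary integral identity (Theorem 5.7 of \cite{Csato-Dac-Kneuss(livre)}, the paper's equation (\ref{Equation 1 dans CT=1}), in which the $L^{\nu}$ term indeed drops for $\omega\in W_{T}^{1,2}$) and the diagonalization of the boundary quadratic form $\widetilde{K}$ in a principal frame, whose statement you give exactly as in Lemma \ref{lemma:L nu equal bilinear form with curvatures and dual version}. Within that common skeleton you make three choices that genuinely differ from the paper and are worth keeping. First, the reformulation $C_{T}\left(t\,\Omega,k\right)=\sup\left\Vert \nabla\omega\right\Vert ^{2}/\left(\left\Vert d\omega\right\Vert ^{2}+\left\Vert \delta\omega\right\Vert ^{2}+t^{2}\left\Vert \omega\right\Vert ^{2}\right)=f\left(t\right)$ lets you treat (i), (v) and (vi) simultaneously; the paper proves (i)$\Rightarrow$(vi) and (vi)$\Rightarrow$(i) as separate Steps 8--9 using the same scaling but never names $f$. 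Second, your direct-method argument --- rewriting the Rayleigh quotient as $1-\left(t^{2}\left\Vert \omega\right\Vert ^{2}+\int_{\partial\Omega}\widetilde{K}\left(\omega\right)\right)/\left(\left\Vert d\omega\right\Vert ^{2}+\left\Vert \delta\omega\right\Vert ^{2}+t^{2}\left\Vert \omega\right\Vert ^{2}\right)$ so that the only weakly lower semicontinuous quantity sits in the denominator with the favorable sign, the nonvanishing of the weak limit then dropping out because the numerator converges to a strictly negative limit --- is cleaner than the paper's Step 7, which expands $\left\Vert \nabla\left(\omega_{s}-\omega\right)\right\Vert ^{2}$ and combines two inequalities; note only that the numerator must carry $t^{2}\left\Vert \omega\right\Vert ^{2}$ rather than $\left\Vert \omega\right\Vert ^{2}$. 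Third, your concentration argument for $\lnot$(iv)$\Rightarrow\lnot$(i) (boundary term of order $\varepsilon^{n-1}$ beating the volume term of order $\varepsilon^{n}$) replaces the paper's route (i)$\Rightarrow$(ii)$\Rightarrow$(iv), which uses a cutoff supported in an $\epsilon$-neighborhood of $\partial\Omega$ followed by multiplication by $\psi^{2}$; both are localizations, but yours bypasses the intermediate pointwise statement (ii). The one place where your write-up is only an assertion is the identity $\left\langle K^{\nu}\alpha;\alpha\right\rangle=\sum_{\left\vert I\right\vert =k-1}\bigl(\sum_{j\notin I}\gamma_{j}\bigr)\alpha_{I}^{2}$, and in particular the vanishing of the cross terms $\left\langle K^{\nu}\left(E_{I}\right);E_{J}\right\rangle$ for $I\neq J$: this is exactly where the paper spends most of its effort (Lemmas \ref{Lemme 0 algebrique}, \ref{Lemme 1 algebrique}, \ref{Lemme dans CT=1} and the symmetry Lemma \ref{lemma:L nu and K nu symmetric}, the last resting on the symmetry of the second fundamental form). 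The formula you state is the correct one, but proving it is the bulk of the remaining work rather than a routine verification, so your proposal should be read as a correct architecture with that computation still owed.
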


\begin{remark}
\label{Remarque apres Thm equiv pour CT}\emph{(i)} Using the Hodge $\ast$
operation, we obtain immediately, from the theorem, the following equivalent
relations.\smallskip

- $C_{N}\left(  \Omega,k\right)  =1.$\smallskip

- $\widetilde{L}\left(  \omega\right)  =\left\langle L^{\nu}\left(  \nu
\wedge\omega\right)  ;\nu\wedge\omega\right\rangle \geq0,$\quad whenever
$\nu\,\lrcorner\,\omega=0$ on $\partial\Omega.$\smallskip

- The sharper version of Gaffney inequality holds, namely%
\[
\left\Vert \nabla\omega\right\Vert ^{2}\leq\left\Vert d\omega\right\Vert
^{2}+\left\Vert \delta\omega\right\Vert ^{2},\quad\forall\,\omega\in
W_{N}^{1,2}\left(  \Omega;\Lambda^{k}\right)  .
\]

- $\Omega$ is $k-$convex.\smallskip

- The supremum in (\ref{definition de C N}) is not attained.\smallskip

- $C_{N}$ is scale invariant.\smallskip

\emph{(ii)} The condition (ii) of the theorem can be equivalently rewritten
(for any $\omega\in W_{T}^{1,2}\left(  \Omega;\Lambda^{k}\right)  $) as%
\[
\widetilde{K}\left(  \omega\right)  =\left\langle K^{\nu}\left(
\omega\right)  ;\omega\right\rangle \geq0,\quad\text{on }\partial\Omega
\]
since, recalling that $\nu\wedge\omega=0$ (since $\omega\in W_{T}^{1,2}$),%
\begin{align*}
\widetilde{K}\left(  \omega\right)   &  =\left\langle K^{\nu}\left(
\nu\,\lrcorner\,\omega\right)  ;\nu\,\lrcorner\,\omega\right\rangle
=\left\langle \nu\,\lrcorner\,K^{\nu}\left(  \omega\right)  ;\nu
\,\lrcorner\,\omega\right\rangle =\left\langle K^{\nu}\left(  \omega\right)
;\nu\wedge\left(  \nu\,\lrcorner\,\omega\right)  \right\rangle \smallskip\\
&  =\left\langle K^{\nu}\left(  \omega\right)  ;\omega-\nu\,\lrcorner\,\left(
\nu\wedge\omega\right)  \right\rangle =\left\langle K^{\nu}\left(
\omega\right)  ;\omega\right\rangle .
\end{align*}
Similar remarks hold for $\widetilde{L}.$\smallskip

\emph{(iii)} Note that if $C_{T}\left(  \Omega,k\right)  =1,$ then
$\mathcal{H}_{T}\left(  \Omega;\Lambda^{k}\right)  =\left\{  0\right\}  .$
This follows at once from (iii) of the theorem, since non-zero constant forms
cannot satisfy the boundary condition. A similar remark applies to
$\mathcal{H}_{N}\,.$
\end{remark}

\subsection{Some algebraic results}

\begin{lemma}
\label{Lemme 0 algebrique}Let $1\leq k\leq n-1$ and $\lambda_{1}%
,\cdots,\lambda_{k}\in\Lambda^{1}$ with%
\[
\lambda_{i}\,\lrcorner\,\lambda_{j}=0,\quad\text{if }i\neq j.
\]
Then%
\[
\lambda_{i}\,\lrcorner\,\left(  \lambda_{1}\wedge\cdots\wedge\lambda
_{i-1}\wedge\lambda_{i+1}\wedge\cdots\wedge\lambda_{k}\right)  =0,\quad
i=1,\cdots,k
\]%
\[
\left\vert \lambda_{1}\wedge\cdots\wedge\lambda_{k}\right\vert =\left\vert
\lambda_{1}\right\vert \cdots\left\vert \lambda_{k}\right\vert .
\]
Furthermore let%
\[
C^{jl}=\sum_{s_{2},\cdots,s_{k}=1}^{n}\left(  \lambda_{1}\wedge\cdots
\wedge\lambda_{k}\right)  ^{js_{2}\cdots s_{k}}\left(  \lambda_{1}\wedge
\cdots\wedge\lambda_{k}\right)  ^{ls_{2}\cdots s_{k}}.
\]
and $\widehat{\lambda_{j}}=\lambda_{1}\wedge\cdots\lambda_{j-1}\wedge
\lambda_{j+1}\wedge\cdots\wedge\lambda_{k}\,,$ then%
\[
C^{jl}=\left(  \left(  k-1\right)  !\right)  \sum_{\gamma=1}^{k}\left\vert
\widehat{\lambda_{\gamma}}\right\vert ^{2}\lambda_{\gamma}^{j}\lambda_{\gamma
}^{l}\,.
\]

\end{lemma}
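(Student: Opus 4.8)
The three identities can all be deduced from two ingredients already recorded in the excerpt: the antiderivation rule for the interior product with a $1$-form, and the two identities in \eqref{equation:decomposing omega by nu}. Throughout I write $\mu\,\lrcorner\,\sigma=\langle\mu;\sigma\rangle$ for $1$-forms (Notation (iv)) and use
\[\mu\,\lrcorner\,(\sigma_1\wedge\cdots\wedge\sigma_m)=\sum_{\gamma=1}^m(-1)^{\gamma-1}(\mu\,\lrcorner\,\sigma_\gamma)\,\sigma_1\wedge\cdots\wedge\sigma_{\gamma-1}\wedge\sigma_{\gamma+1}\wedge\cdots\wedge\sigma_m.\]
The first identity is then immediate: applying this rule to $\lambda_i\,\lrcorner\,\widehat{\lambda_i}$, every resulting term carries a factor $\lambda_i\,\lrcorner\,\lambda_\gamma=\langle\lambda_i;\lambda_\gamma\rangle$ with $\gamma\neq i$, which vanishes by hypothesis.

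For the norm identity I would argue by induction on $k$, writing $\beta=\lambda_2\wedge\cdots\wedge\lambda_k$ so that $\lambda_1\wedge\cdots\wedge\lambda_k=\lambda_1\wedge\beta$. By the first identity of the lemma $\lambda_1\,\lrcorner\,\beta=0$, so the first relation in \eqref{equation:decomposing omega by nu} (with $\nu=\lambda_1$ and $\omega=\beta$) reduces to $\lambda_1\,\lrcorner\,(\lambda_1\wedge\beta)=|\lambda_1|^2\beta$. Combining this with the adjointness relation in \eqref{equation:decomposing omega by nu},
\[|\lambda_1\wedge\beta|^2=\langle\lambda_1\wedge\beta;\lambda_1\wedge\beta\rangle=\langle\beta;\lambda_1\,\lrcorner\,(\lambda_1\wedge\beta)\rangle=|\lambda_1|^2|\beta|^2,\]
and the induction hypothesis applied to $\beta$ yields $|\lambda_1\wedge\cdots\wedge\lambda_k|=|\lambda_1|\cdots|\lambda_k|$.

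The third identity is the main point. Setting $\alpha=\lambda_1\wedge\cdots\wedge\lambda_k$ and using the coordinate formula for the interior product (Notation (iii)) with $\nu=dx^j$, one has $\alpha^{js_2\cdots s_k}=(dx^j\,\lrcorner\,\alpha)^{s_2\cdots s_k}$, whence, by the scalar product convention (iv),
\[C^{jl}=\sum_{s_2,\cdots,s_k=1}^n(dx^j\,\lrcorner\,\alpha)^{s_2\cdots s_k}(dx^l\,\lrcorner\,\alpha)^{s_2\cdots s_k}=(k-1)!\,\langle dx^j\,\lrcorner\,\alpha;dx^l\,\lrcorner\,\alpha\rangle.\]
Expanding by the antiderivation rule, with $dx^j\,\lrcorner\,\lambda_\gamma=\lambda_\gamma^j$, gives $dx^j\,\lrcorner\,\alpha=\sum_{\gamma=1}^k(-1)^{\gamma-1}\lambda_\gamma^j\,\widehat{\lambda_\gamma}$, so the scalar product becomes a double sum over $\gamma,\mu$ with generic term $(-1)^{\gamma+\mu}\lambda_\gamma^j\lambda_\mu^l\,\langle\widehat{\lambda_\gamma};\widehat{\lambda_\mu}\rangle$.

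The step I expect to be the main obstacle is to show that the cross terms vanish, i.e. $\langle\widehat{\lambda_\gamma};\widehat{\lambda_\mu}\rangle=0$ whenever $\gamma\neq\mu$. Since $\gamma\neq\mu$, the form $\lambda_\gamma$ is one of the factors of $\widehat{\lambda_\mu}$; pulling it to the front I may write $\widehat{\lambda_\mu}=\pm\,\lambda_\gamma\wedge\rho$, where $\rho$ is the wedge of the remaining factors (those with index different from both $\gamma$ and $\mu$). The adjointness relation in \eqref{equation:decomposing omega by nu} then yields
\[\langle\widehat{\lambda_\gamma};\widehat{\lambda_\mu}\rangle=\pm\langle\widehat{\lambda_\gamma};\lambda_\gamma\wedge\rho\rangle=\pm\langle\lambda_\gamma\,\lrcorner\,\widehat{\lambda_\gamma};\rho\rangle=0,\]
the last equality being precisely the first identity of the lemma. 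Hence only the diagonal terms $\gamma=\mu$ survive; each contributes $\lambda_\gamma^j\lambda_\gamma^l|\widehat{\lambda_\gamma}|^2$ (the sign squares to $1$), and multiplying by $(k-1)!$ produces exactly $C^{jl}=((k-1)!)\sum_{\gamma=1}^k|\widehat{\lambda_\gamma}|^2\lambda_\gamma^j\lambda_\gamma^l$, as claimed.
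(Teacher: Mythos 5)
Your proposal is correct and follows essentially the same route as the paper: both rest on the antiderivation property of the interior product, the adjunction $\left\langle \nu\wedge\alpha;\beta\right\rangle =\left\langle \alpha;\nu\,\lrcorner\,\beta\right\rangle$, and the expansion $\left(  \lambda_{1}\wedge\cdots\wedge\lambda_{k}\right)  ^{js_{2}\cdots s_{k}}=\sum_{\gamma}\left(  -1\right)  ^{\gamma+1}\lambda_{\gamma}^{j}(\widehat{\lambda_{\gamma}})^{s_{2}\cdots s_{k}}$, with only cosmetic differences (you use the fully expanded Leibniz rule where the paper inducts on the two-factor version, and you prove the norm identity via adjointness rather than the Lagrange-type identity $\left\vert \nu\right\vert ^{2}\left\vert \omega\right\vert ^{2}=\left\vert \nu\wedge\omega\right\vert ^{2}+\left\vert \nu\,\lrcorner\,\omega\right\vert ^{2}$). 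A small merit of your write-up is that you spell out why $\left\langle \widehat{\lambda_{\gamma}};\widehat{\lambda_{\mu}}\right\rangle =0$ for $\gamma\neq\mu$, a point the paper only attributes to its Steps 1 and 2 without detail.
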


\begin{proof}
\emph{Step 1.} We first establish by induction that%
\[
\lambda_{k}\,\lrcorner\,\left(  \lambda_{1}\wedge\cdots\wedge\lambda
_{k-1}\right)  =0
\]
(and similarly for all the other $\lambda_{i}$). Indeed if $k=2,$ this is our
hypothesis; so assume that the result has been proved for $k$ and let us prove
it for $k+1.$ We know from Proposition 2.16 in \cite{Csato-Dac-Kneuss(livre)}
that%
\[
\lambda_{k+1}\,\lrcorner\,\left(  \lambda_{1}\wedge\cdots\wedge\lambda
_{k}\right)  =\left(  \lambda_{k+1}\,\lrcorner\,\lambda_{1}\right)
\wedge\left(  \lambda_{2}\wedge\cdots\wedge\lambda_{k}\right)  -\lambda
_{1}\wedge\left(  \lambda_{k+1}\,\lrcorner\,\left(  \lambda_{2}\wedge
\cdots\wedge\lambda_{k}\right)  \right)
\]
applying the hypothesis of induction we have the result.\smallskip

\emph{Step 2.} We also proceed by induction and show that%
\[
\left\vert \lambda_{1}\wedge\cdots\wedge\lambda_{k}\right\vert =\left\vert
\lambda_{1}\right\vert \cdots\left\vert \lambda_{k}\right\vert .
\]
When $k=2$ we have still from Proposition 2.16 in
\cite{Csato-Dac-Kneuss(livre)} that%
\[
\left\vert \lambda_{1}\wedge\lambda_{2}\right\vert ^{2}=\left\vert \lambda
_{1}\right\vert ^{2}\left\vert \lambda_{2}\right\vert ^{2}-\left\vert
\lambda_{1}\,\lrcorner\,\lambda_{2}\right\vert ^{2}=\left\vert \lambda
_{1}\right\vert ^{2}\left\vert \lambda_{2}\right\vert ^{2}%
\]
as wished. So let us assume that the result has been proved for $k$ and let us
establish it for $k+1.$ By the very same proposition as above we get%
\begin{align*}
\left\vert \lambda_{1}\right\vert ^{2}\left\vert \lambda_{1}\wedge\lambda
_{2}\wedge\cdots\wedge\lambda_{k}\wedge\lambda_{k+1}\right\vert ^{2}  &
=\left\vert \lambda_{1}\wedge\lambda_{1}\wedge\lambda_{2}\wedge\cdots
\wedge\lambda_{k+1}\right\vert ^{2}+\left\vert \lambda_{1}\,\lrcorner\,\left(
\lambda_{1}\wedge\cdots\wedge\lambda_{k+1}\right)  \right\vert ^{2}%
\smallskip\\
&  =\left\vert \lambda_{1}\,\lrcorner\,\left(  \lambda_{1}\wedge\cdots
\wedge\lambda_{k+1}\right)  \right\vert ^{2}.
\end{align*}
Moreover since%
\[
\lambda_{1}\,\lrcorner\,\left(  \lambda_{1}\wedge\cdots\wedge\lambda
_{k+1}\right)  =\left(  \lambda_{1}\,\lrcorner\,\lambda_{1}\right)
\wedge\left(  \lambda_{2}\wedge\cdots\wedge\lambda_{k+1}\right)  -\lambda
_{1}\wedge\left(  \lambda_{1}\,\lrcorner\,\left(  \lambda_{2}\wedge
\cdots\wedge\lambda_{k+1}\right)  \right)
\]
using Step 1 and the hypothesis of induction, we infer that%
\[
\left\vert \lambda_{1}\,\lrcorner\,\left(  \lambda_{1}\wedge\cdots
\wedge\lambda_{k+1}\right)  \right\vert ^{2}=\left\vert \lambda_{1}\right\vert
^{4}\left\vert \lambda_{2}\wedge\cdots\wedge\lambda_{k+1}\right\vert
^{2}=\left\vert \lambda_{1}\right\vert ^{4}\left\vert \lambda_{2}\right\vert
^{2}\cdots\left\vert \lambda_{k+1}\right\vert ^{2}.
\]
Combining the results we have indeed proved our claim.\smallskip

\emph{Step 3.} Writing%
\[
\left(  \lambda_{1}\wedge\cdots\wedge\lambda_{k}\right)  ^{js_{2}\cdots s_{k}%
}=\det\left[  \left(  \lambda_{r}^{s}\right)  _{r=1,\cdots,k}^{s=j,s_{2}%
,\cdots,s_{k}}\right]
\]
we find that%
\begin{align*}
\left(  \lambda_{1}\wedge\cdots\wedge\lambda_{k}\right)  ^{js_{2}\cdots
s_{k}}  &  =\sum_{\gamma=1}^{k}\left(  -1\right)  ^{\gamma+1}\lambda_{\gamma
}^{j}\left(  \lambda_{1}\wedge\cdots\lambda_{\gamma-1}\wedge\lambda_{\gamma
+1}\wedge\cdots\wedge\lambda_{k}\right)  ^{s_{2}\cdots s_{k}}\smallskip\\
&  =\sum_{\gamma=1}^{k}\left(  -1\right)  ^{\gamma+1}\lambda_{\gamma}%
^{j}\left(  \widehat{\lambda_{\gamma}}\right)  ^{s_{2}\cdots s_{k}}%
\end{align*}%
\begin{align*}
\left(  \lambda_{1}\wedge\cdots\wedge\lambda_{k}\right)  ^{ls_{2}\cdots
s_{k}}  &  =\sum_{\delta=1}^{k}\left(  -1\right)  ^{\delta+1}\lambda_{\delta
}^{l}\left(  \lambda_{1}\wedge\cdots\lambda_{\delta-1}\wedge\lambda_{\delta
+1}\wedge\cdots\wedge\lambda_{k}\right)  ^{s_{2}\cdots s_{k}}\smallskip\\
&  =\sum_{\delta=1}^{k}\left(  -1\right)  ^{\delta+1}\lambda_{\delta}%
^{l}\left(  \widehat{\lambda_{\delta}}\right)  ^{s_{2}\cdots s_{k}}.
\end{align*}
Observing (using Steps 1 and 2) that%
\[
\sum_{s_{2},\cdots,s_{k}=1}^{n}\left(  \widehat{\lambda_{\gamma}}\right)
^{s_{2}\cdots s_{k}}\left(  \widehat{\lambda_{\delta}}\right)  ^{s_{2}\cdots
s_{k}}=\left(  \left(  k-1\right)  !\right)  \left\langle \widehat{\lambda
_{\gamma}};\widehat{\lambda_{\delta}}\right\rangle =\left\{
\begin{array}
[c]{cl}%
\left(  k-1\right)  !\,\left\vert \widehat{\lambda_{\gamma}}\right\vert ^{2} &
\text{if }\gamma=\delta\smallskip\\
0 & \text{if }\gamma\neq\delta
\end{array}
\right.
\]
we find that%
\[
C^{jl}=\left(  \left(  k-1\right)  !\right)  \sum_{\gamma=1}^{k}\left\vert
\widehat{\lambda_{\gamma}}\right\vert ^{2}\lambda_{\gamma}^{j}\lambda_{\gamma
}^{l}\,.
\]
as claimed.\smallskip
\end{proof}

We next give a way of computing the quantity $K^{\nu}.$

\begin{lemma}
\label{Lemme 1 algebrique}Let $1\leq k\leq n,$ $\Sigma\subset\mathbb{R}^{n}$
be a smooth $\left(  n-1\right)  -$surface with unit normal $\nu$ and
$\Omega\subset\mathbb{R}^{n}$ be a neighborhood of $\Sigma.$ Let $\alpha
,\beta\in C^{1}\left(  \Omega;\Lambda^{k}\right)  $ be such that, on $\Sigma,$%
\[
\nu\wedge\alpha=\nu\wedge\beta=0.
\]
Then the following equation holds true, on $\Sigma,$%
\begin{equation}
\left\langle K^{\nu}\left(  \nu\,\lrcorner\,\alpha\right)  ;\nu\,\lrcorner
\,\beta\right\rangle +\left\langle K^{\nu}\left(  \nu\,\lrcorner
\,\beta\right)  ;\nu\,\lrcorner\,\alpha\right\rangle =\left\langle
\delta\alpha;\nu\,\lrcorner\,\beta\right\rangle +\left\langle \delta\beta
;\nu\,\lrcorner\,\alpha\right\rangle -\left\langle \nabla\left(
\alpha\,\lrcorner\,\beta\right)  ;\nu\right\rangle .
\label{Equation 1 dans Lemme 1 algebrique}%
\end{equation}
In particular if $\alpha=\nu\wedge\lambda$ in $\Omega$ with%
\[
\lambda=\lambda_{1}\wedge\cdots\wedge\lambda_{k-1}%
\]
where $\lambda_{1},\cdots,\lambda_{k-1}\in C^{1}\left(  \Omega;\Lambda
^{1}\right)  $ with, for every $i,j=1,\cdots,k-1$ and for every $x\in\Omega,$%
\[
\left\vert \nu\left(  x\right)  \right\vert =1,\quad\nu\left(  x\right)
\,\lrcorner\,\lambda_{i}\left(  x\right)  =0\quad\text{and}\quad\lambda
_{i}\left(  x\right)  \,\lrcorner\,\lambda_{j}\left(  x\right)  =\delta
_{ij}\,,
\]
then, in $\Omega,$%
\[
\left\langle K^{\nu}(\lambda);\lambda\right\rangle =\left\langle \delta\left(
\nu\wedge\lambda\right)  ;\lambda\right\rangle .
\]

\end{lemma}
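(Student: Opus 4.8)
The plan is to reduce everything to a single pointwise Leibniz rule for the operator $\delta(\nu\wedge\,\cdot\,)$ and then to exploit the boundary condition $\nu\wedge\alpha=\nu\wedge\beta=0$. First I would record the product rule
\[
\delta(\nu\wedge\alpha)=K^{\nu}(\alpha)+\partial_{\nu}\alpha-\nu\wedge\delta\alpha\qquad\text{in }\Omega,
\]
valid for every $\alpha\in C^{1}(\Omega;\Lambda^{k})$, where $\partial_{\nu}\alpha:=\sum_{j=1}^{n}\nu^{j}\alpha_{x_{j}}$ denotes the componentwise derivative of $\alpha$ in the direction $\nu$. This follows by writing, from Notations (ii) and (vii),
\[
(\delta(\nu\wedge\alpha))^{i_{1}\cdots i_{k}}=\sum_{j}\partial_{x_{j}}(\nu\wedge\alpha)^{ji_{1}\cdots i_{k}}
\]
and applying Leibniz: the terms in which the derivative falls on $\nu$ reassemble, by Definition \ref{Definition de L et nu} of $K^{\nu}$, into $K^{\nu}(\alpha)$, while the terms in which it falls on the coefficients of $\alpha$ produce exactly $\partial_{\nu}\alpha-\nu\wedge\delta\alpha$.

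To prove (\ref{Equation 1 dans Lemme 1 algebrique}) I would express both sides through $\langle K^{\nu}(\alpha);\beta\rangle$ and $\langle K^{\nu}(\beta);\alpha\rangle$. Using the adjunction $\langle\delta\alpha;\nu\,\lrcorner\,\beta\rangle=\langle\nu\wedge\delta\alpha;\beta\rangle$ from (\ref{equation:decomposing omega by nu}), the identity $\langle\nabla(\alpha\,\lrcorner\,\beta);\nu\rangle=\partial_{\nu}\langle\alpha;\beta\rangle=\langle\partial_{\nu}\alpha;\beta\rangle+\langle\alpha;\partial_{\nu}\beta\rangle$, and the product rule above, the right-hand side of (\ref{Equation 1 dans Lemme 1 algebrique}) becomes
\[
\langle K^{\nu}(\alpha);\beta\rangle+\langle K^{\nu}(\beta);\alpha\rangle-\langle\delta(\nu\wedge\alpha);\beta\rangle-\langle\delta(\nu\wedge\beta);\alpha\rangle,
\]
the two $\partial_{\nu}$ contributions cancelling. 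For the left-hand side I would use Remark \ref{Remarque: permutation de nu avec L et K}(i), i.e. $K^{\nu}(\nu\,\lrcorner\,\alpha)=\nu\,\lrcorner\,K^{\nu}(\alpha)$, together with the adjunction again, to get $\langle K^{\nu}(\nu\,\lrcorner\,\alpha);\nu\,\lrcorner\,\beta\rangle=\langle K^{\nu}(\alpha);\nu\wedge(\nu\,\lrcorner\,\beta)\rangle$; since $\nu\wedge\beta=0$ and $|\nu|=1$, (\ref{equation:decomposing omega by nu}) gives $\nu\wedge(\nu\,\lrcorner\,\beta)=\beta$ on $\Sigma$, so the left-hand side equals $\langle K^{\nu}(\alpha);\beta\rangle+\langle K^{\nu}(\beta);\alpha\rangle$ on $\Sigma$. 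Thus (\ref{Equation 1 dans Lemme 1 algebrique}) reduces to the vanishing on $\Sigma$ of $\langle\delta(\nu\wedge\alpha);\beta\rangle+\langle\delta(\nu\wedge\beta);\alpha\rangle$.

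The heart of the matter, and the step I expect to be the main obstacle, is to show that in fact each of these terms vanishes separately, namely that $\nu\,\lrcorner\,\delta(\nu\wedge\alpha)=0$ on $\Sigma$ whenever $\nu\wedge\alpha=0$ on $\Sigma$; combined with $\beta=\nu\wedge(\nu\,\lrcorner\,\beta)$ this yields $\langle\delta(\nu\wedge\alpha);\beta\rangle=\langle\nu\,\lrcorner\,\delta(\nu\wedge\alpha);\nu\,\lrcorner\,\beta\rangle=0$. To prove the claim I set $f=\nu\wedge\alpha$, so that $f=0$ on $\Sigma$, and compute componentwise
\[
(\nu\,\lrcorner\,\delta f)^{J}=\sum_{i,l}\nu^{i}\,\partial_{x_{l}}f^{liJ}.
\]
Decomposing $\partial_{x_{l}}=\nu^{l}\partial_{\nu}+T_{l}$ with $T_{l}=\partial_{x_{l}}-\nu^{l}\partial_{\nu}$ tangent to $\Sigma$ (indeed the vector with components $\delta_{lm}-\nu^{l}\nu^{m}$ is orthogonal to $\nu$ since $|\nu|=1$), the tangential contribution $T_{l}f^{liJ}$ vanishes on $\Sigma$ because $f^{liJ}=0$ there; one is left with $\sum_{i,l}\nu^{i}\nu^{l}\,\partial_{\nu}f^{liJ}$, which is zero because $f^{liJ}$ is antisymmetric in $(l,i)$ whereas $\nu^{i}\nu^{l}$ is symmetric. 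This is precisely where both the boundary condition and the unit-normal extension are used.

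Finally, for the special case I would apply the product rule to the $(k-1)$-form $\lambda$ and pair it with $\lambda$:
\[
\langle\delta(\nu\wedge\lambda);\lambda\rangle=\langle K^{\nu}(\lambda);\lambda\rangle+\langle\partial_{\nu}\lambda;\lambda\rangle-\langle\nu\wedge\delta\lambda;\lambda\rangle.
\]
Both of the last two terms vanish throughout $\Omega$. Indeed, since $\nu\,\lrcorner\,\lambda_{i}=0$ for every $i$, the Leibniz rule for the interior product gives $\nu\,\lrcorner\,\lambda=0$, so that $\langle\nu\wedge\delta\lambda;\lambda\rangle=\langle\delta\lambda;\nu\,\lrcorner\,\lambda\rangle=0$; and since the $\lambda_{i}$ are orthonormal, Lemma \ref{Lemme 0 algebrique} gives $|\lambda|=|\lambda_{1}|\cdots|\lambda_{k-1}|\equiv1$, whence $\langle\partial_{\nu}\lambda;\lambda\rangle=\tfrac{1}{2}\partial_{\nu}|\lambda|^{2}=0$. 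This yields $\langle K^{\nu}(\lambda);\lambda\rangle=\langle\delta(\nu\wedge\lambda);\lambda\rangle$, as claimed.
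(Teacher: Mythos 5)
Your proposal is correct, but it follows a genuinely different and more self-contained route than the paper. The paper disposes of identity (\ref{Equation 1 dans Lemme 1 algebrique}) in two lines by citing Lemma 5.6 of \cite{Csato-Dac-Kneuss(livre)}, which directly provides $\left\langle \nu\,\lrcorner\,K^{\nu}\left(\alpha\right);\nu\,\lrcorner\,\beta\right\rangle =\left\langle \delta\alpha;\nu\,\lrcorner\,\beta\right\rangle -\sum_{I}\left\langle \nabla\alpha_{I};\nu\right\rangle \beta_{I}$, and then symmetrizes in $\alpha$ and $\beta$. You instead re-derive the content of that external lemma from scratch: your pointwise Leibniz rule $\delta(\nu\wedge\alpha)=K^{\nu}(\alpha)+\partial_{\nu}\alpha-\nu\wedge\delta\alpha$ is correct (the terms where the derivative falls on $\nu$ are exactly $K^{\nu}(\alpha)$ by Definition \ref{Definition de L et nu}), the reduction of both sides via the adjunction in (\ref{equation:decomposing omega by nu}) is clean, and your key boundary fact --- that $\nu\,\lrcorner\,\delta(\nu\wedge\alpha)=0$ on $\Sigma$ whenever $\nu\wedge\alpha=0$ on $\Sigma$, proved by splitting $\partial_{x_{l}}$ into a tangential part (which kills a quantity vanishing on $\Sigma$) and a normal part (which dies by the antisymmetry of $f^{liJ}$ against the symmetry of $\nu^{i}\nu^{l}$) --- is exactly the hidden content of the cited Lemma 5.6, and your proof of it is valid since $|\nu|=1$ in a neighborhood (Remark \ref{Remarque: permutation de nu avec L et K} (ii)). What your approach buys is independence from the reference; what it costs is length. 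For the special case your treatment is arguably cleaner than the paper's: the paper routes through Step 1 with $\beta=\alpha$ (whose conclusion is a priori only on $\Sigma$), whereas your direct pairing of the product rule with $\lambda$, using $\nu\,\lrcorner\,\lambda=0$ and $|\lambda|\equiv 1$ from Lemma \ref{Lemme 0 algebrique}, immediately yields the identity in all of $\Omega$ as the statement requires.
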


\begin{proof}
\emph{Step 1.} Applying Lemma 5.6 in \cite{Csato-Dac-Kneuss(livre)} we find%
\[
\left\langle K^{\nu}\left(  \nu\,\lrcorner\,\alpha\right)  ;\nu\,\lrcorner
\,\beta\right\rangle =\left\langle \nu\,\lrcorner\,K^{\nu}\left(
\alpha\right)  ;\nu\,\lrcorner\,\beta\right\rangle =\left\langle \delta
\alpha;\nu\,\lrcorner\,\beta\right\rangle -\sum_{I}\left\langle \nabla
\alpha_{I};\nu\right\rangle \beta_{I}%
\]
where $I=\left(  i_{1},\cdots,i_{k}\right)  \in\mathbb{N}^{k}$ with $1\leq
i_{1}<\cdots<i_{k}\leq n.$ We thus deduce that%
\[
\left\langle \nu\,\lrcorner\,K^{\nu}\left(  \alpha\right)  ;\nu\,\lrcorner
\,\beta\right\rangle +\left\langle \nu\,\lrcorner\,K^{\nu}\left(
\beta\right)  ;\nu\,\lrcorner\,\alpha\right\rangle =\left\langle \delta
\alpha;\nu\,\lrcorner\,\beta\right\rangle +\left\langle \delta\beta
;\nu\,\lrcorner\,\alpha\right\rangle -\sum_{I}\left\langle \nabla\left(
\alpha_{I}\beta_{I}\right)  ;\nu\right\rangle
\]
or in other words%
\[
\left\langle \nu\,\lrcorner\,K^{\nu}\left(  \alpha\right)  ;\nu\,\lrcorner
\,\beta\right\rangle +\left\langle \nu\,\lrcorner\,K^{\nu}\left(
\beta\right)  ;\nu\,\lrcorner\,\alpha\right\rangle =\left\langle \delta
\alpha;\nu\,\lrcorner\,\beta\right\rangle +\left\langle \delta\beta
;\nu\,\lrcorner\,\alpha\right\rangle -\left\langle \nabla\left(
\alpha\,\lrcorner\,\beta\right)  ;\nu\right\rangle
\]
which is exactly (\ref{Equation 1 dans Lemme 1 algebrique}).\smallskip

\emph{Step 2.} The extra statement follows from Step 1 applied to
$\beta=\alpha$ and Lemma \ref{Lemme 0 algebrique} since then $\nu
\,\lrcorner\,\alpha=\lambda$ and%
\[
\left\langle \nabla\left(  \alpha\,\lrcorner\,\alpha\right)  ;\nu\right\rangle
=\left\langle \nabla\left(  \left\vert \lambda\right\vert ^{2}\right)
;\nu\right\rangle =0.
\]
The proof is therefore complete.\smallskip
\end{proof}

\subsection{Calculation of sums of principal curvatures}

In the sequel $\Omega\subset\mathbb{R}^{n}$ will always be a bounded open
smooth set with exterior unit normal $\nu.$ When we say that $E_{1}%
,\cdots,E_{n-1}$ is an \emph{orthonormal frame field of principal directions}
of $\partial\Omega$ with \emph{associated principal curvatures} $\gamma
_{1},\cdots,\gamma_{n-1}\,,$ we mean that $\left\{  \nu,E_{1},\cdots
,E_{n-1}\right\}  $ form an orthonormal basis of $\mathbb{R}^{n}$ and, for
every $1\leq i\leq n-1,$%
\begin{equation}
\sum_{j=1}^{n}\left(  E_{i}^{j}\nu_{x_{j}}^{l}\right)  =\gamma_{i}\,E_{i}%
^{l}\quad\Rightarrow\quad\gamma_{i}=\sum_{j,l=1}^{n}\left(  E_{i}^{l}E_{i}%
^{j}\nu_{x_{j}}^{l}\right)  . \label{Equation courbures}%
\end{equation}

\begin{lemma}
\label{Lemme dans CT=1}Let $E_{1},\cdots,E_{n-1}$ be an orthonormal frame
field of principal directions of $\partial\Omega$ with associated principal
curvatures $\gamma_{1},\cdots,\gamma_{n-1}\,.$ Assume that $E_{1}%
,\cdots,E_{n-1}$ are extended locally to a neighborhood of $\partial\Omega$
such that $\left\{  \nu,E_{1},\cdots,E_{n-1}\right\}  $ form an orthonormal
frame field of $\mathbb{R}^{n}.$ Let $1\leq k\leq n-1,$%
\[
\lambda=E_{i_{1}\cdots i_{k-1}}=E_{i_{1}}\wedge\cdots\wedge E_{i_{k-1}}%
\quad\text{and}\quad\mu=E_{j_{1}\cdots j_{k-1}}=E_{j_{1}}\wedge\cdots\wedge
E_{j_{k-1}}%
\]
(for $k=1,$ $\lambda=\mu=1$). Then%
\[
\left\langle \delta\left(  \nu\wedge\lambda\right)  ;\lambda\right\rangle
=\left(  \gamma_{1}+\cdots+\gamma_{n-1}\right)  -\left(  \gamma_{i_{1}}%
+\cdots+\gamma_{i_{k-1}}\right)
\]
while for $\lambda\neq\mu$%
\[
\left\langle \delta\left(  \nu\wedge\lambda\right)  ;\mu\right\rangle
+\left\langle \delta\left(  \nu\wedge\mu\right)  ;\lambda\right\rangle =0.
\]

\end{lemma}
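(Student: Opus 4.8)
The plan is to reduce everything to a moving-frame computation of the codifferential. First I would record that, since the trace $\sum_{a}E_{a}\,\lrcorner\,\nabla_{E_{a}}$ is frame independent, the coordinate formula for $\delta$ in (vii) can be rewritten, for any $C^{1}$ form $\eta$, as $\delta\eta=\sum_{a=0}^{n-1}E_{a}\,\lrcorner\,\nabla_{E_{a}}\eta$, where I abbreviate $E_{0}=\nu$ and $\nabla_{E_{a}}$ denotes the directional derivative of the coefficients. Writing $\nabla_{E_{a}}E_{b}=\sum_{c}\Gamma_{ab}^{c}\,E_{c}$ with $\Gamma_{ab}^{c}=\langle\nabla_{E_{a}}E_{b};E_{c}\rangle$, orthonormality of $\{E_{0},\dots,E_{n-1}\}$ gives the antisymmetry $\Gamma_{ab}^{c}=-\Gamma_{ac}^{b}$; moreover the principal-curvature relation (\ref{Equation courbures}) reads $\nabla_{E_{i}}\nu=\gamma_{i}E_{i}$ on $\partial\Omega$, so that $\Gamma_{i0}^{a}=\gamma_{i}\delta_{ia}$ and $\Gamma_{ia}^{0}=-\gamma_{i}\delta_{ia}$ for $1\le i\le n-1$.

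Next I would apply the Leibniz rule to $\eta=\nu\wedge\lambda=E_{0}\wedge E_{i_{1}}\wedge\cdots\wedge E_{i_{k-1}}$, obtaining $\delta(\nu\wedge\lambda)$ as a sum, over the differentiated factor $E_{p_{r}}$ and the contraction index $a$, of terms $\Gamma_{a,p_{r}}^{c}\,\big(E_{a}\,\lrcorner\,(E_{p_{0}}\wedge\cdots\wedge E_{c}\wedge\cdots\wedge E_{p_{k-1}})\big)$. Taking the scalar product with $\mu=E_{j_{1}}\wedge\cdots\wedge E_{j_{k-1}}$ (index set $J$, with $I=\{i_{1},\dots,i_{k-1}\}$) and using orthonormality of $\{E_{a}\}$, a short bookkeeping of index sets shows that only two mechanisms produce the coefficient of $E_{J}$: (a) one differentiates the factor $E_{0}=\nu$ and contracts the newly created index, which forces $J=I$ and contributes $\Gamma_{a0}^{a}=\gamma_{a}$ summed over $a\in\{1,\dots,n-1\}\setminus I$; and (b) one differentiates a tangential factor $E_{i_{r}}$ in the direction $E_{0}=\nu$ and contracts the $E_{0}$ slot, which contributes (up to a reordering sign $\sigma$) $\sigma\,\Gamma_{0,i_{r}}^{c}$ and forces $J$ to be obtained from $I$ by the single swap $i_{r}\mapsto c$. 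The remaining a priori possibility, differentiating $\nu$ and contracting a different index, produces the factor $\langle\nabla_{E_{a}}\nu;E_{c}\rangle=\gamma_{a}\delta_{ac}$, which vanishes off the diagonal precisely because the shape operator is diagonal in a principal frame.

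For the first identity I take $\mu=\lambda$: mechanism (b) is impossible (it needs $J\ne I$) and mechanism (a) gives exactly
\[
\langle\delta(\nu\wedge\lambda);\lambda\rangle=\sum_{a\in\{1,\dots,n-1\}\setminus I}\gamma_{a}=(\gamma_{1}+\cdots+\gamma_{n-1})-(\gamma_{i_{1}}+\cdots+\gamma_{i_{k-1}}).
\]
For the second identity I take $\lambda\ne\mu$: mechanism (a) is absent (it needs $J=I$), so both $\langle\delta(\nu\wedge\lambda);\mu\rangle$ and $\langle\delta(\nu\wedge\mu);\lambda\rangle$ vanish unless $I$ and $J$ differ by a single swap $i_{r}\leftrightarrow c$, in which case they equal $\sigma\,\Gamma_{0,i_{r}}^{c}$ and $\sigma'\,\Gamma_{0,c}^{i_{r}}$ respectively. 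The key point is that the two reordering signs coincide, $\sigma=\sigma'$ (transposing two entries of an ordered tuple costs the same sign in either direction, namely $(-1)$ to the number of common indices lying strictly between them), while antisymmetry gives $\Gamma_{0,i_{r}}^{c}=-\Gamma_{0,c}^{i_{r}}$; hence the two contributions cancel and the sum is $0$.

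The main obstacle is the sign bookkeeping in step (b): one must verify that the permutation signs arising from the interior products in $\langle\delta(\nu\wedge\lambda);\mu\rangle$ and in $\langle\delta(\nu\wedge\mu);\lambda\rangle$ are genuinely equal, so that the antisymmetry of the connection coefficients yields an exact cancellation rather than a doubling. It is also worth emphasising, and checking, that the off-diagonal terms a priori involve $\nabla_{\nu}E_{i_{r}}$, the normal derivative of the extended tangential frame, and that this extension-dependent quantity survives only in the antisymmetric combination that cancels, consistently with the left-hand side being intrinsic to $\partial\Omega$.
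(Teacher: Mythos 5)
Your proof is correct, and it takes a genuinely different route from the paper's. The paper computes $\left\langle \delta\left(\nu\wedge\lambda\right);\mu\right\rangle$ entirely in Cartesian coordinates: it expands the components of $\nu\wedge\lambda$, applies the product rule, and sorts the result into a divergence term $A=\operatorname{div}\nu=\gamma_{1}+\cdots+\gamma_{n-1}$, a block $B_{1}$ carrying the derivatives of $\nu$ (evaluated through the combinatorial identity $C^{jl}=\left(k-1\right)!\sum_{\gamma}\left\vert\widehat{\lambda_{\gamma}}\right\vert^{2}\lambda_{\gamma}^{j}\lambda_{\gamma}^{l}$ of Lemma \ref{Lemme 0 algebrique} together with (\ref{Equation courbures})), and a block $B_{2}$ carrying the derivatives of the tangential factors, which is annihilated wholesale by the factor $\nu\,\lrcorner\,\lambda=0$; for the second identity each block is shown to vanish separately, the essential one because the two swapped principal directions are orthogonal. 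You instead invoke the frame-independence of the trace defining $\delta$, write $\delta=\sum_{a}E_{a}\,\lrcorner\,\nabla_{E_{a}}$, and organize the Leibniz expansion by connection coefficients $\Gamma_{ab}^{c}$; the second identity then follows from a genuine pairwise cancellation $\Gamma_{0,i_{r}}^{c}+\Gamma_{0,c}^{i_{r}}=0$ rather than from termwise vanishing --- indeed the individual quantities $\Gamma_{0,i_{r}}^{c}=\left\langle \nabla_{\nu}E_{i_{r}};E_{c}\right\rangle$ depend on the chosen extension of the frame and need not vanish, and your remark that only the antisymmetric combination survives is exactly the right consistency check. Your route is shorter, bypasses Lemma \ref{Lemme 0 algebrique} entirely, and makes transparent why the left-hand sides are intrinsic to $\partial\Omega$; the paper's route stays at the level of elementary multi-index manipulations and reuses a lemma it needs elsewhere anyway. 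Two small points to tighten: when $J=I$, mechanism (b) is not absent but contributes $\Gamma_{0,i_{r}}^{i_{r}}=0$ by the antisymmetry $\Gamma_{ab}^{c}=-\Gamma_{ac}^{b}$; and in mechanism (a) the term $a=0$ must be discarded by the one-line observation $\left\langle \nabla_{\nu}\nu;\nu\right\rangle=\tfrac{1}{2}\nabla_{\nu}\left\vert\nu\right\vert^{2}=0$. Your sign claim $\sigma=\sigma'$ is correct, since the indices lying strictly between $i_{r}$ and $c$ are common to $I$ and $J$ and are therefore counted identically on both sides.
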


\begin{proof}
The case $k=1$ is immediate. Indeed the first equation reads as%
\[
\left\langle \delta\left(  \nu\wedge\lambda\right)  ;\lambda\right\rangle
=\delta\left(  \nu\right)  =\operatorname{div}\left(  \nu\right)  =\gamma
_{1}+\cdots+\gamma_{n-1}\,.
\]
While nothing is to be proved for the second equation. So we discuss now the
case $k\geq2.$\smallskip

\emph{Step 1:} $k\geq2$\emph{ (first equation).} We now prove that if
$\lambda=E_{i_{1}\cdots i_{k-1}}=E_{i_{1}}\wedge\cdots\wedge E_{i_{k-1}}\,,$
then%
\[
\left\langle \delta\left(  \nu\wedge\lambda\right)  ;\lambda\right\rangle
=\left(  \gamma_{1}+\cdots+\gamma_{n-1}\right)  -\left(  \gamma_{i_{1}}%
+\cdots+\gamma_{i_{k-1}}\right)
\]

\emph{(i)} We find that%
\[
\left\langle \delta\left(  \nu\wedge\lambda\right)  ;\lambda\right\rangle
=\frac{1}{\left(  k-1\right)  !}\sum_{s_{1},\cdots,s_{k-1}=1}^{n}\left[
\sum_{j=1}^{n}\left(  \nu\wedge\lambda\right)  _{x_{j}}^{js_{1}\cdots s_{k-1}%
}\right]  \lambda^{s_{1}\cdots s_{k-1}}=A+B
\]
where (recalling that $\left\vert \lambda\right\vert =1$)%
\begin{align*}
A  &  =\frac{1}{\left(  k-1\right)  !}\sum_{j,s_{1},\cdots,s_{k-1}=1}%
^{n}\left[  \nu^{j}\lambda^{s_{1}\cdots s_{k-1}}\right]  _{x_{j}}%
\lambda^{s_{1}\cdots s_{k-1}}\smallskip\\
&  =\frac{1}{\left(  k-1\right)  !}\sum_{j,s_{1},\cdots,s_{k-1}=1}^{n}%
\nu_{x_{j}}^{j}\left(  \lambda^{s_{1}\cdots s_{k-1}}\right)  ^{2}+\frac
{1}{\left(  k-1\right)  !}\sum_{j=1}^{n}\nu^{j}\sum_{s_{1},\cdots,s_{k-1}%
=1}^{n}\left[  \left(  \frac{\lambda^{s_{1}\cdots s_{k-1}}}{2}\right)
^{2}\right]  _{x_{j}}\smallskip\\
&  =\operatorname{div}\left(  \nu\right)  =\gamma_{1}+\cdots+\gamma_{n-1}%
\end{align*}
and%
\begin{align*}
B  &  =\frac{1}{\left(  k-1\right)  !}\sum_{j,s_{1},\cdots,s_{k-1}=1}%
^{n}\left[  \sum_{r=1}^{k-1}\left(  -1\right)  ^{r}\nu^{s_{r}}\lambda
^{js_{1}\cdots s_{r-1}s_{r+1}\cdots s_{k-1}}\right]  _{x_{j}}\lambda
^{s_{1}\cdots s_{k-1}}\smallskip\\
&  =\frac{1}{\left(  k-1\right)  !}\sum_{j,s_{1},\cdots,s_{k-1}=1}^{n}\left[
\sum_{r=1}^{k-1}\left(  -1\right)  ^{r}\nu_{x_{j}}^{s_{r}}\lambda
^{js_{1}\cdots s_{r-1}s_{r+1}\cdots s_{k-1}}\right]  \lambda^{s_{1}\cdots
s_{k-1}}\smallskip\\
&  +\frac{1}{\left(  k-1\right)  !}\sum_{j,s_{1},\cdots,s_{k-1}=1}^{n}\left[
\sum_{r=1}^{k-1}\left(  -1\right)  ^{r}\nu^{s_{r}}\lambda_{x_{j}}%
^{js_{1}\cdots s_{r-1}s_{r+1}\cdots s_{k-1}}\right]  \lambda^{s_{1}\cdots
s_{k-1}}\smallskip\\
&  =B_{1}+B_{2}\,.
\end{align*}
The result will be established once we prove that%
\begin{align*}
B_{1}  &  =\frac{1}{\left(  k-1\right)  !}\sum_{j,s_{1},\cdots,s_{k-1}=1}%
^{n}\left[  \sum_{r=1}^{k-1}\left(  -1\right)  ^{r}\nu_{x_{j}}^{s_{r}}%
\lambda^{js_{1}\cdots s_{r-1}s_{r+1}\cdots s_{k-1}}\right]  \lambda
^{s_{1}\cdots s_{k-1}}\smallskip\\
&  =-\left(  \gamma_{i_{1}}+\cdots+\gamma_{i_{k-1}}\right)
\end{align*}
and%
\[
B_{2}=\frac{1}{\left(  k-1\right)  !}\sum_{j,s_{1},\cdots,s_{k-1}=1}%
^{n}\left[  \sum_{r=1}^{k-1}\left(  -1\right)  ^{r}\nu^{s_{r}}\lambda_{x_{j}%
}^{js_{1}\cdots s_{r-1}s_{r+1}\cdots s_{k-1}}\right]  \lambda^{s_{1}\cdots
s_{k-1}}=0.
\]

\emph{(ii)} Let us first prove that $B_{2}=0$ (recalling that $\lambda
=E_{i_{1}}\wedge\cdots\wedge E_{i_{k-1}}$). We write%
\[
\left(  k-1\right)  !\,B_{2}=\sum_{r=1}^{k-1}\,\sum_{j,s_{1},\cdots
s_{r-1},s_{r+1},\cdots,s_{k-1}=1}^{n}\left(  \sum_{s_{r}=1}^{n}\left(
-1\right)  ^{r}\nu^{s_{r}}\lambda^{s_{1}\cdots s_{k-1}}\right)  \lambda
_{x_{j}}^{js_{1}\cdots s_{r-1}s_{r+1}\cdots s_{k-1}}.
\]
Observe that, for every $r=1,\cdots,k-1,$%
\[
\sum_{s_{r}=1}^{n}\left(  -1\right)  ^{r}\nu^{s_{r}}\left(  E_{i_{1}}%
\wedge\cdots\wedge E_{i_{k-1}}\right)  ^{s_{1}\cdots s_{k-1}}=-\left(
\nu\,\lrcorner\,\left(  E_{i_{1}}\wedge\cdots\wedge E_{i_{k-1}}\right)
\right)  ^{s_{1}\cdots s_{r-1}s_{r+1}\cdots s_{k-1}}=0,
\]
in view of Lemma \ref{Lemme 0 algebrique}, leading to the fact that $B_{2}%
=0.$\smallskip

\emph{(iii)} We finally show that $B_{1}=-\left(  \gamma_{i_{1}}+\cdots
+\gamma_{i_{k-1}}\right)  .$ Note that, interchanging the positions of the
indices $s_{r}$ and $s_{r^{\prime}}\,,$ we get (recalling that $\lambda
=E_{i_{1}}\wedge\cdots\wedge E_{i_{k-1}}$)%
\begin{align*}
B_{1}  &  =\frac{1}{\left(  k-1\right)  !}\sum_{j,s_{1},\cdots,s_{k-1}=1}%
^{n}\left[  \sum_{r=1}^{k-1}\left(  -1\right)  ^{r}\nu_{x_{j}}^{s_{r}}%
\lambda^{js_{1}\cdots s_{r-1}s_{r+1}\cdots s_{k-1}}\right]  \lambda
^{s_{1}\cdots s_{k-1}}\smallskip\\
&  =\frac{-1}{\left(  k-2\right)  !}\sum_{j,s_{1},\cdots,s_{k-1}=1}^{n}\left[
\nu_{x_{j}}^{s_{1}}\left(  E_{i_{1}}\wedge\cdots\wedge E_{i_{k-1}}\right)
^{js_{2}\cdots s_{k-1}}\left(  E_{i_{1}}\wedge\cdots\wedge E_{i_{k-1}}\right)
^{s_{1}\cdots s_{k-1}}\right]  .
\end{align*}
The result follows (cf.\thinspace Lemma \ref{Lemme 0 algebrique}) since, for
every $1\leq j,s_{1}\leq n,$%
\[
\sum_{r=1}^{k-1}\left(  E_{i_{r}}^{j}E_{i_{r}}^{s_{1}}\right)  =\frac
{1}{\left(  k-2\right)  !}\sum_{s_{2},\cdots,s_{k-1}=1}^{n}\left(  E_{i_{1}%
}\wedge\cdots\wedge E_{i_{k-1}}\right)  ^{js_{2}\cdots s_{k-1}}\left(
E_{i_{1}}\wedge\cdots\wedge E_{i_{k-1}}\right)  ^{s_{1}\cdots s_{k-1}}%
\]
and thus%
\[
B_{1}=-\sum_{r=1}^{k-1}\left(  \sum_{j,s_{1}=1}^{n}\nu_{x_{j}}^{s_{1}}%
E_{i_{r}}^{j}E_{i_{r}}^{s_{1}}\right)  =-\left(  \gamma_{i_{1}}+\cdots
+\gamma_{i_{k-1}}\right)
\]
which is exactly what had to be proved.\smallskip

\emph{Step 2:} $k\geq2$\emph{ (second equation).} We finally establish that if%
\[
\lambda=E_{i_{1}\cdots i_{k-1}}=E_{i_{1}}\wedge\cdots\wedge E_{i_{k-1}}%
\quad\text{and}\quad\mu=E_{j_{1}\cdots j_{k-1}}=E_{j_{1}}\wedge\cdots\wedge
E_{j_{k-1}}\,.
\]
and $\lambda\neq\mu,$ then%
\[
\left\langle \delta\left(  \nu\wedge\lambda\right)  ;\mu\right\rangle
+\left\langle \delta\left(  \nu\wedge\mu\right)  ;\lambda\right\rangle =0.
\]
This amounts to showing that $X=0$ where%
\begin{equation}
X=\sum_{s_{1},\cdots,s_{k-1}=1}^{n}\left[  \sum_{j=1}^{n}\left(  \nu
\wedge\lambda\right)  _{x_{j}}^{js_{1}\cdots s_{k-1}}\right]  \mu^{s_{1}\cdots
s_{k-1}}+\sum_{s_{1},\cdots,s_{k-1}=1}^{n}\left[  \sum_{j=1}^{n}\left(
\nu\wedge\mu\right)  _{x_{j}}^{js_{1}\cdots s_{k-1}}\right]  \lambda
^{s_{1}\cdots s_{k-1}}. \label{Equation 6 dans CT=1}%
\end{equation}
We write $X=A+B$ where%
\begin{align*}
A  &  =\sum_{j,s_{1},\cdots,s_{k-1}=1}^{n}\left[  \nu^{j}\lambda^{s_{1}\cdots
s_{k-1}}\right]  _{x_{j}}\mu^{s_{1}\cdots s_{k-1}}+\sum_{j,s_{1}%
,\cdots,s_{k-1}=1}^{n}\left[  \nu^{j}\mu^{s_{1}\cdots s_{k-1}}\right]
_{x_{j}}\lambda^{s_{1}\cdots s_{k-1}}\smallskip\\
&  =2\sum_{j}^{n}\nu_{x_{j}}^{j}\sum_{s_{1},\cdots,s_{k-1}=1}^{n}%
\lambda^{s_{1}\cdots s_{k-1}}\mu^{s_{1}\cdots s_{k-1}}+\sum_{j=1}^{n}\nu
^{j}\sum_{s_{1},\cdots,s_{k-1}=1}^{n}\left[  \lambda^{s_{1}\cdots s_{k-1}}%
\mu^{s_{1}\cdots s_{k-1}}\right]  _{x_{j}}=0
\end{align*}
(since $\left\langle \lambda;\mu\right\rangle =0$ by Lemma
\ref{Lemme 0 algebrique}) and%
\begin{align*}
B  &  =\sum_{j,s_{1},\cdots,s_{k-1}=1}^{n}\left[  \sum_{r=1}^{k-1}\left(
-1\right)  ^{r}\nu^{s_{r}}\lambda^{js_{1}\cdots s_{r-1}s_{r+1}\cdots s_{k-1}%
}\right]  _{x_{j}}\mu^{s_{1}\cdots s_{k-1}}\smallskip\\
&  +\sum_{j,s_{1},\cdots,s_{k-1}=1}^{n}\left[  \sum_{r=1}^{k-1}\left(
-1\right)  ^{r}\nu^{s_{r}}\mu^{js_{1}\cdots s_{r-1}s_{r+1}\cdots s_{k-1}%
}\right]  _{x_{j}}\lambda^{s_{1}\cdots s_{k-1}}%
\end{align*}
which leads to $B=B_{1}+B_{2}$ where%
\begin{align*}
B_{1}  &  =\sum_{j,s_{1},\cdots,s_{k-1}=1}^{n}\left[  \sum_{r=1}^{k-1}\left(
-1\right)  ^{r}\nu_{x_{j}}^{s_{r}}\lambda^{js_{1}\cdots s_{r-1}s_{r+1}\cdots
s_{k-1}}\right]  \mu^{s_{1}\cdots s_{k-1}}\smallskip\\
&  +\sum_{j,s_{1},\cdots,s_{k-1}=1}^{n}\left[  \sum_{r=1}^{k-1}\left(
-1\right)  ^{r}\nu_{x_{j}}^{s_{r}}\mu^{js_{1}\cdots s_{r-1}s_{r+1}\cdots
s_{k-1}}\right]  \lambda^{s_{1}\cdots s_{k-1}}%
\end{align*}%
\begin{align*}
B_{2}  &  =\sum_{j,s_{1},\cdots,s_{k-1}=1}^{n}\left[  \sum_{r=1}^{k-1}\left(
-1\right)  ^{r}\nu^{s_{r}}\lambda_{x_{j}}^{js_{1}\cdots s_{r-1}s_{r+1}\cdots
s_{k-1}}\right]  \mu^{s_{1}\cdots s_{k-1}}\smallskip\\
&  +\sum_{j,s_{1},\cdots,s_{k-1}=1}^{n}\left[  \sum_{r=1}^{k-1}\left(
-1\right)  ^{r}\nu^{s_{r}}\mu_{x_{j}}^{js_{1}\cdots s_{r-1}s_{r+1}\cdots
s_{k-1}}\right]  \lambda^{s_{1}\cdots s_{k-1}}.
\end{align*}
It remains to prove, in order to show that $X=0$ where $X$ is as in
(\ref{Equation 6 dans CT=1}), that $B_{1}=B_{2}=0.$\smallskip

\emph{(i)} We start with the fact that $B_{2}=0.$ We rewrite the definition as%
\begin{align*}
B_{2}  &  =\sum_{r=1}^{k-1}\,\sum_{j,s_{1},\cdots s_{r-1},s_{r+1}%
,\cdots,s_{k-1}=1}^{n}\left[  \sum_{s_{r}=1}^{n}\left(  -1\right)  ^{r}%
\nu^{s_{r}}\mu^{s_{1}\cdots s_{k-1}}\right]  \lambda_{x_{j}}^{js_{1}\cdots
s_{r-1}s_{r+1}\cdots s_{k-1}}\smallskip\\
&  +\sum_{r=1}^{k-1}\,\sum_{j,s_{1},\cdots s_{r-1},s_{r+1},\cdots,s_{k-1}%
=1}^{n}\left[  \sum_{s_{r}=1}^{n}\left(  -1\right)  ^{r}\nu^{s_{r}}%
\lambda^{s_{1}\cdots s_{k-1}}\right]  \mu_{x_{j}}^{js_{1}\cdots s_{r-1}%
s_{r+1}\cdots s_{k-1}}.
\end{align*}
Since, for every $r=1,\cdots,k-1,$%
\[
\sum_{s_{r}=1}^{n}\left(  -1\right)  ^{r}\nu^{s_{r}}\lambda^{s_{1}\cdots
s_{k-1}}=-\left(  \nu\,\lrcorner\,\lambda\right)  ^{s_{1}\cdots s_{r-1}%
s_{r+1}\cdots s_{k-1}}=0
\]%
\[
\sum_{s_{r}=1}^{n}\left(  -1\right)  ^{r}\nu^{s_{r}}\mu^{s_{1}\cdots s_{k-1}%
}=-\left(  \nu\,\lrcorner\,\mu\right)  ^{s_{1}\cdots s_{r-1}s_{r+1}\cdots
s_{k-1}}=0,
\]
we find that indeed $B_{2}=0.$\smallskip

\emph{(ii)} We finally prove that $B_{1}=0.$ Note that, interchanging the
positions of the indices $s_{r}$ and $s_{r^{\prime}}\,,$ we obtain%
\[
B_{1}=-\left(  k-1\right)  \sum_{j,s_{1},\cdots,s_{k-1}=1}^{n}\nu_{x_{j}%
}^{s_{1}}\left[  \lambda^{js_{2}\cdots s_{k-1}}\mu^{s_{1}\cdots s_{k-1}}%
+\mu^{js_{2}\cdots s_{k-1}}\lambda^{s_{1}\cdots s_{k-1}}\right]  .
\]
The result follows if we can show that, if $\lambda\neq\mu$ where%
\[
\lambda=\lambda_{1}\wedge\cdots\wedge\lambda_{k-1}\quad\text{and}\quad\mu
=\mu_{1}\wedge\cdots\wedge\mu_{k-1}\,,
\]
with $\lambda_{i},\mu_{i}\in\left\{  E_{1},\cdots,E_{n-1}\right\}  ,$ then%
\[
\sum_{j,s_{1},\cdots,s_{k-1}=1}^{n}\nu_{x_{j}}^{s_{1}}\left[  \lambda
^{js_{2}\cdots s_{k-1}}\mu^{s_{1}\cdots s_{k-1}}\right]  =\sum_{j,s_{1}%
,\cdots,s_{k-1}=1}^{n}\nu_{x_{j}}^{s_{1}}\left[  \mu^{js_{2}\cdots s_{k-1}%
}\lambda^{s_{1}\cdots s_{k-1}}\right]  =0.
\]
Since both identities are established similarly, we prove only the first one,
namely%
\begin{equation}
\sum_{j,s_{1},\cdots,s_{k-1}=1}^{n}\nu_{x_{j}}^{s_{1}}\left[  \left(
\lambda_{1}\wedge\cdots\wedge\lambda_{k-1}\right)  ^{js_{2}\cdots s_{k-1}%
}\left(  \mu_{1}\wedge\cdots\wedge\mu_{k-1}\right)  ^{s_{1}s_{2}\cdots
s_{k-1}}\right]  =0. \label{Equation 7 dans CT=1}%
\end{equation}
Since $\lambda\neq\mu$ we assume, up to reordering, that $\lambda_{1}\neq
\mu_{1}\,.$ We claim that%
\begin{equation}
\left.
\begin{array}
[c]{ccl}%
C^{js_{1}} & = &
{\displaystyle\sum\limits_{s_{2},\cdots,s_{k-1}=1}^{n}}
\left[  \left(  \lambda_{1}\wedge\cdots\wedge\lambda_{k-1}\right)
^{js_{2}\cdots s_{k-1}}\left(  \mu_{1}\wedge\cdots\wedge\mu_{k-1}\right)
^{s_{1}s_{2}\cdots s_{k-1}}\right]  \smallskip\\
& = & \left(  \left(  k-2\right)  !\right)  \,\lambda_{1}^{j}\mu_{1}^{s_{1}%
}\,\left\langle \lambda_{2}\wedge\cdots\wedge\lambda_{k-1};\mu_{2}\wedge
\cdots\wedge\mu_{k-1}\right\rangle .
\end{array}
\right.  \label{Equation 8 dans CT=1}%
\end{equation}
which leads to%
\begin{align*}
&  \sum_{j,s_{1},\cdots,s_{k-1}=1}^{n}\nu_{x_{j}}^{s_{1}}\left[  \left(
\lambda_{1}\wedge\cdots\wedge\lambda_{k-1}\right)  ^{js_{2}\cdots s_{k-1}%
}\left(  \mu_{1}\wedge\cdots\wedge\mu_{k-1}\right)  ^{s_{1}s_{2}\cdots
s_{k-1}}\right]  \smallskip\\
&  =\left(  \left(  k-2\right)  !\right)  \left\langle \lambda_{2}\wedge
\cdots\wedge\lambda_{k-1};\mu_{2}\wedge\cdots\wedge\mu_{k-1}\right\rangle
\sum_{j,s_{1}=1}^{n}\nu_{x_{j}}^{s_{1}}\lambda_{1}^{j}\mu_{1}^{s_{1}%
}\smallskip\\
&  =\left(  \left(  k-2\right)  !\right)  \left\langle \lambda_{2}\wedge
\cdots\wedge\lambda_{k-1};\mu_{2}\wedge\cdots\wedge\mu_{k-1}\right\rangle
\gamma_{\lambda_{1}}\,\sum_{s_{1}=1}^{n}\lambda_{1}^{s_{1}}\mu_{1}^{s_{1}}=0
\end{align*}
(where $\gamma_{\lambda_{1}}$ is the principal curvature corresponding to
$\lambda_{1}$) which is exactly (\ref{Equation 7 dans CT=1}). It remains to
show (\ref{Equation 8 dans CT=1}). We have%
\begin{align*}
\left(  \lambda_{1}\wedge\cdots\wedge\lambda_{k-1}\right)  ^{js_{2}\cdots
s_{k-1}}  &  =\sum_{r=1}^{k-1}\left(  -1\right)  ^{r+1}\lambda_{r}^{j}\left(
\widehat{\lambda_{r}}\right)  ^{s_{2}\cdots s_{k-1}}\smallskip\\
\left(  \mu_{1}\wedge\cdots\wedge\mu_{k-1}\right)  ^{s_{1}s_{2}\cdots
s_{k-1}}  &  =\sum_{t=1}^{k-1}\left(  -1\right)  ^{t+1}\mu_{t}^{s_{1}}\left(
\widehat{\mu_{t}}\right)  ^{s_{2}\cdots s_{k-1}}.
\end{align*}
where%
\[
\widehat{\lambda_{r}}=\lambda_{1}\wedge\cdots\lambda_{r-1}\wedge\lambda
_{r+1}\wedge\cdots\wedge\lambda_{k-1}\quad\text{and}\quad\widehat{\mu_{t}}%
=\mu_{1}\wedge\cdots\mu_{t-1}\wedge\mu_{t+1}\wedge\cdots\wedge\mu_{k-1}\,.
\]
We therefore have that%
\begin{align*}
C^{js_{1}}  &  =\sum_{r,t=1}^{k-1}\left(  -1\right)  ^{r+t}\lambda_{r}^{j}%
\mu_{t}^{s_{1}}\sum_{s_{2},\cdots,s_{k-1}=1}^{n}\left(  \widehat{\lambda_{r}%
}\right)  ^{s_{2}\cdots s_{k-1}}\left(  \widehat{\mu_{t}}\right)
^{s_{2}\cdots s_{k-1}}\smallskip\\
&  =\left(  \left(  k-2\right)  !\right)  \sum_{r,t=1}^{k-1}\left(  -1\right)
^{r+t}\lambda_{r}^{j}\mu_{t}^{s_{1}}\left\langle \widehat{\lambda_{r}%
};\widehat{\mu_{t}}\right\rangle .
\end{align*}
Invoking Lemma \ref{Lemme 0 algebrique} and the fact that $\lambda_{1}\neq
\mu_{1}\,,$ we obtain that, unless $r=t=1,$%
\[
\left\langle \widehat{\lambda_{r}};\widehat{\mu_{t}}\right\rangle =0
\]
leading to (\ref{Equation 8 dans CT=1}). The proof is therefore complete.
\end{proof}

\subsection{Formulas for $L^{\nu}$ and $K^{\nu}$ in terms of principal
curvatures}

We first prove the symmetry of $L^{\nu}$ and $K^{\nu},$ which essentially
follows from the symmetry of the second fundamental form of a hypersurface. We
use Remark \ref{Remarque: permutation de nu avec L et K} (i)--(iii) in the
following lemma and its proof. In particular, recall that we have extended
$\nu$ in a neighborhood of $\Sigma$ such that for any $\alpha,\beta$
\[
\left[  L^{\nu}\left(  \nu\wedge\alpha\right)  =\nu\wedge L^{\nu}\left(
\alpha\right)  \quad\text{and}\quad K^{\nu}\left(  \nu\,\lrcorner
\,\alpha\right)  =\nu\,\lrcorner\,K^{\nu}\left(  \alpha\right)  \right]
\quad\text{on }\Sigma.
\]

\begin{lemma}
\label{lemma:L nu and K nu symmetric}Let $\Sigma\subset\mathbb{R}^{n}$ be a
smooth $n-1$ dimensional hypersurface with unit normal $\nu$ and let $1\leq
k\leq n-1.$ Then at every point $x_{0}$ of $\Sigma$ and for every
$\alpha,\,\beta\in\Lambda^{k}\left(  \mathbb{R}^{n}\right)  $ the following
two identities hold%
\[
\left\langle L^{\nu}\left(  \nu\wedge\alpha\right)  ;\nu\wedge\beta
\right\rangle =\left\langle L^{\nu}\left(  \nu\wedge\beta\right)  ;\nu
\wedge\alpha\right\rangle
\]%
\[
\left\langle K^{\nu}\left(  \nu\,\lrcorner\,\alpha\right)  ;\nu\,\lrcorner
\,\beta\right\rangle =\left\langle K^{\nu}\left(  \nu\,\lrcorner
\,\beta\right)  ;\nu\,\lrcorner\,\alpha\right\rangle .
\]

\end{lemma}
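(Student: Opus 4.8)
The plan is to base everything on the self-adjointness of $K^{\nu}$ as an endomorphism of $\Lambda^{k-1}$, together with the freedom to choose the extension of $\nu$. First I would dispose of the identity for $L^{\nu}$ by Hodge duality: the star operator is a linear isometry of $\Lambda^{\bullet}$ satisfying $\ast\left(\nu\wedge\cdot\right)=\pm\,\nu\,\lrcorner\,\left(\ast\,\cdot\right)$ (Notations (v)) and conjugating $L^{\nu}$ into $K^{\nu}$ up to sign, so that the first identity is equivalent to the second. It therefore suffices to prove
\[
\left\langle K^{\nu}\left(\nu\,\lrcorner\,\alpha\right);\nu\,\lrcorner\,\beta\right\rangle =\left\langle K^{\nu}\left(\nu\,\lrcorner\,\beta\right);\nu\,\lrcorner\,\alpha\right\rangle .
\]

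The key observation is that this quantity is independent of the chosen extension of $\nu$. Indeed, by Remark \ref{Remarque: permutation de nu avec L et K}(i) we have $K^{\nu}\left(\nu\,\lrcorner\,\alpha\right)=\nu\,\lrcorner\,K^{\nu}\left(\alpha\right)$ on $\Sigma$, and the right-hand side does not depend on the extension. Hence I am free to extend $\nu$ as the gradient of the signed distance function $\phi$ to $\Sigma$, so that $\nu=\nabla\phi$ and $\nabla\nu=D^{2}\phi$ is a \emph{symmetric} matrix at every point. This is precisely where the symmetry of the second fundamental form enters, the shape operator being the restriction of $\nabla\nu$ to $T\Sigma$.

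With this extension the statement reduces to the following algebraic claim: if $\partial_{j}\nu^{m}=\partial_{m}\nu^{j}$ for all $j,m$, then $K^{\nu}$ is self-adjoint on $\Lambda^{k-1}$. To prove it I would expand, from Definition \ref{Definition de L et nu} and the Leibniz rule for $\delta$,
\[
\delta\left(\nu\wedge dx^{I}\right)=c\left(\nu\right)dx^{I}+\sum_{j=1}^{n}\partial_{j}\nu\wedge\left(dx^{j}\,\lrcorner\,dx^{I}\right),
\]
where $c\left(\nu\right)$ is a scalar (a multiple of $\operatorname{div}\nu$) whose contribution is symmetric in the two multi-indices and therefore harmless. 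Using the adjunction $\left\langle \nu\wedge\xi;\eta\right\rangle =\left\langle \xi;\nu\,\lrcorner\,\eta\right\rangle$ from \eqref{equation:decomposing omega by nu}, the matrix entries $M_{IJ}=\left\langle \delta\left(\nu\wedge dx^{I}\right);dx^{J}\right\rangle$ satisfy
\[
M_{IJ}-M_{JI}=\sum_{j,m=1}^{n}\left(\partial_{j}\nu^{m}-\partial_{m}\nu^{j}\right)\left\langle dx^{j}\,\lrcorner\,dx^{I};dx^{m}\,\lrcorner\,dx^{J}\right\rangle ,
\]
which vanishes precisely because $\nabla\nu$ is symmetric. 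Thus $K^{\nu}$ is self-adjoint, and applying this to the forms $\nu\,\lrcorner\,\alpha$ and $\nu\,\lrcorner\,\beta$ gives the desired identity for $K^{\nu}$, which by the Hodge duality above also yields the one for $L^{\nu}$.

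The main obstacle is the bookkeeping in this last computation: getting the signs right in the Leibniz expansion of $\delta\left(\nu\wedge\cdot\right)$ and applying the wedge/contraction adjunction correctly so as to isolate the antisymmetric combination $\partial_{j}\nu^{m}-\partial_{m}\nu^{j}$. A heavier alternative would retain a general extension and, via Lemma \ref{Lemme 1 algebrique} and the curvature computations of Lemma \ref{Lemme dans CT=1}, show that in a principal orthonormal frame the matrix of $K^{\nu}$ is diagonal with entries $\left(\gamma_{1}+\cdots+\gamma_{n-1}\right)-\left(\gamma_{i_{1}}+\cdots+\gamma_{i_{k-1}}\right)$; but this route must contend with extension-dependent connection terms, and choosing the gradient extension sidesteps them entirely.
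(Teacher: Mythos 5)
Your proposal is correct, and it organizes the argument differently from the paper. The paper proves the $L^{\nu}$ identity first (and gets the $K^{\nu}$ one by duality): it rotates coordinates so that $\nu(x_{0})=e_{1}$, reduces by linearity to basis $k$-forms, and after a case analysis boils the claim down to $\nu_{x_{j_{\tau}}}^{i_{\gamma}}=\nu_{x_{i_{\gamma}}}^{j_{\tau}}$ for two \emph{tangent} directions, which is exactly the symmetry of the second fundamental form; no special extension is needed because only tangential derivatives of $\nu$ survive the reduction. You instead prove the $K^{\nu}$ identity by exploiting the extension-independence from Remark \ref{Remarque: permutation de nu avec L et K}\,(i) to pick the gradient-of-signed-distance extension, under which the full matrix $\nabla\nu=D^{2}\phi$ is symmetric, and then show that $K^{\nu}$ is pointwise self-adjoint via the Leibniz expansion of $\delta(\nu\wedge dx^{I})$; I checked that computation and it is sound (your displayed expansion should carry a minus sign on the sum, $\delta(\nu\wedge dx^{I})=(\operatorname{div}\nu)\,dx^{I}-\sum_{m}\partial_{m}\nu\wedge(e_{m}\,\lrcorner\,dx^{I})$, by the antiderivation rule, but this does not affect the vanishing of $M_{IJ}-M_{JI}$). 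Both proofs ultimately rest on the same geometric fact, but yours buys a stronger intermediate statement (full self-adjointness of $K^{\nu}$ on every degree, for the distinguished extension) and avoids the basis/case analysis, at the price of having to invoke extension-independence and the existence of a unit-length gradient extension; the paper's version is more elementary in that it works verbatim for an arbitrary unit extension.
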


\begin{proof}
We only prove the first one, the second one follows by duality
(\cite{Csato-Dac-Kneuss(livre)} Lemma 5.3).\smallskip

\emph{Step 1.} Note that since $\alpha\in\Lambda^{k}\left(  \mathbb{R}%
^{n}\right)  ,$ i.e. has constant coefficients, $L^{\nu}\left(  \alpha\right)
=d\left(  \nu\,\lrcorner\,\alpha\right)  .$ Let us prove that we can assume%
\[
\nu\left(  x_{0}\right)  =\left(  1,0,\cdots,0\right)  =e_{1}\,.
\]
Choose $A\in O(n)$ such that $A^{\ast}(\nu(x_{0}))=e_{1}$ and set $\mu
=A^{\ast}(\nu).$ It has the property that%
\[
\mu\left(  A^{-1}x_{0}\right)  =e_{1}\,.
\]
Set%
\[
\tilde{\alpha}=A^{\ast}\left(  \alpha\right)  \quad\text{and}\quad\tilde
{\beta}=A^{\ast}\left(  \beta\right)  .
\]
It now follows from Theorem 3.10 (note that $A^{\ast}=A^{\sharp}$ if $A\in
O\left(  n\right)  $) and Proposition 2.19 in \cite{Csato-Dac-Kneuss(livre)}
that%
\[
\left\langle \nu\wedge L^{\nu}\left(  \alpha\right)  ;\nu\wedge\beta
\right\rangle =\left\langle A^{\ast}(\nu)\wedge d\left(  A^{\sharp}%
(\nu)\,\lrcorner\,A^{\ast}\left(  \alpha\right)  \right)  ;A^{\ast}(\nu)\wedge
A^{\ast}\left(  \beta\right)  \right\rangle =\left\langle \mu\wedge d\left(
\mu\,\lrcorner\,\tilde{\alpha}\right)  ;\mu\wedge\tilde{\beta}\right\rangle .
\]
Since also $\tilde{\alpha}\in\Lambda^{k}\left(  \mathbb{R}^{n}\right)  ,$ i.e.
has constant coefficients, $d\left(  \mu\,\lrcorner\,\tilde{\alpha}\right)
=L^{\mu}\left(  \tilde{\alpha}\right)  .$ This proves the claim of Step
1.\smallskip

\emph{Step 2.} We now assume that $\nu(x_{0})=e_{1}\,.$ By linearity it is
sufficient to show the claim for%
\[
\alpha=dx^{i_{1}}\wedge\cdots\wedge dx^{i_{k}}\quad\text{and}\quad
\beta=dx^{j_{1}}\wedge\cdots\wedge dx^{j_{k}}.
\]
We distinguish two cases.\smallskip

\emph{Case 1: }$i_{1}=1$\emph{ or }$j_{1}=1.$ We consider only the case
$i_{1}=1,$ the other one being handled similarly. Then $\nu\wedge\alpha=0$
(which implies $L^{\nu}\left(  \nu\wedge\alpha\right)  =0$) and therefore%
\[
\left\langle L^{\nu}\left(  \nu\wedge\alpha\right)  ;\nu\wedge\beta
\right\rangle =0=\left\langle L^{\nu}\left(  \nu\wedge\beta\right)  ;\nu
\wedge\alpha\right\rangle
\]
and the symmetry is proved.\smallskip

\emph{Case 2: }$i_{1}\,,j_{1}>1.$ We have to show that%
\[
\left\langle \nu\wedge d\left(  \nu\,\lrcorner\,\alpha\right)  ;\nu\wedge
\beta\right\rangle =\left\langle \nu\wedge d\left(  \nu\,\lrcorner
\,\beta\right)  ;\nu\wedge\alpha\right\rangle .
\]
Since $j_{1}>1,$ we get at $x_{0}$ that $\beta=e_{1}\,\lrcorner\,\left(
e_{1}\wedge\beta\right)  $ and the same for $\alpha.$ So we have to show that%
\begin{equation}
\left\langle d\left(  \nu\,\lrcorner\,\alpha\right)  ;\beta\right\rangle
=\left\langle d\left(  \nu\,\lrcorner\,\beta\right)  ;\alpha\right\rangle .
\label{eq:sym of L nu simplified}%
\end{equation}
Clearly we can assume that $\left(  i_{1},\cdots,i_{k}\right)  \neq\left(
j_{1},\cdots,j_{k}\right)  .$ By a direct calculation one obtains%
\[
d\left(  \nu\,\lrcorner\,\alpha\right)  =\sum_{s=1}^{n}\sum_{\gamma=1}%
^{k}\left(  -1\right)  ^{\gamma-1}\nu_{x_{s}}^{i_{\gamma}}\,dx^{s}\wedge
dx^{i_{1}}\wedge\cdots\wedge\widehat{dx^{i_{\gamma}}}\wedge\cdots\wedge
dx^{i_{k}},
\]
where $\widehat{a}$ means that $a$ has been omitted. Two possibilities may
then happen.\smallskip

\emph{Case 2.1.} $\left\{  i_{1},\cdots,i_{k}\right\}  $ and $\left\{
j_{1},\cdots,j_{k}\right\}  $ differ in more than one index (considered as
sets). Then for any $s=1,\cdots,n$%
\[
\left\langle dx^{s}\wedge dx^{i_{1}}\wedge\cdots\wedge\widehat{dx^{i_{\gamma}%
}}\wedge\cdots\wedge dx^{i_{k}};dx^{j_{1}}\wedge\cdots\wedge dx^{j_{k}%
}\right\rangle =0.
\]
It follows that $\left\langle d\left(  \nu\,\lrcorner\,\alpha\right)
;\beta\right\rangle =0$ and by symmetry (\ref{eq:sym of L nu simplified})
follows.\smallskip

\emph{Case 2.2.} There exist $\tau,\gamma\in\left\{  1,\cdots,k\right\}  $
such that $\left(  i_{1},\cdots,\widehat{i_{\gamma}},\cdots,i_{k}\right)
=\left(  j_{1},\cdots,\widehat{j_{\tau}},\cdots,j_{k}\right)  .$ Without loss
of generality $\tau\leq\gamma$ and hence%
\[
\left(  j_{1},\cdots,j_{k}\right)  =\left(  i_{1},\cdots,i_{\tau-1},j_{\tau
},i_{\tau},\cdots,\widehat{i_{\gamma}},\cdots,i_{k}\right)  .
\]
We immediately find%
\[
\left\langle d\left(  \nu\,\lrcorner\,\alpha\right)  ;\beta\right\rangle
=\left(  -1\right)  ^{\tau+\gamma}\nu_{x_{j_{\tau}}}^{i_{\gamma}}\,.
\]
In the same way, using that%
\[
\left(  i_{1},\cdots,i_{k}\right)  =\left(  j_{1},\cdots,\widehat{j_{\tau}%
},\cdots,j_{\gamma},i_{\gamma},j_{\gamma+1},\cdots,j_{k}\right)  ,
\]
one obtains%
\[
\left\langle d\left(  \nu\,\lrcorner\,\beta\right)  ;\alpha\right\rangle
=\left(  -1\right)  ^{\tau+\gamma}\nu_{x_{i_{\gamma}}}^{j_{\tau}}\,.
\]
So to prove the symmetry we have to show that%
\[
\nu_{x_{j_{\tau}}}^{i_{\gamma}}=\nu_{x_{i_{\gamma}}}^{j_{\tau}}\quad\text{or
equivalently}\quad\left\langle \nabla\nu\cdot e_{j_{\tau}};e_{i_{\gamma}%
}\right\rangle =\left\langle \nabla\nu\cdot e_{i_{\gamma}};e_{j_{\tau}%
}\right\rangle .
\]
This last equality follows from the symmetry of the second fundamental form of
the hypersurface $\Sigma$ at the point $x_{0},$ because $e_{i_{\gamma}}$ and
$e_{j_{\tau}}$ are tangent vectors.\smallskip
\end{proof}

We now improve Lemma \ref{lemma:L nu and K nu symmetric} (for a different
proof of the next result see Lemma
\ref{lemma:S_k in terms of principal directions}).

\begin{lemma}
\label{lemma:L nu equal bilinear form with curvatures and dual version}Let
$\Sigma\subset\mathbb{R}^{n}$ be a smooth $n-1$ dimensional hypersurface with
unit normal $\nu$ and let $1\leq k\leq n-1.$ Let $E_{1},\cdots,E_{n-1}$ be an
orthonormal set of principal directions of $\Sigma$ with associated principal
curvatures $\gamma_{1},\cdots,\gamma_{n-1}\,.$ Then, at every point $x_{0}%
\in\Sigma$ and for every $\alpha,\,\beta\in\Lambda^{k}\left(  \mathbb{R}%
^{n}\right)  ,$ the following two identities hold%
\begin{equation}
\left\langle L^{\nu}\left(  \nu\wedge\alpha\right)  ;\nu\wedge\beta
\right\rangle =\sum_{1\leq i_{1}<\cdots<i_{k}\leq n-1}\left\langle
\alpha;E_{i_{1}\cdots i_{k}}\right\rangle \left\langle \beta;E_{i_{1}\cdots
i_{k}}\right\rangle \sum_{j\in\{i_{1},\cdots,i_{k}\}}\gamma_{j}
\label{eq:L nu in terms of gamma i}%
\end{equation}%
\begin{equation}
\left\langle K^{\nu}\left(  \nu\,\lrcorner\,\alpha\right)  ;\nu\,\lrcorner
\,\beta\right\rangle =\sum_{1\leq i_{1}<\cdots<i_{k-1}\leq n-1}\left\langle
\alpha;\nu\wedge E_{i_{1}\cdots i_{k-1}}\right\rangle \left\langle \beta
;\nu\wedge E_{i_{1}\cdots i_{k-1}}\right\rangle \sum_{j\notin\left\{
i_{1},\cdots,i_{k-1}\right\}  }\gamma_{j} \label{eq:K nu in terms of gamma i}%
\end{equation}
where $E_{i_{1}\cdots i_{k}}=E_{i_{1}}\wedge\cdots\wedge E_{i_{k}}\,.$
\end{lemma}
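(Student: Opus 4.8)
The plan is to establish the $K^{\nu}$ identity \eqref{eq:K nu in terms of gamma i} directly and then obtain the $L^{\nu}$ identity \eqref{eq:L nu in terms of gamma i} by Hodge duality; I prove $K^{\nu}$ first because Lemmas~\ref{Lemme 1 algebrique} and \ref{Lemme dans CT=1} are phrased in terms of $K^{\nu}$ and $\delta(\nu\wedge\,\cdot\,)$ (this reverses the order used in the proof of Lemma~\ref{lemma:L nu and K nu symmetric}). First I would fix a point $x_{0}\in\Sigma$ and complete $\nu(x_{0})$ to the orthonormal basis $\{\nu,E_{1},\cdots,E_{n-1}\}$ of principal directions. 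Writing $E_{I}=E_{i_{1}}\wedge\cdots\wedge E_{i_{k-1}}$, the $\binom{n}{k}$ forms $\{E_{j_{1}\cdots j_{k}}\}_{1\leq j_{1}<\cdots<j_{k}\leq n-1}$ together with $\{\nu\wedge E_{I}\}_{1\leq i_{1}<\cdots<i_{k-1}\leq n-1}$ form an orthonormal basis of $\Lambda^{k}(\mathbb{R}^{n})$. Since $\nu\,\lrcorner\,E_{j_{1}\cdots j_{k}}=0$ and, by \eqref{equation:decomposing omega by nu}, $\nu\,\lrcorner\,(\nu\wedge E_{I})=E_{I}$ (because $\left\vert\nu\right\vert=1$ and $\nu\,\lrcorner\,E_{I}=0$), the orthonormal expansion of $\alpha$ collapses under $\nu\,\lrcorner\,$ to
\[
\nu\,\lrcorner\,\alpha=\sum_{I}\langle\alpha;\nu\wedge E_{I}\rangle\,E_{I},
\]
and similarly for $\beta$. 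Hence, by bilinearity, the whole computation reduces to the numbers $\langle K^{\nu}(E_{I});E_{J}\rangle$ at $x_{0}$. Here I would use that $K^{\nu}$ acts pointwise in its form argument (Remark~\ref{Remarque: permutation de nu avec L et K}(iii)), so that evaluating $K^{\nu}$ on the constant form $E_{I}(x_{0})$ agrees with evaluating it on the frame field $E_{I}$ at $x_{0}$; this licenses the use of the differential lemmas below.

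To diagonalize, I would apply the polarization identity \eqref{Equation 1 dans Lemme 1 algebrique} of Lemma~\ref{Lemme 1 algebrique} to the frame fields $\alpha=\nu\wedge E_{I}$ and $\beta=\nu\wedge E_{J}$, which satisfy $\nu\wedge\alpha=\nu\wedge\beta=0$ with $\nu\,\lrcorner\,\alpha=E_{I}$ and $\nu\,\lrcorner\,\beta=E_{J}$. Since $\alpha\,\lrcorner\,\beta=\langle\nu\wedge E_{I};\nu\wedge E_{J}\rangle=\langle E_{I};E_{J}\rangle=\delta_{IJ}$ is constant, the term $\langle\nabla(\alpha\,\lrcorner\,\beta);\nu\rangle$ vanishes and \eqref{Equation 1 dans Lemme 1 algebrique} reduces to
\[
\langle K^{\nu}(E_{I});E_{J}\rangle+\langle K^{\nu}(E_{J});E_{I}\rangle=\langle\delta(\nu\wedge E_{I});E_{J}\rangle+\langle\delta(\nu\wedge E_{J});E_{I}\rangle.
\]
By the symmetry of $K^{\nu}$ (Lemma~\ref{lemma:L nu and K nu symmetric}) the left-hand side equals $2\langle K^{\nu}(E_{I});E_{J}\rangle$. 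For $I\neq J$ the right-hand side is $0$ by the second identity of Lemma~\ref{Lemme dans CT=1}, while for $I=J$ the first identity of that lemma gives $\langle\delta(\nu\wedge E_{I});E_{I}\rangle=(\gamma_{1}+\cdots+\gamma_{n-1})-\sum_{i\in I}\gamma_{i}=\sum_{j\notin I}\gamma_{j}$. Thus $\langle K^{\nu}(E_{I});E_{J}\rangle=\delta_{IJ}\sum_{j\notin I}\gamma_{j}$, and substituting into the bilinear expansion yields exactly \eqref{eq:K nu in terms of gamma i}.

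For \eqref{eq:L nu in terms of gamma i} I would invoke the Hodge duality between $L^{\nu}$ and $K^{\nu}$ (\cite{Csato-Dac-Kneuss(livre)} Lemma~5.3), exactly as in the proof of Lemma~\ref{lemma:L nu and K nu symmetric}. Applying $\ast$ sends $\alpha,\beta\in\Lambda^{k}$ to $\ast\alpha,\ast\beta\in\Lambda^{n-k}$ and sends each $E_{j_{1}\cdots j_{k}}$, up to a sign, to $\nu\wedge E_{J^{c}}$, where $J^{c}$ is the complementary index set in $\{1,\cdots,n-1\}$, of size $(n-k)-1$; correspondingly $\langle L^{\nu}(\nu\wedge\alpha);\nu\wedge\beta\rangle$ becomes a quantity of the form $\langle K^{\nu}(\nu\,\lrcorner\,\ast\alpha);\nu\,\lrcorner\,\ast\beta\rangle$ on $\Lambda^{n-k}$. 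Under the complementation $J\mapsto J^{c}$ the weight $\sum_{j\in J}\gamma_{j}$ appearing in \eqref{eq:L nu in terms of gamma i} becomes $\sum_{j\notin J^{c}}\gamma_{j}$, which is precisely the weight in \eqref{eq:K nu in terms of gamma i} for the $(n-k)$-form case. As the formula is bilinear in $\alpha,\beta$, the signs introduced by $\ast$ occur in matched pairs and cancel, so \eqref{eq:L nu in terms of gamma i} follows from the already established \eqref{eq:K nu in terms of gamma i}.

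The genuinely computational content — expressing $\langle\delta(\nu\wedge E_{I});E_{J}\rangle$ through sums of principal curvatures — has been isolated in Lemma~\ref{Lemme dans CT=1}, so the present argument is essentially organizational: an adapted orthonormal change of basis followed by the diagonalization of a symmetric bilinear form. The step I expect to require the most care is the duality bookkeeping for \eqref{eq:L nu in terms of gamma i}: one must pin down the exact sign in $\ast E_{j_{1}\cdots j_{k}}=\pm\,\nu\wedge E_{J^{c}}$ and verify that the index complementation correctly matches the two curvature weights, since a slip there would break the correspondence between the two displayed identities. A secondary point to check explicitly is that the pointwise action of $K^{\nu}$ in its form argument (Remark~\ref{Remarque: permutation de nu avec L et K}(iii)) really does permit replacing the constant forms $\alpha,\beta$ by the frame fields $\nu\wedge E_{I}$ needed to apply Lemmas~\ref{Lemme 1 algebrique} and \ref{Lemme dans CT=1}.
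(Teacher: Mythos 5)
Your proposal is correct and follows essentially the same route as the paper's own proof: reduce by bilinearity to the adapted orthonormal basis, compute $\left\langle K^{\nu}\left(  E_{I}\right)  ;E_{J}\right\rangle$ via the polarization identity of Lemma \ref{Lemme 1 algebrique} combined with the symmetry from Lemma \ref{lemma:L nu and K nu symmetric} and the curvature computations of Lemma \ref{Lemme dans CT=1}, and then obtain (\ref{eq:L nu in terms of gamma i}) from (\ref{eq:K nu in terms of gamma i}) by Hodge duality. The points you flag for care (the pointwise action of $K^{\nu}$ in its form argument and the sign bookkeeping under $\ast$) are exactly the ones the paper handles implicitly, and your treatment of them is sound.
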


\begin{remark}
When $k=n-1,$ the first formula reads as%
\[
\left\langle L^{\nu}\left(  \nu\wedge\alpha\right)  ;\nu\wedge\beta
\right\rangle =\left\langle \alpha;E_{1}\wedge\cdots\wedge E_{n-1}%
\right\rangle \left\langle \beta;E_{1}\wedge\cdots\wedge E_{n-1}\right\rangle
\sum_{j=1}^{n-1}\gamma_{j}%
\]
while, when $k=1,$ the second one reads as%
\[
\left\langle K^{\nu}\left(  \nu\,\lrcorner\,\alpha\right)  ;\nu\,\lrcorner
\,\beta\right\rangle =\left\langle \alpha;\nu\right\rangle \left\langle
\beta;\nu\right\rangle \sum_{j=1}^{n-1}\gamma_{j}\,.
\]

\end{remark}

\begin{proof}
We only prove the second statement. The first one can be deduced from the
other one by duality, using for instance \cite{Csato-Dac-Kneuss(livre)} Lemma
5.3. Since both sides of the equation are bilinear in $\left(  \alpha
,\beta\right)  $ it is sufficient to show the identity for basis vectors of
$\Lambda^{k}\left(  \mathbb{R}^{n}\right)  .$ We choose basis vectors of the
type%
\[
\text{(a)}\quad\alpha=E_{i_{1}}\wedge\cdots\wedge E_{i_{k}}\,,\quad
\beta=E_{j_{1}}\wedge\cdots\wedge E_{j_{k}}%
\]
or of the type%
\[
\text{(b)}\quad\alpha=\nu\wedge E_{i_{1}}\wedge\cdots\wedge E_{i_{k-1}%
}\,,\quad\beta=\nu\wedge E_{j_{1}}\wedge\cdots\wedge E_{j_{k-1}}\,.
\]
If either one of $\alpha$ or $\beta$ is of the type (a), then one immediately
obtains that both sides of (\ref{eq:K nu in terms of gamma i}) are zero and
the equation is trivially satisfied (see Lemma \ref{Lemme 0 algebrique} and
(\ref{equation:decomposing omega by nu})). So we only need to consider the
case that $\alpha$ and $\beta$ are both of type (b). We distinguish two cases.
We also let $x_{0}\in\Sigma.$\smallskip

\emph{Case 1:} $\left(  i_{1},\cdots,i_{k-1}\right)  =\left(  j_{1}%
,\cdots,j_{k-1}\right)  .$ In that case the right hand side of
(\ref{eq:K nu in terms of gamma i}) is equal to%
\[
\sum_{1\leq l_{1}<\cdots<l_{k-1}\leq n-1}\left\langle \alpha;\nu\wedge
E_{l_{1}\cdots l_{k-1}}\right\rangle \left\langle \beta;\nu\wedge
E_{l_{1}\cdots l_{k-1}}\right\rangle \sum_{j\notin\left\{  l_{1}%
,\cdots,l_{k-1}\right\}  }\gamma_{j}=\sum_{j\notin\left\{  i_{1}%
,\cdots,i_{k-1}\right\}  }\gamma_{j}\,.
\]
We now use Lemma \ref{Lemme 1 algebrique} with $\lambda=E_{i_{1}}\wedge
\cdots\wedge E_{i_{k-1}}.$ We can assume that $\left\{  \nu,E_{1}%
,\ldots,E_{n-1}\right\}  $ are extended to an orthonormal basis in a
neighborhood of $x_{0}\,.$ Note that from
(\ref{equation:decomposing omega by nu}) and Lemma \ref{Lemme 0 algebrique} we
get that $\nu\,\lrcorner\,\left(  \nu\wedge\lambda\right)  =\lambda-\nu
\wedge\left(  \nu\,\lrcorner\,\lambda\right)  =\lambda.$ So Lemma
\ref{Lemme 1 algebrique} gives%
\begin{equation}
\left\langle K^{\nu}\left(  \nu\,\lrcorner\,\alpha\right)  ;\nu\,\lrcorner
\,\alpha\right\rangle =\left\langle K^{\nu}\left(  \lambda\right)
;\lambda\right\rangle =\left\langle \delta\left(  \nu\wedge\lambda\right)
;\lambda\right\rangle . \label{Equation 4bis dans CT=1}%
\end{equation}
We get the result appealing to Lemma \ref{Lemme dans CT=1} namely%
\[
\left\langle \delta\left(  \nu\wedge\lambda\right)  ;\lambda\right\rangle
=\left(  \gamma_{1}+\cdots+\gamma_{n-1}\right)  -\left(  \gamma_{i_{1}}%
+\cdots+\gamma_{i_{k-1}}\right)  =\sum_{j\notin\left\{  i_{1},\cdots
,i_{k-1}\right\}  }\gamma_{j}\,.
\]

\emph{Case 2:} $(i_{1},\cdots,i_{k-1})\neq(j_{1},\cdots,j_{k-1}).$ The right
hand side of (\ref{eq:K nu in terms of gamma i}) is now $0.$ So we have to
show that%
\[
\left\langle K^{\nu}\left(  \nu\,\lrcorner\,\alpha\right)  ;\nu\,\lrcorner
\,\beta\right\rangle =0
\]
Let $\lambda=E_{i_{1}}\wedge\cdots\wedge E_{i_{k-1}}$ and $\mu=E_{j_{1}}%
\wedge\cdots\wedge E_{j_{k-1}}.$ It follows from Lemma
\ref{Lemme 1 algebrique} (using (\ref{equation:decomposing omega by nu}) as in
Case 1) that%
\[
\left\langle K^{\nu}\left(  \nu\,\lrcorner\,\alpha\right)  ;\nu\,\lrcorner
\,\beta\right\rangle +\left\langle K^{\nu}\left(  \nu\,\lrcorner
\,\beta\right)  ;\nu\,\lrcorner\,\alpha\right\rangle =\left\langle
\delta\left(  \nu\wedge\lambda\right)  ;\mu\right\rangle +\left\langle
\delta\left(  \nu\wedge\mu\right)  ;\lambda\right\rangle -\left\langle
\nabla\left(  \alpha\,\lrcorner\,\beta\right)  ;\nu\right\rangle .
\]
Recall that $\left\{  \nu,E_{1},\ldots,E_{n-1}\right\}  $ are extended to an
orthonormal basis in a neighborhood of $x_{0}$ and therefore $\nabla\left(
\alpha\,\lrcorner\,\beta\right)  =0.$ Thus it follows from Lemmas
\ref{Lemme dans CT=1} and \ref{lemma:L nu and K nu symmetric} that%
\[
\left\langle K^{\nu}\left(  \nu\,\lrcorner\,\alpha\right)  ;\nu\,\lrcorner
\,\beta\right\rangle =\frac{1}{2}\left(  \left\langle \delta\left(  \nu
\wedge\lambda\right)  ;\mu\right\rangle +\left\langle \delta\left(  \nu
\wedge\mu\right)  ;\lambda\right\rangle \right)  =0,
\]
which proves the claim of the present case.\smallskip
\end{proof}

\subsection{Proof of the main theorem}

For the equivalence (i) $\Leftrightarrow$ (iv), we give below a proof which is
elementary and self-contained. A second proof can be obtained from Theorem
\ref{piecewisesmoothformula} and the remark following it. Still another proof,
more in the language of differential geometry, can be given using Theorem
\ref{thm:fund identity with curvatures on manifold}. These two other proofs
are independent of the one given below and of the previous
analysis..\smallskip

\begin{proof}
(Theorem \ref{Theoreme equiv pour CT}). We know from Theorem 5.7 in
\cite{Csato-Dac-Kneuss(livre)} (see also \cite{Csato-Dac 2102} or Theorem
\ref{thm:fund identity with curvatures} for a slightly different way of
expressing the identity) that, for every $\omega\in W_{T}^{1,2}\left(
\Omega;\Lambda^{k}\right)  \cup W_{N}^{1,2}\left(  \Omega;\Lambda^{k}\right)
,$%
\begin{equation}%
{\displaystyle\int_{\Omega}}
\left(  \left\vert d\omega\right\vert ^{2}+\left\vert \delta\omega\right\vert
^{2}-\left\vert \nabla\omega\right\vert ^{2}\right)  =%
{\displaystyle\int_{\partial\Omega}}
\left(  \left\langle L^{\nu}\left(  \nu\wedge\omega\right)  ;\nu\wedge
\omega\right\rangle +\left\langle K^{\nu}\left(  \nu\,\lrcorner\,\omega
\right)  ;\nu\,\lrcorner\,\omega\right\rangle \right)
.\label{Equation 1 dans CT=1}%
\end{equation}

\emph{Step 1:} \emph{(i) }$\Rightarrow$ \emph{(ii)}. Assume that $C_{T}\left(
\Omega,k\right)  =1.$ This means that, for every $\omega\in W_{T}^{1,2}\left(
\Omega;\Lambda^{k}\right)  ,$%
\[
0\leq\left\Vert d\omega\right\Vert ^{2}+\left\Vert \delta\omega\right\Vert
^{2}-\left\Vert \nabla\omega\right\Vert ^{2}+\left\Vert \omega\right\Vert
^{2}=\left\Vert \omega\right\Vert ^{2}+K\left(  \omega\right)
\]
where%
\[
K\left(  \omega\right)  =%
{\displaystyle\int_{\partial\Omega}}
\widetilde{K}\left(  \omega\right)  =%
{\displaystyle\int_{\partial\Omega}}
\left\langle K^{\nu}\left(  \nu\,\lrcorner\,\omega\right)  ;\nu\,\lrcorner
\,\omega\right\rangle .
\]
Next let $\varphi\in W_{T}^{1,2}\left(  \Omega;\Lambda^{k}\right)  $ be such
that $\varphi=\omega$ on $\partial\Omega$ and $\varphi\equiv0$ in $\Omega$
outside an $\epsilon-$neighborhood of $\partial\Omega.$ Note that, since
$\varphi=\omega$ on $\partial\Omega,$ then%
\[
K^{\nu}\left(  \nu\,\lrcorner\,\varphi\right)  =K^{\nu}\left(  \nu
\,\lrcorner\,\omega\right)  \quad\text{on }\partial\Omega.
\]
We thus have, by (\ref{Equation 1 dans CT=1}) and since $\varphi=\omega$ on
$\partial\Omega,$%
\[
0\leq\left\Vert \varphi\right\Vert ^{2}+K\left(  \varphi\right)  =\left\Vert
\varphi\right\Vert ^{2}+K\left(  \omega\right)  .
\]
Since $\left\Vert \varphi\right\Vert ^{2}$ is as small as we want, we deduce
that%
\begin{equation}
K\left(  \omega\right)  =%
{\displaystyle\int_{\partial\Omega}}
\widetilde{K}\left(  \omega\right)  =%
{\displaystyle\int_{\partial\Omega}}
\left\langle K^{\nu}\left(  \nu\,\lrcorner\,\omega\right)  ;\nu\,\lrcorner
\,\omega\right\rangle \geq0,\quad\forall\,\omega\in W_{T}^{1,2}\left(
\Omega;\Lambda^{k}\right)  .\label{Equation 0 dans CT=1}%
\end{equation}
We now prove (ii) from the above inequality. Choose $\omega\in W_{T}%
^{1,2}\left(  \Omega;\Lambda^{k}\right)  ,$ $\psi\in C^{\infty}\left(
\overline{\Omega}\right)  $ and $\alpha=\psi\,\omega\in W_{T}^{1,2}\left(
\Omega;\Lambda^{k}\right)  .$ Invoking (\ref{Equation 0 dans CT=1}) we find
that%
\[
0\leq K\left(  \alpha\right)  =%
{\displaystyle\int_{\partial\Omega}}
\widetilde{K}\left(  \alpha\right)  =%
{\displaystyle\int_{\partial\Omega}}
\psi^{2}\,\widetilde{K}\left(  \omega\right)  .
\]
Since $\psi$ is arbitrary, we have the claim, i.e. $\widetilde{K}\left(
\omega\right)  \geq0.$\smallskip

\emph{Step 2:} \emph{(ii) }$\Rightarrow$ \emph{(iii)}. From
(\ref{Equation 1 dans CT=1}) we have%
\[
\left\Vert d\omega\right\Vert ^{2}+\left\Vert \delta\omega\right\Vert
^{2}-\left\Vert \nabla\omega\right\Vert ^{2}=K\left(  \omega\right)  =%
{\displaystyle\int_{\partial\Omega}}
\widetilde{K}\left(  \omega\right)
\]
and thus the result, since $K\left(  \omega\right)  \geq0$ (because
$\widetilde{K}\left(  \omega\right)  \geq0$).\smallskip

\emph{Step 3:} \emph{(iii) }$\Rightarrow$ \emph{(i)}. This is trivial, once
coupled with Proposition \ref{Proposition CT et CN plus grand que 1}%
.\smallskip

\emph{Step 4:} \emph{(ii) }$\Rightarrow$ \emph{(iv)}. We choose in (ii), for
$1\leq i_{1}<\cdots<i_{k-1}\leq n-1,$%
\[
\omega=\nu\wedge\lambda\quad\text{with}\quad\lambda=E_{i_{1}}\wedge
\cdots\wedge E_{i_{k-1}}%
\]
From the assumption and the second conclusion in Lemma
\ref{lemma:L nu equal bilinear form with curvatures and dual version}, we then
obtain%
\[
0\leq\left\langle K^{\nu}\left(  \nu\,\lrcorner\,\omega\right)  ;\nu
\,\lrcorner\,\omega\right\rangle =\sum_{j\notin\left\{  i_{1},\cdots
,i_{k-1}\right\}  }\gamma_{j}\,.
\]

\emph{Step 5:} \emph{(iv) }$\Rightarrow$ \emph{(ii)}. This follows from the
second conclusion of Lemma
\ref{lemma:L nu equal bilinear form with curvatures and dual version}%
.\smallskip

\emph{Step 6:} \emph{(iii) }$\Rightarrow$ \emph{(v)}. The fact that the
supremum is not attained follows from (iii), since, for every $\omega\in
W_{T}^{1,2}\left(  \Omega;\Lambda^{k}\right)  ,$%
\[
\left\Vert \nabla\omega\right\Vert ^{2}\leq\left\Vert d\omega\right\Vert
^{2}+\left\Vert \delta\omega\right\Vert ^{2}\leq\left\Vert d\omega\right\Vert
^{2}+\left\Vert \delta\omega\right\Vert ^{2}+\left\Vert \omega\right\Vert ^{2}%
\]
hence the result.\smallskip

\emph{Step 7:} \emph{(v) }$\Rightarrow$ \emph{(i).} In order to prove the
statement, we show that if $C_{T}>1,$ then there exists a maximizer. We divide
the proof into three substeps.\smallskip

\emph{Step 7.1.} Let $\omega_{s}\in W_{T}^{1,2}\left(  \Omega;\Lambda
^{k}\right)  \setminus\left\{  0\right\}  $ be a maximizing sequence (and
hence $\omega_{s}$ is not a constant form), i.e.%
\[
\lim_{s\rightarrow\infty}\frac{\left\Vert \nabla\omega_{s}\right\Vert ^{2}%
}{\left\Vert d\omega_{s}\right\Vert ^{2}+\left\Vert \delta\omega
_{s}\right\Vert ^{2}+\left\Vert \omega_{s}\right\Vert ^{2}}=C_{T}\,.
\]
Without loss of generality, up to replacing $\omega_{s}$ by $\omega
_{s}/\left\Vert \nabla\omega_{s}\right\Vert ,$ we can assume that $\left\Vert
\nabla\omega_{s}\right\Vert =1$ and hence%
\begin{equation}
\lim_{s\rightarrow\infty}\left[  \left\Vert d\omega_{s}\right\Vert
^{2}+\left\Vert \delta\omega_{s}\right\Vert ^{2}+\left\Vert \omega
_{s}\right\Vert ^{2}\right]  =\frac{1}{C_{T}}<1.
\label{Equation1 dans maximiseur}%
\end{equation}
In particular $\left\Vert \omega_{s}\right\Vert $ is bounded and thus, up to a
subsequence that we do not relabel, there exists $\omega\in W_{T}^{1,2}\left(
\Omega;\Lambda^{k}\right)  $ such that%
\[
\omega_{s}\rightharpoonup\omega\quad\text{in }W^{1,2}.
\]
We prove in the next substeps that $\omega$ is a maximizer.\smallskip

\emph{Step 7.2.} We first show that $\omega\neq0.$ Suppose, for the sake of
contradiction, that $\omega=0,$ then from (\ref{Equation1 dans maximiseur}) we
get%
\begin{equation}
\lim_{s\rightarrow\infty}\left[  \left\Vert d\omega_{s}\right\Vert
^{2}+\left\Vert \delta\omega_{s}\right\Vert ^{2}\right]  =\frac{1}{C_{T}}<1.
\label{Equation2 dans maximiseur}%
\end{equation}
From (\ref{Equation 1 dans CT=1}), we infer that there exists $c_{1}%
=c_{1}\left(  \Omega\right)  $ such that%
\[
\left\Vert d\omega_{s}\right\Vert ^{2}+\left\Vert \delta\omega_{s}\right\Vert
^{2}=1+%
{\displaystyle\int_{\partial\Omega}}
\left\langle K^{\nu}\left(  \nu\,\lrcorner\,\omega_{s}\right)  ;\nu
\,\lrcorner\,\omega_{s}\right\rangle \geq1-c_{1}%
{\displaystyle\int_{\partial\Omega}}
\left\vert \omega_{s}\right\vert ^{2}.
\]
Since (cf. Proposition 5.15 in \cite{Csato-Dac-Kneuss(livre)}) there exists
$c_{2}=c_{2}\left(  \Omega\right)  $ such that for every $\epsilon>0$%
\[%
{\displaystyle\int_{\partial\Omega}}
\left\vert \omega_{s}\right\vert ^{2}\leq\epsilon\left\Vert \nabla\omega
_{s}\right\Vert ^{2}+\frac{c_{2}}{\epsilon}\left\Vert \omega_{s}\right\Vert
^{2}=\epsilon+\frac{c_{2}}{\epsilon}\left\Vert \omega_{s}\right\Vert ^{2}%
\]
we deduce that%
\[
\left\Vert d\omega_{s}\right\Vert ^{2}+\left\Vert \delta\omega_{s}\right\Vert
^{2}\geq1-c_{1}\epsilon-\frac{c_{1}c_{2}}{\epsilon}\left\Vert \omega
_{s}\right\Vert ^{2}.
\]
Letting $s\rightarrow\infty$ we find%
\[
\lim_{s\rightarrow\infty}\left[  \left\Vert d\omega_{s}\right\Vert
^{2}+\left\Vert \delta\omega_{s}\right\Vert ^{2}\right]  \geq1-c_{1}\epsilon
\]
and, since $\epsilon$ is arbitrary, we find a contradiction with
(\ref{Equation2 dans maximiseur}).\smallskip

\emph{Step 7.3.} We may now conclude. In the sequel we will have to pass
several times to subsequences in order that all limits are true limits but,
for the sake of not burdening the notations, we do not relabel these
subsequences.\smallskip

\emph{(i)} We have, recalling that $\omega\in W_{T}^{1,2}\left(
\Omega;\Lambda^{k}\right)  ,$%
\begin{align*}
1  &  =\left\Vert \nabla\omega_{s}\right\Vert ^{2}=\left\Vert \nabla\left(
\omega_{s}-\omega\right)  \right\Vert ^{2}+\left\Vert \nabla\omega\right\Vert
^{2}+2\int_{\Omega}\left\langle \nabla\left(  \omega_{s}-\omega\right)
;\nabla\omega\right\rangle \smallskip\\
&  \leq\left\Vert \nabla\left(  \omega_{s}-\omega\right)  \right\Vert
^{2}+C_{T}\left(  \left\Vert d\omega\right\Vert _{L^{2}}^{2}+\left\Vert
\delta\omega\right\Vert _{L^{2}}^{2}+\left\Vert \omega\right\Vert _{L^{2}}%
^{2}\right)  +2\int_{\Omega}\left\langle \nabla\left(  \omega_{s}%
-\omega\right)  ;\nabla\omega\right\rangle .
\end{align*}
Since $\omega_{s}\rightharpoonup\omega$ in $W^{1,2},$ we find that%
\[
1=\lim_{s\rightarrow\infty}\left\Vert \nabla\left(  \omega_{s}-\omega\right)
\right\Vert ^{2}+\left\Vert \nabla\omega\right\Vert ^{2}\leq\lim
_{s\rightarrow\infty}\left\Vert \nabla\left(  \omega_{s}-\omega\right)
\right\Vert ^{2}+C_{T}\left(  \left\Vert d\omega\right\Vert _{L^{2}}%
^{2}+\left\Vert \delta\omega\right\Vert _{L^{2}}^{2}+\left\Vert \omega
\right\Vert _{L^{2}}^{2}\right)  .
\]

\emph{(ii)} Since $\omega_{s}-\omega\in W_{T}^{1,2}\left(  \Omega;\Lambda
^{k}\right)  ,$ we have%
\begin{align*}
\left\Vert \nabla\left(  \omega_{s}-\omega\right)  \right\Vert ^{2}  &  \leq
C_{T}\left(  \left\Vert d\left(  \omega_{s}-\omega\right)  \right\Vert
_{L^{2}}^{2}+\left\Vert \delta\left(  \omega_{s}-\omega\right)  \right\Vert
_{L^{2}}^{2}+\left\Vert \left(  \omega_{s}-\omega\right)  \right\Vert _{L^{2}%
}^{2}\right)  \smallskip\\
&  =C_{T}\left(  \left\Vert d\omega_{s}\right\Vert ^{2}+\left\Vert
\delta\omega_{s}\right\Vert ^{2}+\left\Vert \omega_{s}\right\Vert
^{2}+\left\Vert d\omega\right\Vert _{L^{2}}^{2}+\left\Vert \delta
\omega\right\Vert _{L^{2}}^{2}+\left\Vert \omega\right\Vert _{L^{2}}%
^{2}\right)  \smallskip\\
&  -C_{T}\left(  \int_{\Omega}2\left[  \left\langle d\omega_{s};d\omega
\right\rangle +\left\langle \delta\omega_{s};\delta\omega\right\rangle
+\left\langle \omega_{s};\omega\right\rangle \right]  \right)  .
\end{align*}
Passing to the limit, recalling (\ref{Equation1 dans maximiseur}) and that
$\omega_{s}\rightharpoonup\omega$ in $W^{1,2},$ we get%
\[
\lim_{s\rightarrow\infty}\left\Vert \nabla\left(  \omega_{s}-\omega\right)
\right\Vert ^{2}\leq1-C_{T}\left(  \left\Vert d\omega\right\Vert _{L^{2}}%
^{2}+\left\Vert \delta\omega\right\Vert _{L^{2}}^{2}+\left\Vert \omega
\right\Vert _{L^{2}}^{2}\right)  .
\]

\emph{(iii)} Combining (i) and (ii) we obtain%
\begin{align*}
1  &  =\lim_{s\rightarrow\infty}\left\Vert \nabla\left(  \omega_{s}%
-\omega\right)  \right\Vert ^{2}+\left\Vert \nabla\omega\right\Vert
^{2}\smallskip\\
&  \leq\lim_{s\rightarrow\infty}\left\Vert \nabla\left(  \omega_{s}%
-\omega\right)  \right\Vert ^{2}+C_{T}\left(  \left\Vert d\omega\right\Vert
_{L^{2}}^{2}+\left\Vert \delta\omega\right\Vert _{L^{2}}^{2}+\left\Vert
\omega\right\Vert _{L^{2}}^{2}\right)  \leq1
\end{align*}
which implies that%
\[
\left\Vert \nabla\omega\right\Vert ^{2}=C_{T}\left(  \left\Vert d\omega
\right\Vert _{L^{2}}^{2}+\left\Vert \delta\omega\right\Vert _{L^{2}}%
^{2}+\left\Vert \omega\right\Vert _{L^{2}}^{2}\right)
\]
as wished.\smallskip

\emph{Step 8:} \emph{(i) }$\Rightarrow$ \emph{(vi).} Since (i) (and thus
(iii)) holds, we find, for $\omega\in W_{T}^{1,2}\left(  t\,\Omega;\Lambda
^{k}\right)  $ and setting $\omega\left(  x\right)  =u\left(  x/t\right)  ,$%
\begin{align*}
\left\Vert \nabla\omega\right\Vert _{L^{2}\left(  t\,\Omega\right)  }^{2}  &
=\int_{t\,\Omega}\left\vert \nabla\omega\left(  y\right)  \right\vert
^{2}dy=t^{n-2}\int_{\Omega}\left\vert \nabla u\left(  x\right)  \right\vert
^{2}dx=t^{n-2}\left\Vert \nabla u\right\Vert _{L^{2}\left(  \Omega\right)
}^{2}\smallskip\\
&  \leq t^{n-2}\left\Vert du\right\Vert _{L^{2}\left(  \Omega\right)  }%
^{2}+t^{n-2}\left\Vert \delta u\right\Vert _{L^{2}\left(  \Omega\right)  }%
^{2}=\left\Vert d\omega\right\Vert _{L^{2}\left(  t\,\Omega\right)  }%
^{2}+\left\Vert \delta\omega\right\Vert _{L^{2}\left(  t\,\Omega\right)  }^{2}%
\end{align*}
which shows that $C_{T}\left(  t\,\Omega,k\right)  =C_{T}\left(
\Omega,k\right)  =1.$\smallskip

\emph{Step 9:} \emph{(vi) }$\Rightarrow$ \emph{(i).} Without loss of
generality we can assume that $t<1.$ We reason by contradiction and assume
that $C_{T}\left(  \Omega,k\right)  >1.$ Invoking (v) we have that there
exists $u\in W_{T}^{1,2}\left(  \Omega;\Lambda^{k}\right)  $ such that%
\[
C_{T}\left(  \Omega,k\right)  =\frac{\left\Vert \nabla u\right\Vert
_{L^{2}\left(  \Omega\right)  }^{2}}{\left\Vert du\right\Vert _{L^{2}\left(
\Omega\right)  }^{2}+\left\Vert \delta u\right\Vert _{L^{2}\left(
\Omega\right)  }^{2}+\left\Vert u\right\Vert _{L^{2}\left(  \Omega\right)
}^{2}}\,.
\]
Setting $\omega\left(  x\right)  =u\left(  x/t\right)  ,$ we obtain that
$\omega\in W_{T}^{1,2}\left(  t\,\Omega;\Lambda^{k}\right)  $ and%
\[
C_{T}\left(  \Omega,k\right)  =\frac{\left\Vert \nabla\omega\right\Vert
_{L^{2}\left(  t\,\Omega\right)  }^{2}}{\left\Vert d\omega\right\Vert
_{L^{2}\left(  t\,\Omega\right)  }^{2}+\left\Vert \delta\omega\right\Vert
_{L^{2}\left(  t\,\Omega\right)  }^{2}+t^{-2}\left\Vert \omega\right\Vert
_{L^{2}\left(  t\,\Omega\right)  }^{2}}\,.
\]
Since $t<1,$ we get%
\[
C_{T}\left(  \Omega,k\right)  <\frac{\left\Vert \nabla\omega\right\Vert
_{L^{2}\left(  t\,\Omega\right)  }^{2}}{\left\Vert d\omega\right\Vert
_{L^{2}\left(  t\,\Omega\right)  }^{2}+\left\Vert \delta\omega\right\Vert
_{L^{2}\left(  t\,\Omega\right)  }^{2}+\left\Vert \omega\right\Vert
_{L^{2}\left(  t\,\Omega\right)  }^{2}}\leq C_{T}\left(  t\,\Omega,k\right)
\]
which is our claim.\smallskip
\end{proof}

Theorem \ref{Theoreme equiv pour CT} (combined with Remark
\ref{Remarque apres Thm equiv pour CT}) has as an immediate corollary the following.

\begin{corollary}
Let $\Omega\subset\mathbb{R}^{n}$ be a bounded open smooth set and $k=1.$
Then\smallskip

\emph{(i)} $C_{T}\left(  \Omega,1\right)  =1$ if and only if the mean
curvature of $\partial\Omega$ is non-negative;\smallskip

\emph{(ii)} $C_{N}\left(  \Omega,1\right)  =1$ if and only if $\Omega$ is convex.
\end{corollary}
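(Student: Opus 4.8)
The plan is to read off both statements directly from Theorem \ref{Theoreme equiv pour CT} and Remark \ref{Remarque apres Thm equiv pour CT} by specializing to $k=1$, and then to translate the notion of $j$-convexity into the classical geometric conditions via Definition \ref{Definition k-convexite} and the remark following it. The corollary is a pure specialization, so the work consists entirely in matching the abstract hypotheses to the stated geometric ones.

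First I would establish (i). Taking $k=1$ in the equivalence (i) $\Leftrightarrow$ (iv) of Theorem \ref{Theoreme equiv pour CT}, one gets that $C_T(\Omega,1)=1$ holds if and only if $\Omega$ is $(n-1)$-convex. It then remains only to recall that, by Definition \ref{Definition k-convexite} with $k=n-1$, the single inequality defining $(n-1)$-convexity is
\[
\gamma_1+\cdots+\gamma_{n-1}\geq 0,
\]
which is precisely the requirement that the mean curvature of $\partial\Omega$ be non-negative, as observed in the remark following that definition. This yields (i) immediately.

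Next I would establish (ii). By part (i) of Remark \ref{Remarque apres Thm equiv pour CT}, applying the Hodge $\ast$ operation to the main theorem gives the dual equivalence, namely that $C_N(\Omega,k)=1$ if and only if $\Omega$ is $k$-convex. Specializing to $k=1$, one obtains that $C_N(\Omega,1)=1$ if and only if $\Omega$ is $1$-convex, i.e. $\gamma_i\geq 0$ for every $i=1,\ldots,n-1$. The final step is to identify $1$-convexity with convexity: the easy direction, that a convex set has all principal curvatures non-negative, is elementary, while for the converse — that a bounded open smooth set with everywhere non-negative principal curvatures is convex — I would cite the Chern-Lashof theorem \cite{Chern-Lashof 1958} (see also \cite{Alexander 1977}), exactly as recorded in the remark following Definition \ref{Definition k-convexite}. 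Combining these gives (ii).

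I do not expect any genuine obstacle here, since everything reduces to earlier results; the only non-trivial input is the geometric fact, invoked but not reproved, that non-negativity of all principal curvatures forces convexity of a smooth bounded domain.
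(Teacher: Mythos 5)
Your proposal is correct and is exactly the derivation the paper intends: the paper states the corollary as an immediate consequence of Theorem \ref{Theoreme equiv pour CT} combined with Remark \ref{Remarque apres Thm equiv pour CT}, with the identification of $(n-1)$-convexity with non-negative mean curvature and of $1$-convexity with convexity (via Chern--Lashof) already recorded in the remark following Definition \ref{Definition k-convexite}. No gaps.
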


\section{Some examples}

We now deal with some special cases where we can make $C_{T},C_{N}$
arbitrarily large.

\begin{proposition}
\label{Proposition C arbitr grand}Let $1\leq k\leq n-1.$ Then there exists a
set $\Omega_{k}\subset B$ (a fixed ball of $\mathbb{R}^{n}$) such that
$C_{T}\left(  \Omega_{k},k\right)  ,C_{N}\left(  \Omega_{k},n-k\right)  $ are
arbitrarily large.
\end{proposition}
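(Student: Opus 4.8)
The plan is to produce, for each $1\le k\le n-1$, a single smooth domain (depending on a small parameter $\rho$) inside a fixed ball on which one test form makes the Rayleigh quotient in \eqref{definition de C N} as large as we please. First I would reduce the two quantities to one: by Proposition~\ref{Proposition CT et CN plus grand que 1} we have $C_T\left(\Omega,k\right)=C_N\left(\Omega,n-k\right)$, so it suffices to exhibit $\Omega_k\subset B$ with $C_N\left(\Omega_k,n-k\right)$ arbitrarily large, and the equality then forces $C_T\left(\Omega_k,k\right)$ to be large as well. The mechanism I would exploit is that $C_N$ is bounded below by $\|\nabla\omega\|^2/(\|d\omega\|^2+\|\delta\omega\|^2+\|\omega\|^2)$ for any single $\omega\in W_N^{1,2}$, so it is enough to concentrate a nearly harmonic field, i.e. one with $d\omega$ and $\delta\omega$ controlled but $\|\nabla\omega\|/\|\omega\|$ blowing up.

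For the construction I would split $\mathbb{R}^n=\mathbb{R}^{k-1}_y\times\mathbb{R}^{m}_z$ with $m=n-k+1$, and use the Gauss (solid angle) form
\[
\sigma=|z|^{-m}\sum_{j=1}^{m}(-1)^{j-1}z_j\,dz^1\wedge\cdots\wedge\widehat{dz^j}\wedge\cdots\wedge dz^m\in\Lambda^{n-k},
\]
which is closed and coclosed on $\{z\neq0\}$, satisfies $|\sigma|=|z|^{-(n-k)}$, and is tangential to every sphere $\{|z|=\mathrm{const}\}$, hence obeys $\nu\,\lrcorner\,\sigma=0$ there. Its singular set is the $(k-1)$-plane $\{z=0\}$. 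I would then take $B=B_1$, let $T_\rho$ be a smoothly rounded tubular neighborhood of radius $\rho$ of the disk $\{z=0\}\cap B$, and set $\Omega_k=B\setminus\overline{T_\rho}$, a smooth domain inside $B$ whose boundary near $\{z=0\}$ is the cylinder $\{|z|=\rho\}$. As the competitor I would use $\omega=\chi\,\sigma$, where $\chi$ is a cutoff equal to $1$ on a neighborhood of $\{|z|=\rho\}$ away from $\partial B$ and vanishing before reaching the rounded part of $\partial T_\rho$ and the sphere $\partial B$.

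The estimates are then straightforward. Because $\sigma$ is harmonic, $d\omega=d\chi\wedge\sigma$ and $\delta\omega=-\nabla\chi\,\lrcorner\,\sigma$ are supported in the fixed region where $\chi$ varies, on which $\sigma=O(1)$; hence $\|d\omega\|^2+\|\delta\omega\|^2=O(1)$ uniformly in $\rho$. On the other hand, integrating $|\sigma|^2=|z|^{-2(n-k)}$ and $|\nabla\sigma|^2\sim|z|^{-2(n-k)-2}$ over $\{|z|>\rho\}$ (with $y$ ranging over a set of bounded measure) gives, as $\rho\to0$,
\[
\|\omega\|^2\sim\begin{cases}\rho^{-(n-k-1)}&\text{if }n-k\ge2,\\ \log(1/\rho)&\text{if }n-k=1,\end{cases}\qquad \|\nabla\omega\|^2\sim\rho^{-(n-k+1)}.
\]
Therefore the Rayleigh quotient behaves like $\rho^{-2}$ (respectively like $\rho^{-2}/\log(1/\rho)$), which tends to $+\infty$. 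Choosing $\rho$ small enough makes $C_N\left(\Omega_k,n-k\right)$ exceed any prescribed bound, and with it $C_T\left(\Omega_k,k\right)$.

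The main obstacle, and the only point requiring care, is geometric rather than analytic: one must round $T_\rho$ so that $\Omega_k$ is genuinely $C^2$ and still contained in $B$, and verify the normal boundary condition $\nu\,\lrcorner\,\omega=0$ on all of $\partial\Omega_k$. The latter splits into the cylindrical part $\{|z|=\rho\}$, where it holds because $\sigma$ is tangential to the $z$-spheres, and the remaining rounded and outer part, where it holds trivially because $\omega=\chi\sigma$ has been arranged to vanish. Checking $\nu\,\lrcorner\,\sigma=0$ on $\{|z|=\rho\}$ reduces to the identity $\widehat{z}\,\lrcorner\,\sigma=0$ for the radial field $\widehat{z}=z/|z|$, which is the standard fact that $\sigma$ is the pullback of the area form of the unit sphere in $\mathbb{R}^m$. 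For $k=1$ (so that $y$ is absent) the construction degenerates to the spherical shell $B_1\setminus\overline{B_\rho}$ and $\sigma$ is then an honest element of $\mathcal{H}_N\left(\Omega_1;\Lambda^{n-1}\right)$, requiring no cutoff.
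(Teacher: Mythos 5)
Your construction is correct and is, in substance, the Hodge dual of the paper's own: the paper works with $C_{T}\left(  \Omega_{k},k\right)  $ on the product of an annulus $\left\{  r<\left\vert z\right\vert <1\right\}  \subset\mathbb{R}^{n-k+1}$ with a unit cube in the remaining $k-1$ variables, using the test form $\left\vert z\right\vert ^{-\left(  n-k+1\right)  }\left(  \sum_{i}z_{i}\,dz^{i}\right)  \wedge dy^{1}\wedge\cdots\wedge dy^{k-1}$, which is precisely $\pm\ast\sigma$ for your Gauss form $\sigma$; through $C_{T}\left(  \Omega,k\right)  =C_{N}\left(  \Omega,n-k\right)  $ the two computations coincide, and both give the rate $\rho^{-2}$ (resp. $\rho^{-2}/\log\left(  1/\rho\right)  $ when $n=k+1$). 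The real differences are the choice of domain (you smooth a tube complement inside a fixed ball, the paper uses a non-smooth product domain and remarks that it can be smoothed) and your use of a cutoff, and there is one claim you should repair: with a tube of constant radius $\rho$, the cylinder $\left\{  \left\vert z\right\vert =\rho\right\}  $ reaches $\partial B$, so the transition region of $\chi$ cannot avoid points with $\left\vert z\right\vert $ comparable to $\rho$; on $\operatorname*{supp}d\chi$ one therefore only has $\left\vert \sigma\right\vert \leq\left\vert z\right\vert ^{-\left(  n-k\right)  }$ with $\left\vert z\right\vert \geq\rho$, which yields $\left\Vert d\omega\right\Vert ^{2}+\left\Vert \delta\omega\right\Vert ^{2}\lesssim\rho^{-\left(  n-k-1\right)  }$ (or $\log\left(  1/\rho\right)  $), not $O\left(  1\right)  $. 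This is harmless, since that bound is of the same order as $\left\Vert \omega\right\Vert ^{2}$ and still $o\left(  \left\Vert \nabla\omega\right\Vert ^{2}\right)  =o\left(  \rho^{-\left(  n-k+1\right)  }\right)  $, so the Rayleigh quotient still blows up like $\rho^{-2}$; alternatively you can dispense with the cutoff entirely by noting that $\sigma$ contains no $dy$ factors and $z\,\lrcorner\,\sigma=0$, so $\nu\,\lrcorner\,\sigma=0$ holds automatically on $\partial B$ and on any rounding of the tube that is invariant under rotations in $y$ and in $z$ separately, putting $\sigma$ itself in $W_{N}^{1,2}\left(  \Omega_{k};\Lambda^{n-k}\right)  $ exactly as in your $k=1$ case.
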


\begin{remark}
Except for the case $k=1,$ the sets $\Omega_{k}$ that we construct are not
smooth. However it is easy to modify slightly these sets so as to make them
smooth, while preserving the proposition.
\end{remark}

\begin{proof}
Since $C_{T}\left(  \Omega,k\right)  =C_{N}\left(  \Omega,n-k\right)  ,$ it is
sufficient to prove the result for $C_{T}\left(  \Omega,k\right)  .$ For the
sake of clarity we deal with the case $k=1$ separately.\smallskip

\emph{Step 1 (}$k=1$\emph{).} We let, for $x\in\mathbb{R}^{n},$ $\left\vert
x\right\vert $ denote the usual Euclidean norm. Let $0<r<1$ and $\Omega
_{1}=\left\{  x\in\mathbb{R}^{n}:r<\left\vert x\right\vert <1\right\}  .$ We
then choose $\lambda\in C^{1}\left(  \left[  r,1\right]  \right)  $ arbitrary
and%
\[
\omega\left(  x\right)  =\lambda\left(  \left\vert x\right\vert \right)
\sum_{i=1}^{n}x_{i}\,dx^{i}\in W_{T}^{1,2}\left(  \Omega_{1};\Lambda
^{1}\right)  .
\]
Clearly%
\[
\omega_{x_{i}}^{j}=\lambda\,\delta^{ij}+\lambda^{\prime}\,\frac{x_{i}x_{j}%
}{\left\vert x\right\vert }\,,\quad d\omega=0\quad\text{and}\quad\delta
\omega=\operatorname{div}\left(  x\,\lambda\right)  =n\,\lambda+\left\vert
x\right\vert \lambda^{\prime}%
\]
leading to%
\[
\left\vert \delta\omega\right\vert ^{2}=\left(  n\,\lambda+\left\vert
x\right\vert \lambda^{\prime}\right)  ^{2}\quad\text{and}\quad\left\vert
\nabla\omega\right\vert ^{2}=n\,\lambda^{2}+2\left\vert x\right\vert
\lambda\,\lambda^{\prime}+\left\vert x\right\vert ^{2}\left(  \lambda^{\prime
}\right)  ^{2}.
\]
Choose $\lambda\left(  s\right)  =s^{-n}$ (with this choice we have
$\delta\omega=0$). We therefore have (denoting by $\sigma_{n}$ the measure of
the unit sphere of $\mathbb{R}^{n}$) that%
\[%
{\displaystyle\int_{\Omega_{1}}}
\left\vert \nabla\omega\right\vert ^{2}=\sigma_{n}\int_{r}^{1}\left(
n^{2}-n\right)  s^{-n-1}ds=\left.  \frac{\sigma_{n}\left(  n^{2}-n\right)
s^{-n}}{-n}\right\vert _{r}^{1}=\sigma_{n}\left(  n-1\right)  \left[
r^{-n}-1\right]
\]
while%
\[%
{\displaystyle\int_{\Omega_{1}}}
\left(  \left\vert d\omega\right\vert ^{2}+\left\vert \delta\omega\right\vert
^{2}+\left\vert \omega\right\vert ^{2}\right)  =\sigma_{n}\int_{r}^{1}%
s^{-n+1}ds=\left\{
\begin{array}
[c]{cl}%
\frac{\sigma_{n}}{n-2}\left[  r^{-n+2}-1\right]  & \text{if }n>2\smallskip\\
-\sigma_{n}\log r & \text{if }n=2.
\end{array}
\right.
\]
Therefore, when $r\rightarrow0,$ we find (writing $\sim$ for the asymptotic
behavior)%
\[
\frac{\left\Vert \nabla\omega\right\Vert ^{2}}{\left\Vert d\omega\right\Vert
^{2}+\left\Vert \delta\omega\right\Vert ^{2}+\left\Vert \omega\right\Vert
^{2}}\sim\left\{
\begin{array}
[c]{cl}%
\frac{\left(  n-2\right)  \left(  n-1\right)  }{r^{2}} & \text{if
}n>2\smallskip\\
-\frac{1}{r^{2}\log r} & \text{if }n=2.
\end{array}
\right.
\]
Thus, for $r$ sufficiently small, we deduce that $C_{T}\left(  \Omega
_{1},1\right)  $ is arbitrarily large as wished.\smallskip

\emph{Step 2 (}$2\leq k\leq n-1$\emph{).} We divide the proof into two
parts.\smallskip

\emph{Step 2.1.}\textbf{ }Let us introduce some notations.\smallskip

1) We write for $x=\left(  x_{1},\cdots,x_{n}\right)  \in\mathbb{R}^{n}$%
\[
\left\vert x\right\vert _{k}=\sqrt{x_{1}^{2}+\cdots+x_{n-k+1}^{2}}\,.
\]

2) Let $0<r<1.$ The set $\Omega_{k}\subset\mathbb{R}^{n}$ is then chosen as%
\[
\Omega_{k}=\left\{  x\in\mathbb{R}^{n}:r<\left\vert x\right\vert _{k}<1\text{
and }0<x_{n-k+2},\cdots,x_{n}<1\right\}  .
\]

3) We finally let $\lambda\in C^{1}\left(  \left[  r,1\right]  \right)  $ to
be chosen below,%
\[
\varphi_{k}\left(  x\right)  =\sum_{i=1}^{n-k+1}x_{i}\,dx^{i}\quad
\text{and}\quad\omega_{k}\left(  x\right)  =\lambda\left(  \left\vert
x\right\vert _{k}\right)  \varphi_{k}\left(  x\right)  \wedge dx^{n-k+2}%
\wedge\cdots\wedge dx^{n}\in\Lambda^{k}.
\]

\emph{Step 2.2.}\textbf{ }Observe the following facts.\smallskip

(i) If $\nu$ is the outward unit normal to $\Omega_{k}\,,$ then%
\[
\nu\wedge\omega_{k}=0\quad\text{on }\partial\Omega.
\]
Indeed one sees that this is the case by distinguishing between the lateral
boundaries $\left\vert x\right\vert _{k}=r,1$ where%
\[
\nu=\frac{\pm1}{\left\vert x\right\vert _{k}}\left(  x_{1},\cdots
,x_{n-k+1},0,\cdots,0\right)  \quad\Rightarrow\quad\nu\wedge\varphi_{k}=0
\]
and the horizontal boundaries $x_{s}=0,1$ ($n-k+2\leq s\leq n$) where%
\[
\nu=\pm e_{s}\quad\Rightarrow\quad\nu\wedge dx^{n-k+2}\wedge\cdots\wedge
dx^{n}=0.
\]

(ii) We have, for $1\leq i_{1}<\cdots<i_{k}\leq n,$ that%
\[
\omega_{k}^{i_{1}\cdots i_{k}}\left(  x\right)  =\left\{
\begin{array}
[c]{cl}%
\lambda\left(  \sqrt{x_{1}^{2}+\cdots+x_{n-k+1}^{2}}\,\right)  x_{i_{1}} &
\text{if }%
\begin{array}
[c]{l}%
1\leq i_{1}\leq n-k+1\smallskip\\
\left(  i_{2},\cdots,i_{k}\right)  =\left(  \left(  n-k+2\right)
,\cdots,n\right)
\end{array}
\medskip\\
0 & \text{otherwise.}%
\end{array}
\right.
\]
and thus, if $1\leq i,j\leq n-k+1,$%
\[
\frac{\partial\omega_{k}^{i\,\left(  n-k+2\right)  \cdots n}}{\partial x_{j}%
}=\lambda\,\delta^{ij}+\lambda^{\prime}\,\frac{x_{i}x_{j}}{\left\vert
x\right\vert _{k}}%
\]
and all the other partial derivatives are $0.$\smallskip

(iii) This leads to $d\omega_{k}=0.$ Indeed if we set%
\[
\mu^{\prime}\left(  s\right)  =s\,\lambda\left(  s\right)  \quad
\text{and}\quad\eta\left(  x\right)  =\mu\left(  \left\vert x\right\vert
_{k}\right)
\]
we see that%
\[
\lambda\left(  \left\vert x\right\vert _{k}\right)  \varphi_{k}\left(
x\right)  =d\eta\left(  x\right)  \quad\Rightarrow\quad d\omega_{k}=0.
\]

(iv) We now prove that%
\[
\left\vert \delta\omega_{k}\right\vert ^{2}=\left(  \left(  n-k+1\right)
\lambda+\left\vert x\right\vert _{k}\lambda^{\prime}\right)  ^{2}.
\]
Indeed, since%
\[
\left(  \delta\omega_{k}\right)  ^{i_{1}\cdots i_{k-1}}=\sum_{\gamma=1}%
^{k}\left(  -1\right)  ^{\gamma-1}\sum_{i_{\gamma-1}<j<i_{\gamma}}%
\frac{\partial\omega_{k}^{i_{1}\cdots i_{\gamma-1}ji_{\gamma}\cdots i_{k-1}}%
}{\partial x_{j}}\,,
\]
we have $\left(  \delta\omega_{k}\right)  ^{i_{1}\cdots i_{k-1}}=0$ unless
$\left(  i_{1},\cdots,i_{k-1}\right)  =\left(  \left(  n-k+2\right)
,\cdots,n\right)  ;$ while%
\[
\left(  \delta\omega_{k}\right)  ^{(n-k+2)\cdots n}=\sum_{j=1}^{n-k+1}%
\frac{\partial\omega_{k}^{jn-k+2\cdots n}}{\partial x_{j}}=\sum_{j=1}%
^{n-k+1}\frac{\partial\left(  \lambda\left(  \left\vert x\right\vert
_{k}\right)  x_{j}\right)  }{\partial x_{j}}=\left(  n-k+1\right)
\lambda+\left\vert x\right\vert _{k}\lambda^{\prime}.
\]

(v) We next observe that%
\[
\left\vert \nabla\omega_{k}\right\vert ^{2}=\left(  n-k+1\right)  \lambda
^{2}+2\left\vert x\right\vert _{k}\lambda\,\lambda^{\prime}+\left\vert
x\right\vert _{k}^{2}\left(  \lambda^{\prime}\right)  ^{2}.
\]

(vi) Finally choose $\lambda\left(  s\right)  =s^{-\left(  n-k+1\right)  }$
(with this choice we have $\delta\omega_{k}=0$). We therefore have
($\sigma_{n-k+1}$ denoting the measure of the unit sphere of $\mathbb{R}%
^{n-k+1}$) that%
\begin{align*}%
{\displaystyle\int_{\Omega_{k}}}
\left\vert \nabla\omega_{k}\right\vert ^{2}  &  =\sigma_{n-k+1}\int_{r}%
^{1}\left(  n-k+1\right)  \left(  n-k\right)  s^{-n+k-2}ds=\left.
\sigma_{n-k+1}\left(  k-n\right)  s^{-n+k-1}\right\vert _{r}^{1}\smallskip\\
&  =\sigma_{n-k+1}\left(  n-k\right)  \left[  r^{-n+k-1}-1\right]
\end{align*}
while%
\[%
{\displaystyle\int_{\Omega_{k}}}
\left(  \left\vert d\omega_{k}\right\vert ^{2}+\left\vert \delta\omega
_{k}\right\vert ^{2}+\left\vert \omega_{k}\right\vert ^{2}\right)
=\sigma_{n-k+1}\int_{r}^{1}s^{-n+k}ds=\left\{
\begin{array}
[c]{cl}%
\frac{\sigma_{n-k+1}}{n-k-1}\left[  r^{-n+k+1}-1\right]  & \text{if
}n>k+1\smallskip\\
-\sigma_{n-k+1}\log r & \text{if }n=k+1.
\end{array}
\right.
\]
Therefore, when $r\rightarrow0,$ we find (writing $\sim$ for the asymptotic
behavior)%
\[
\frac{\left\Vert \nabla\omega_{k}\right\Vert ^{2}}{\left\Vert d\omega
_{k}\right\Vert ^{2}+\left\Vert \delta\omega_{k}\right\Vert ^{2}+\left\Vert
\omega_{k}\right\Vert ^{2}}\sim\left\{
\begin{array}
[c]{cl}%
\frac{\left(  n-k-1\right)  \left(  n-k\right)  }{r^{2}} & \text{if
}n>k+1\smallskip\\
-\frac{1}{r^{2}\log r} & \text{if }n=k+1.
\end{array}
\right.
\]
Thus, for $r$ sufficiently small, we deduce that $C_{T}\left(  \Omega
_{k},k\right)  \ $is arbitrarily large as wished.\smallskip
\end{proof}

\section{The case of polytopes}

\begin{definition}
\label{Definition: polytope generalise}$\Omega\subset\mathbb{R}^{n}$ is said
to be a \emph{generalized polytope}, if there exist $\Omega_{0}\,,\Omega
_{1}\,,\cdots,\Omega_{M}$ bounded open polytopes such that, for every
$i,j=1,\cdots,M$ with $i\neq j,$%
\[
\overline{\Omega}_{i}\subset\Omega_{0}\,,\quad\overline{\Omega}_{i}%
\cap\overline{\Omega}_{j}=\emptyset\quad\text{and}\quad\Omega=\Omega
_{0}\setminus\left(
{\textstyle\bigcup\limits_{i=1}^{M}}
\overline{\Omega}_{i}\right)  .
\]
In this case $\overline{\Omega}_{i}\,,i=1,\cdots,M,$ are called the
\emph{holes}.
\end{definition}

\begin{theorem}
\label{Theoreme polytopes}Let $\Omega\subset\mathbb{R}^{n}$ be a generalized
polytope. Then the following identity holds%
\[
\left\Vert \nabla\omega\right\Vert ^{2}=\left\Vert d\omega\right\Vert
^{2}+\left\Vert \delta\omega\right\Vert ^{2},\quad\forall\,\omega\in C_{T}%
^{1}\left(  \overline{\Omega};\Lambda^{k}\right)  \cup C_{N}^{1}\left(
\overline{\Omega};\Lambda^{k}\right)  .
\]

\end{theorem}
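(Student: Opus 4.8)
The plan is to reduce the asserted equality to the vanishing of a single boundary integral, and then to exploit the flatness of the faces of $\partial\Omega$. For $\omega\in C^{2}(\overline{\Omega};\Lambda^{k})$ the Green formulas for $d$ and $\delta$ (which hold on any Lipschitz set, the outward normal being defined a.e.\ on $\partial\Omega$) together with the flat identity that $\delta d+d\delta$ equals the componentwise Laplacian $\Delta=\sum_{j}\partial_{x_{j}}^{2}$ reproduce exactly the computation behind (\ref{Equation 1 dans CT=1}), but \emph{before} the tangential integration by parts that produces $L^{\nu}$ and $K^{\nu}$:
\[
\int_{\Omega}\left(|d\omega|^{2}+|\delta\omega|^{2}-|\nabla\omega|^{2}\right)=\int_{\partial\Omega}B(\omega),\quad B(\omega):=\langle\nu\wedge\omega;d\omega\rangle+\langle\nu\,\lrcorner\,\omega;\delta\omega\rangle-\langle\partial_{\nu}\omega;\omega\rangle.
\]
No boundary condition on $\omega$ is used here, and since a generalized polytope is Lipschitz, this is legitimate. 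Both sides are continuous quadratic functionals of $\omega$ for the $C^{1}(\overline{\Omega})$ norm, and $C^{\infty}(\overline{\Omega})$ is dense in $C^{1}(\overline{\Omega})$; hence the identity extends to every $\omega\in C^{1}(\overline{\Omega};\Lambda^{k})$, which is the regularity in the statement.

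It remains to prove $\int_{\partial\Omega}B(\omega)=0$ for $\omega\in C_{T}^{1}\cup C_{N}^{1}$. By Definition \ref{Definition: polytope generalise}, $\partial\Omega$ is a finite union of flat $(n-1)$-dimensional faces $F_{m}$ (the remaining, lower-dimensional, faces are $(n-1)$-negligible), and on each $F_{m}$ the normal $\nu=\nu_{m}$ is constant. Fix one face; after an orthogonal change of variables, under which $d,\delta,\nabla$ and all the pairings are equivariant, assume $\nu_{m}=e_{1}$, so that $F_{m}\subset\{x_{1}=\mathrm{const}\}$ and $\partial_{2},\dots,\partial_{n}$ are the tangential derivatives. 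For $\omega\in C_{T}^{1}$ one has $\nu\wedge\omega=0$ on $F_{m}$, i.e.\ $\omega^{I}=0$ there whenever $1\notin I$; the first term of $B$ drops, and expanding the other two yields, on $F_{m}$,
\[
B(\omega)=\sum_{J}\omega^{1J}\Big(\partial_{1}\omega^{1J}+\sum_{j\geq2}\partial_{j}\omega^{jJ}\Big)-\sum_{J}\big(\partial_{1}\omega^{1J}\big)\,\omega^{1J}=\sum_{J}\omega^{1J}\sum_{j\geq2}\partial_{j}\omega^{jJ},
\]
the sum running over increasing $(k-1)$-indices $J\subset\{2,\dots,n\}$. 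The key point is that for $j\geq2$ the component $\omega^{jJ}$ carries no index $1$, hence vanishes identically on the open flat piece $F_{m}$, and as $\partial_{j}$ is tangential to $F_{m}$ we get $\partial_{j}\omega^{jJ}=0$ on $F_{m}$. Thus $B(\omega)\equiv0$ on each $F_{m}$ and $\int_{\partial\Omega}B(\omega)=0$, proving the equality for the tangential case.

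The normal case $\omega\in C_{N}^{1}$ follows by Hodge duality: $\ast$ is a pointwise isometry commuting with $\nabla$ and exchanging $d$ and $\delta$, and by (\ref{equation:decomposing omega by nu}) it maps $C_{N}^{1}(\overline{\Omega};\Lambda^{k})$ onto $C_{T}^{1}(\overline{\Omega};\Lambda^{n-k})$ while leaving $|d\omega|^{2}+|\delta\omega|^{2}-|\nabla\omega|^{2}$ invariant, so that case reduces to the one already treated (alternatively, the face computation runs identically with the roles of the two middle terms of $B$ interchanged). I expect the only real difficulty to be the face computation: one must check the exact cancellation of the two normal-derivative terms and then recognize that the surviving contribution consists of tangential derivatives of components that the boundary condition forces to vanish on the whole face. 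This is precisely where flatness enters — the second fundamental form vanishes, equivalently $L^{\nu}=K^{\nu}=0$ in Lemma \ref{lemma:L nu equal bilinear form with curvatures and dual version} — so that, contrary to the curved case, the boundary integrand is pointwise zero and no curvature term can appear. The supporting facts (Green's formulas and the divergence theorem on a Lipschitz polytope, and the $C^{2}$-to-$C^{1}$ density) are routine.
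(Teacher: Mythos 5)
Your proof is correct, but it follows a genuinely different route from the paper's. The paper first rewrites $|d\omega|^{2}+|\delta\omega|^{2}-|\nabla\omega|^{2}$ pointwise as a sum of $2\times2$ Jacobian determinants (Lemma \ref{algebraic identity}), integrates these null Lagrangians by parts (Lemma \ref{afterintegrationbypartslipschitz}), and then splits the remaining boundary derivatives into normal and tangential parts along a frame of principal directions to obtain the general curvature formula of Theorem \ref{piecewisesmoothformula}; Theorem \ref{Theoreme polytopes} is then read off as the special case where all principal curvatures vanish. You instead take the classical Gaffney--Bochner route, namely Green's formulas for $d$ and $\delta$ combined with the flat identity $d\delta+\delta d=\Delta$ acting componentwise, to land directly on the boundary functional $B(\omega)=\langle\nu\wedge\omega;d\omega\rangle+\langle\nu\,\lrcorner\,\omega;\delta\omega\rangle-\langle\partial_{\nu}\omega;\omega\rangle$, and you then kill $B$ pointwise on each flat face by an elementary component computation that uses only the constancy of $\nu$ and the boundary condition: the two normal-derivative contributions cancel, and what survives consists of tangential derivatives of components forced to vanish identically on the face. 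I checked your identity for $B$, the face computation (including the cancellation of $\sum_{J}\omega^{1J}\partial_{1}\omega^{1J}$ against $\langle\partial_{\nu}\omega;\omega\rangle$), the $C^{2}$-to-$C^{1}$ density step, and the Hodge-duality reduction of the normal case; all are sound, and they parallel the routine points the paper itself invokes (its Lemma \ref{afterintegrationbypartslipschitz} also passes from $C^{2}$ to $C^{1}$ by density). What the paper's longer route buys is the quantitative curvature identity (\ref{tangentialcondition})--(\ref{normalcondition}) for general piecewise $C^{2}$ Lipschitz domains, which is reused elsewhere (for the alternative proof of Theorem \ref{Theoreme equiv pour CT} and for Corollary \ref{Corollaire Gaffney et Korn}); what your route buys is a shorter, self-contained argument for the polytope case that never needs principal directions or the second fundamental form.
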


\begin{remark}
\label{Remarque: champs harmoniques et polytopes}\emph{(i)} Note that we do
not make any assumption on the topology of the domain and that holes are
allowed. The identity shows that there are no non-trivial harmonic fields with
vanishing tangential (or normal) component which are of class $C^{1}.$
However, in presence of holes, there are non-trivial harmonic fields with
weaker regularity (this is, of course, a problem only on the boundary, since
harmonic fields are $C^{\infty}$ in the interior).

\emph{(ii)} In the case $k=1,$ see also \cite{Csato-Kneuss-Rajendran}.
\end{remark}

Before proceeding with the proof, we need to introduce a few notations that
would help us keep track of the signs in the proof.

\begin{notation}
\emph{(i)} For $1\leqslant k\leqslant n,$ we write%
\[
\mathcal{T}^{k}=\{(i_{1},\cdots,i_{k})\in\mathbb{N}^{k}:1\leqslant
i_{1}<\cdots<i_{k}\leqslant n\}.
\]
For $I=(i_{1},\cdots,i_{k})\in\mathcal{T}^{k},$ we write $dx^{I}$ to denote
$dx^{i_{1}}\wedge\cdots\wedge dx^{i_{k}}.$\smallskip

\emph{(ii)} For $i\in I,$ we write $I_{\widehat{i}}=(i_{1},\cdots
,\widehat{i},\cdots,i_{k}),$ where $\widehat{i}$ denotes the absence of the
named index $i.$ Note that, $I_{\widehat{i_{p}}}\in\mathcal{T}^{k-1}\,,$ for
all $1\leqslant p\leqslant k.$ Similarly, for $i,j\in I,$ $i<j,$ we write
$I_{\widehat{ij}}=(i_{1},\cdots,\widehat{i},\cdots,\widehat{j},\cdots,i_{k}%
).$\smallskip

\emph{(iii)} Given $I\in\mathcal{T}^{k}$ and $i,j\notin I,$ $i\neq j,$ we
write $\left[  iI\right]  $ to denote the increasing multiindex formed by the
index $i$ and the indices in $I.$ In other words $\left[  iI\right]  $ is the
permutation of the indices such that $\left[  iI\right]  \in\mathcal{T}%
^{k+1}.$ Furthermore, we define the sign of $\left[  i,I\right]  ,$ denoted by
$\operatorname*{sgn}\left[  i,I\right]  ,$ as%
\[
dx^{\left[  iI\right]  }=\operatorname*{sgn}\left[  i,I\right]  dx^{i}\wedge
dx^{I}.
\]
Similarly, $\left[  ijI\right]  $ is the permutation of the indices such that
$[ijI]\in\mathcal{T}^{k+2}$ and $\operatorname*{sgn}\left[  i,j,I\right]  $ is
given by%
\[
dx^{\left[  ijI\right]  }=\operatorname*{sgn}\left[  i,j,I\right]
dx^{i}\wedge dx^{j}\wedge dx^{I}.
\]

\end{notation}

We need a few lemmas for the theorem.

\begin{lemma}
\label{sign lemma} Let $n\geq2$ and $1\leq k\leq n-1$ be integers. Then for
any $I\in\mathcal{T}^{k+1}$ and every $i,j\in I$ with $i\neq j,$%
\[
\operatorname*{sgn}\left[  i,I_{\widehat{ij}}\right]  \operatorname*{sgn}%
\left[  j,I_{\widehat{ij}}\right]  =-\operatorname*{sgn}\left[
i,I_{\widehat{i}}\right]  \operatorname*{sgn}\left[  j,I_{\widehat{j}}\right]
\]
and for any $I\in\mathcal{T}^{k-1}$ and every $i,j\notin I$ with $i\neq j,$%
\[
\operatorname*{sgn}\left[  i,\left[  jI\right]  \right]  \operatorname*{sgn}%
\left[  j,\left[  iI\right]  \right]  =-\operatorname*{sgn}\left[  i,I\right]
\operatorname*{sgn}\left[  j,I\right]  .
\]

\end{lemma}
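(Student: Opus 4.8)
The plan is to reduce both identities to a single explicit formula for the sign of an insertion, after which everything becomes exponent bookkeeping. First I would observe that for any $I\in\mathcal{T}^{m}$ and any index $a\notin I$, sorting $dx^{a}\wedge dx^{I}$ into increasing order amounts to moving the factor $dx^{a}$ rightward past exactly those $dx^{l}$ with $l\in I$ and $l<a$, each such move being an adjacent transposition contributing a factor $-1$. Hence
\[
\operatorname{sgn}\left[a,I\right]=\left(-1\right)^{\sigma\left(a,I\right)},\qquad\sigma\left(a,I\right)=\#\left\{l\in I:l<a\right\}.
\]
The minus sign in each identity will then arise from a single unit discrepancy in these counts.

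For the first identity I would first note that both sides are symmetric under interchanging $i$ and $j$ (since $I_{\widehat{ij}}$ is symmetric in $i,j$ and the right-hand product is too), so I may assume $i<j$. Expanding all four signs by the formula, the two decisive observations are that deleting the \emph{larger} index $j$ leaves the count below $i$ unchanged, giving $\sigma(i,I_{\widehat{ij}})=\sigma(i,I_{\widehat{i}})$, whereas deleting the \emph{smaller} index $i$ removes exactly one element lying below $j$, giving $\sigma(j,I_{\widehat{ij}})=\sigma(j,I_{\widehat{j}})-1$. Summing the exponents, the left-hand side equals $(-1)^{\sigma(i,I_{\widehat i})+\sigma(j,I_{\widehat j})-1}$, which is precisely $-1$ times the right-hand side.

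The second identity is treated identically after reducing to $i<j$ by the same symmetry, except that one now inserts rather than deletes: forming $[jI]$ adjoins the index $j>i$, which contributes nothing below $i$, so $\sigma(i,[jI])=\sigma(i,I)$, while forming $[iI]$ adjoins $i<j$, contributing exactly one element below $j$, so $\sigma(j,[iI])=\sigma(j,I)+1$. Again a single extra unit in the exponent produces the overall minus sign. I do not expect any genuine obstacle here: the whole argument is elementary sign bookkeeping, and the only point demanding care is consistently tracking whether the index being inserted or deleted lies below the reference index $i$ or $j$.
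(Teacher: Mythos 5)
Your proof is correct, and it takes a genuinely different route from the paper's. You derive everything from the explicit closed form $\operatorname{sgn}\left[a,I\right]=\left(-1\right)^{\#\left\{l\in I:l<a\right\}}$ and then track how the inversion count changes when the second index ($i$ or $j$) is deleted from, or adjoined to, the reference multiindex; the single unit discrepancy between $\sigma(j,I_{\widehat{ij}})$ and $\sigma(j,I_{\widehat{j}})$ (respectively between $\sigma(j,[iI])$ and $\sigma(j,I)$) produces the minus sign. The paper instead never writes down an explicit formula for the sign: it observes that $I_{\widehat{i}}=\left[jI_{\widehat{ij}}\right]$ and $I_{\widehat{j}}=\left[iI_{\widehat{ij}}\right]$, so that
\[
\operatorname{sgn}\left[i,I_{\widehat{i}}\right]\operatorname{sgn}\left[j,I_{\widehat{ij}}\right]=\operatorname{sgn}\left[i,j,I_{\widehat{ij}}\right]=-\operatorname{sgn}\left[j,i,I_{\widehat{ij}}\right]=-\operatorname{sgn}\left[j,I_{\widehat{j}}\right]\operatorname{sgn}\left[i,I_{\widehat{ij}}\right],
\]
i.e.\ it exploits only the multiplicativity of the sign under successive insertion together with the anticommutativity $dx^{i}\wedge dx^{j}=-dx^{j}\wedge dx^{i}$, and then obtains the second identity from the first by letting $I\in\mathcal{T}^{k-1}$ play the role of $I_{\widehat{ij}}$. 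The paper's argument is shorter, avoids the case reduction to $i<j$, and handles both identities at once; yours is more computational but entirely self-contained and makes the mechanism of the sign flip completely explicit, which some readers may find more transparent. Both are complete proofs; the only point worth double-checking in yours is the symmetry reduction to $i<j$, which is indeed legitimate since both sides of each identity are visibly symmetric under interchanging $i$ and $j$.
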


\begin{remark}
When $k=1$ both equations read as%
\[
\operatorname*{sgn}\left[  i,j\right]  \operatorname*{sgn}\left[  j,i\right]
=-1.
\]
Indeed elements $I\in\mathcal{T}^{k-1}$ are as if they were absent, i.e.
$\left[  jI\right]  =j$ and $\operatorname*{sgn}\left[  i,I\right]  =1.$
\end{remark}

\begin{proof}
Since $I_{\widehat{i}}=\left[  jI_{\widehat{ij}}\right]  $ and $I_{\widehat{j}%
}=\left[  iI_{\widehat{ij}}\right]  ,$ we have the identity%
\[
\operatorname*{sgn}\left[  i,I_{\widehat{i}}\right]  \operatorname*{sgn}%
\left[  j,I_{\widehat{ij}}\right]  =\operatorname*{sgn}\left[
i,j,I_{\widehat{ij}}\right]  =-\operatorname*{sgn}\left[  j,i,I_{\widehat{ij}%
}\right]  =-\operatorname*{sgn}\left[  j,I_{\widehat{j}}\right]
\operatorname*{sgn}\left[  i,I_{\widehat{ij}}\right]  .
\]
This proves the first identity. The second one is just the first one, where
$I\in\mathcal{T}^{k-1}$ plays the role of $I_{\widehat{ij}}\,.$ This finishes
the proof.\smallskip
\end{proof}

The next lemma gives a pointwise identity.

\begin{lemma}
\label{algebraic identity} Let $U\subset\mathbb{R}^{n}$ be open and let
$\omega\in C^{1}\left(  U;\Lambda^{k}\right)  .$ Then, for any $x\in U,$%
\begin{equation}
\left\vert d\omega\right\vert ^{2}+\left\vert \delta\omega\right\vert
^{2}-\left\vert \nabla\omega\right\vert ^{2}=\sum_{I\in\mathcal{T}^{k+1}}%
\sum_{\substack{i,j\in I\\i\neq j}}\operatorname*{sgn}\left[  i,I_{\widehat{i}%
}\right]  \operatorname*{sgn}\left[  j,I_{\widehat{j}}\right]  \left(
\frac{\partial\omega^{I_{\widehat{i}}}}{\partial x_{i}}\frac{\partial
\omega^{I_{\widehat{j}}}}{\partial x_{j}}-\frac{\partial\omega^{I_{\widehat{j}%
}}}{\partial x_{i}}\frac{\partial\omega^{I_{\widehat{i}}}}{\partial x_{j}%
}\right)  \label{pointwise identiy k+1}%
\end{equation}%
\begin{equation}
\left\vert d\omega\right\vert ^{2}+\left\vert \delta\omega\right\vert
^{2}-\left\vert \nabla\omega\right\vert ^{2}=\sum_{I\in\mathcal{T}^{k-1}}%
\sum_{\substack{i,j\notin I\\i<j}}\operatorname*{sgn}\left[  i,I\right]
\operatorname*{sgn}\left[  j,I\right]  \left(  \frac{\partial\omega^{\left[
iI\right]  }}{\partial x_{i}}\frac{\partial\omega^{\left[  jI\right]  }%
}{\partial x_{j}}-\frac{\partial\omega^{\left[  jI\right]  }}{\partial x_{i}%
}\frac{\partial\omega^{\left[  iI\right]  }}{\partial x_{j}}\right)  .
\label{pointwise identiy k-1}%
\end{equation}

\end{lemma}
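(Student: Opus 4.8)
The plan is to prove both identities by expanding the three squared norms in components and then reorganizing the resulting sums. From the definitions of Section 2, I first record the coefficients of $d\omega$ and $\delta\omega$: for $J\in\mathcal{T}^{k+1}$ and $K\in\mathcal{T}^{k-1}$,
\[
(d\omega)^{J}=\sum_{i\in J}\operatorname{sgn}[i,J_{\widehat{i}}]\,\frac{\partial\omega^{J_{\widehat{i}}}}{\partial x_{i}},\qquad (\delta\omega)^{K}=\sum_{i\notin K}\operatorname{sgn}[i,K]\,\frac{\partial\omega^{[iK]}}{\partial x_{i}},
\]
the second using the antisymmetric extension $\omega^{iK}=\operatorname{sgn}[i,K]\,\omega^{[iK]}$. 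Squaring and summing, I split each of $|d\omega|^{2}$ and $|\delta\omega|^{2}$ into a diagonal part ($i=j$) and an off-diagonal part ($i\neq j$).

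The first step is to dispose of the diagonal contributions. The diagonal part of $|d\omega|^{2}$ is $\sum_{J\in\mathcal{T}^{k+1}}\sum_{i\in J}(\partial\omega^{J_{\widehat{i}}}/\partial x_{i})^{2}$; reindexing by $(I,i)=(J_{\widehat{i}},i)$ turns this into $\sum_{I\in\mathcal{T}^{k}}\sum_{i\notin I}(\partial\omega^{I}/\partial x_{i})^{2}$. Likewise the diagonal part of $|\delta\omega|^{2}$ reindexes to $\sum_{I\in\mathcal{T}^{k}}\sum_{i\in I}(\partial\omega^{I}/\partial x_{i})^{2}$. Adding the two recovers exactly $|\nabla\omega|^{2}=\sum_{I\in\mathcal{T}^{k}}\sum_{i=1}^{n}(\partial\omega^{I}/\partial x_{i})^{2}$, so the diagonal terms cancel the $-|\nabla\omega|^{2}$ on the left and we are reduced to
\[
|d\omega|^{2}+|\delta\omega|^{2}-|\nabla\omega|^{2}=(\text{off-diagonal of }|d\omega|^{2})+(\text{off-diagonal of }|\delta\omega|^{2}).
\]

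The heart of the argument is then to match these two off-diagonal sums with the stated right-hand sides. For identity (\ref{pointwise identiy k+1}) I keep the off-diagonal part of $|d\omega|^{2}$ as it stands (already indexed by $\mathcal{T}^{k+1}$, it supplies the $(\partial\omega^{I_{\widehat{i}}}/\partial x_i)(\partial\omega^{I_{\widehat{j}}}/\partial x_j)$ term) and rewrite the off-diagonal part of $|\delta\omega|^{2}$ through the bijection $K\leftrightarrow I:=[ijK]$ between triples $(K,i,j)$ with $K\in\mathcal{T}^{k-1}$, $i,j\notin K$ and triples $(I,i,j)$ with $I\in\mathcal{T}^{k+1}$, $i,j\in I$, under which $[iK]=I_{\widehat{j}}$ and $[jK]=I_{\widehat{i}}$. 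The only delicate point is the sign: Lemma \ref{sign lemma} gives $\operatorname{sgn}[i,K]\operatorname{sgn}[j,K]=-\operatorname{sgn}[i,I_{\widehat{i}}]\operatorname{sgn}[j,I_{\widehat{j}}]$, which converts the off-diagonal part of $|\delta\omega|^{2}$ into precisely the $-(\partial\omega^{I_{\widehat{j}}}/\partial x_i)(\partial\omega^{I_{\widehat{i}}}/\partial x_j)$ term, completing (\ref{pointwise identiy k+1}). Identity (\ref{pointwise identiy k-1}) follows by the mirror-image reorganization: now the off-diagonal part of $|\delta\omega|^{2}$ is kept (indexed by $\mathcal{T}^{k-1}$) while the off-diagonal part of $|d\omega|^{2}$ is pushed down to $\mathcal{T}^{k-1}$ by the same bijection and the second relation of Lemma \ref{sign lemma}. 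The natural derivation here produces a sum over ordered pairs $i\neq j$, and one invokes the $i\leftrightarrow j$ symmetry of the summand to fold it onto the index range $i<j$.

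I expect the only real difficulty to be the sign and index bookkeeping. The structural content is clean: each summand $(\partial\omega^{I_{\widehat{i}}}/\partial x_i)(\partial\omega^{I_{\widehat{j}}}/\partial x_j)-(\partial\omega^{I_{\widehat{j}}}/\partial x_i)(\partial\omega^{I_{\widehat{i}}}/\partial x_j)$ is the $2\times 2$ Jacobian minor $\partial(\omega^{I_{\widehat{i}}},\omega^{I_{\widehat{j}}})/\partial(x_i,x_j)$, so the identity exhibits $|d\omega|^{2}+|\delta\omega|^{2}-|\nabla\omega|^{2}$ as a signed combination of null-Lagrangian minors — which is precisely what will make the boundary integral vanish on flat faces in Theorem \ref{Theoreme polytopes}. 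The step most prone to error, and the one I would carry out most carefully, is verifying that the reindexing bijection together with Lemma \ref{sign lemma} reproduces exactly the claimed signs, in particular the decisive minus sign that turns one off-diagonal sum into the subtracted half of each Jacobian minor.
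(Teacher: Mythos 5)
Your proof is correct and essentially identical to the paper's: the paper likewise expands $\left\vert d\omega\right\vert ^{2}$ and $\left\vert \delta\omega\right\vert ^{2}$, cancels the diagonal terms against $\left\vert \nabla\omega\right\vert ^{2}$, rewrites each off-diagonal sum in both the $\mathcal{T}^{k+1}$ and the $\mathcal{T}^{k-1}$ indexing via the bijection you describe, and concludes with Lemma \ref{sign lemma}. One caveat on your final folding step: since the summand is symmetric in $i,j$, the sum over ordered pairs $i\neq j$ equals \emph{twice} the sum over $i<j$, so the derivation actually yields (\ref{pointwise identiy k-1}) with the range $i\neq j$ (paralleling (\ref{pointwise identiy k+1})) rather than the bare $i<j$ sum printed in the statement --- as the $k=1$ case in the remark, which carries an explicit factor $2$, confirms.
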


\begin{remark}
When $k=1$ the lemma reads as%
\[
\left\vert d\omega\right\vert ^{2}+\left\vert \delta\omega\right\vert
^{2}-\left\vert \nabla\omega\right\vert ^{2}=2\sum_{i<j}\left(  \frac
{\partial\omega^{i}}{\partial x_{i}}\frac{\partial\omega^{j}}{\partial x_{j}%
}-\frac{\partial\omega^{j}}{\partial x_{i}}\frac{\partial\omega^{i}}{\partial
x_{j}}\right)
\]
while for $k=2$%
\begin{align*}
&  \left\vert d\omega\right\vert ^{2}+\left\vert \delta\omega\right\vert
^{2}-\left\vert \nabla\omega\right\vert ^{2}\smallskip\\
&  =2\sum_{i<j<k}\left(  \frac{\partial\omega^{ij}}{\partial x_{j}}%
\frac{\partial\omega^{ik}}{\partial x_{k}}-\frac{\partial\omega^{ij}}{\partial
x_{k}}\frac{\partial\omega^{ik}}{\partial x_{j}}+\frac{\partial\omega^{ij}%
}{\partial x_{k}}\frac{\partial\omega^{jk}}{\partial x_{i}}-\frac
{\partial\omega^{ij}}{\partial x_{i}}\frac{\partial\omega^{jk}}{\partial
x_{k}}+\frac{\partial\omega^{ik}}{\partial x_{i}}\frac{\partial\omega^{jk}%
}{\partial x_{j}}-\frac{\partial\omega^{ik}}{\partial x_{j}}\frac
{\partial\omega^{jk}}{\partial x_{i}}\right)  .
\end{align*}

\end{remark}

\begin{proof}
We calculate%
\[
d\omega=\sum_{I\in\mathcal{T}^{k+1}}\left(  \sum_{i\in I}\operatorname*{sgn}%
\left[  i,I_{\widehat{i}}\right]  \frac{\partial\omega^{I_{\widehat{i}}}%
}{\partial x_{i}}\right)  dx^{I}\quad\text{and}\quad\delta\omega=\sum
_{I\in\mathcal{T}^{k-1}}\left(  \sum_{i\notin I}\operatorname*{sgn}\left[
i,I\right]  \frac{\partial\omega^{\left[  iI\right]  }}{\partial x_{i}%
}\right)  dx^{I}.
\]
We start by evaluating $\left\vert d\omega\right\vert ^{2},$ we find%
\[
\left\vert d\omega\right\vert ^{2}=\sum_{I\in\mathcal{T}^{k+1}}\sum_{i\in
I}\left(  \frac{\partial\omega^{I_{\widehat{i}}}}{\partial x_{i}}\right)
^{2}+2\sum_{I\in\mathcal{T}^{k+1}}\sum_{\substack{i,j\in I\\i<j}%
}\operatorname*{sgn}\left[  i,I_{\widehat{i}}\right]  \operatorname*{sgn}%
\left[  j,I_{\widehat{j}}\right]  \frac{\partial\omega^{I_{\widehat{i}}}%
}{\partial x_{i}}\frac{\partial\omega^{I_{\widehat{j}}}}{\partial x_{j}}\,.
\]
Rewriting the terms in two different ways, we obtain,%
\begin{equation}
\left\vert d\omega\right\vert ^{2}=\sum_{I\in\mathcal{T}^{k}}\sum_{i\notin
I}\left(  \frac{\partial\omega^{I}}{\partial x_{i}}\right)  ^{2}+2\sum
_{I\in\mathcal{T}^{k+1}}\sum_{\substack{i,j\in I\\i<j}}\operatorname*{sgn}%
\left[  i,I_{\widehat{i}}\right]  \operatorname*{sgn}\left[  j,I_{\widehat{j}%
}\right]  \frac{\partial\omega^{I_{\widehat{i}}}}{\partial x_{i}}%
\frac{\partial\omega^{I_{\widehat{j}}}}{\partial x_{j}} \label{k+1d}%
\end{equation}%
\begin{equation}
\left\vert d\omega\right\vert ^{2}=\sum_{I\in\mathcal{T}^{k}}\sum_{i\notin
I}\left(  \frac{\partial\omega^{I}}{\partial x_{i}}\right)  ^{2}+2\sum
_{I\in\mathcal{T}^{k-1}}\sum_{\substack{i,j\notin I\\i<j}}\operatorname*{sgn}%
\left[  i,\left[  jI\right]  \right]  \operatorname*{sgn}\left[  j,\left[
iI\right]  \right]  \frac{\partial\omega^{\left[  jI\right]  }}{\partial
x_{i}}\frac{\partial\omega^{\left[  iI\right]  }}{\partial x_{j}}.
\label{k-1d}%
\end{equation}
We next evaluate $\left\vert \delta\omega\right\vert ^{2},$ we get%
\[
\left\vert \delta\omega\right\vert ^{2}=\sum_{I\in\mathcal{T}^{k-1}}%
\sum_{i\notin I}\left(  \frac{\partial\omega^{\left[  iI\right]  }}{\partial
x_{i}}\right)  ^{2}+2\sum_{I\in\mathcal{T}^{k-1}}\sum_{\substack{i,j\notin
I\\i<j}}\operatorname*{sgn}\left[  i,I\right]  \operatorname*{sgn}\left[
j,I\right]  \frac{\partial\omega^{\left[  iI\right]  }}{\partial x_{i}}%
\frac{\partial\omega^{\left[  jI\right]  }}{\partial x_{j}}.
\]
Rewriting the terms in two different ways, we find%
\begin{equation}
\left\vert \delta\omega\right\vert ^{2}=\sum_{I\in\mathcal{T}^{k}}\sum_{i\in
I}\left(  \frac{\partial\omega^{I}}{\partial x_{i}}\right)  ^{2}+2\sum
_{I\in\mathcal{T}^{k-1}}\sum_{\substack{i,j\notin I\\i<j}}\operatorname*{sgn}%
\left[  i,I\right]  \operatorname*{sgn}\left[  j,I\right]  \frac
{\partial\omega^{\left[  iI\right]  }}{\partial x_{i}}\frac{\partial
\omega^{\left[  jI\right]  }}{\partial x_{j}} \label{k-1delta}%
\end{equation}%
\begin{equation}
\left\vert \delta\omega\right\vert ^{2}=\sum_{I\in\mathcal{T}^{k}}\sum_{i\in
I}\left(  \frac{\partial\omega^{I}}{\partial x_{i}}\right)  ^{2}+2\sum
_{I\in\mathcal{T}^{k+1}}\sum_{\substack{i,j\in I\\i<j}}\operatorname*{sgn}%
\left[  i,I_{\widehat{ij}}\right]  \operatorname*{sgn}\left[
j,I_{\widehat{ij}}\right]  \frac{\partial\omega^{I_{\widehat{j}}}}{\partial
x_{i}}\frac{\partial\omega^{I_{\widehat{i}}}}{\partial x_{j}}.
\label{k+1delta}%
\end{equation}
Appealing to (\ref{k+1d}) and (\ref{k+1delta}), we infer that%
\begin{align*}
\left\vert d\omega\right\vert ^{2}  &  +\left\vert \delta\omega\right\vert
^{2}-\left\vert \nabla\omega\right\vert ^{2}\\
&  =2\sum_{I\in\mathcal{T}^{k+1}}\sum_{\substack{i,j\in I\\i<j}}\left(
\operatorname*{sgn}\left[  i,I_{\widehat{i}}\right]  \operatorname*{sgn}%
\left[  j,I_{\widehat{j}}\right]  \frac{\partial\omega^{I_{\widehat{i}}}%
}{\partial x_{i}}\frac{\partial\omega^{I_{\widehat{j}}}}{\partial x_{j}%
}+\operatorname*{sgn}\left[  i,I_{\widehat{ij}}\right]  \operatorname*{sgn}%
\left[  j,I_{\widehat{ij}}\right]  \frac{\partial\omega^{I_{\widehat{j}}}%
}{\partial x_{i}}\frac{\partial\omega^{I_{\widehat{i}}}}{\partial x_{j}%
}\right)  .
\end{align*}
Invoking (\ref{k-1d}) and (\ref{k-1delta}), we find%
\begin{align*}
\left\vert d\omega\right\vert ^{2}  &  +\left\vert \delta\omega\right\vert
^{2}-\left\vert \nabla\omega\right\vert ^{2}\smallskip\\
&  =\sum_{I\in\mathcal{T}^{k-1}}\sum_{\substack{i,j\notin I\\i<j}}\left(
\operatorname*{sgn}\left[  i,I\right]  \operatorname*{sgn}\left[  j,I\right]
\frac{\partial\omega^{\left[  iI\right]  }}{\partial x_{i}}\frac
{\partial\omega^{\left[  jI\right]  }}{\partial x_{j}}+\operatorname*{sgn}%
\left[  i,\left[  jI\right]  \right]  \operatorname*{sgn}\left[  j,\left[
iI\right]  \right]  \frac{\partial\omega^{\left[  jI\right]  }}{\partial
x_{i}}\frac{\partial\omega^{\left[  iI\right]  }}{\partial x_{j}}\right)  .
\end{align*}
Using Lemma \ref{sign lemma}, the last two identities establish
(\ref{pointwise identiy k+1}) and (\ref{pointwise identiy k-1}),
respectively.\smallskip
\end{proof}

\begin{lemma}
\label{afterintegrationbypartslipschitz} Let $\Omega\subset\mathbb{R}^{n}$ be
a bounded open Lipschitz set and $\omega\in C^{1}\left(  \overline{\Omega
};\Lambda^{k}\right)  .$ Then%
\begin{equation}
\left\Vert d\omega\right\Vert ^{2}+\left\Vert \delta\omega\right\Vert
^{2}-\left\Vert \nabla\omega\right\Vert ^{2}=\int_{\partial\Omega}\left\langle
\nu\wedge\omega;d\omega\right\rangle -\int_{\partial\Omega}\sum_{I\in
\mathcal{T}^{k+1}}\sum_{i,j\in I}\operatorname*{sgn}\left[  i,I_{\widehat{i}%
}\right]  \operatorname*{sgn}\left[  j,I_{\widehat{j}}\right]  \omega
^{I_{\widehat{j}}}\,\nu^{i}\,\frac{\partial\omega^{I_{\widehat{i}}}}{\partial
x_{j}} \label{tangent k+1 version}%
\end{equation}%
\begin{equation}
\left\Vert d\omega\right\Vert ^{2}+\left\Vert \delta\omega\right\Vert
^{2}-\left\Vert \nabla\omega\right\Vert ^{2}=\int_{\partial\Omega}\left\langle
\nu\,\lrcorner\,\omega;\delta\omega\right\rangle -\int_{\partial\Omega}%
\sum_{I\in\mathcal{T}^{k-1}}\sum_{i,j\notin I}\operatorname*{sgn}\left[
i,I\right]  \operatorname*{sgn}\left[  j,I\right]  \omega^{\left[  iI\right]
}\,\nu^{j}\,\frac{\partial\omega^{\left[  jI\right]  }}{\partial x_{i}}\,.
\label{normal k-1 version}%
\end{equation}

\end{lemma}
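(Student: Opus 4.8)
The plan is to start from the pointwise identity of Lemma~\ref{algebraic identity} and integrate it over $\Omega$, then recognize the resulting bulk integrals as divergences so that the divergence theorem converts them into boundary integrals. Concretely, to obtain (\ref{tangent k+1 version}) I would begin with (\ref{pointwise identiy k+1}) and observe that each summand
\[
\frac{\partial\omega^{I_{\widehat{i}}}}{\partial x_{i}}\frac{\partial\omega^{I_{\widehat{j}}}}{\partial x_{j}}-\frac{\partial\omega^{I_{\widehat{j}}}}{\partial x_{i}}\frac{\partial\omega^{I_{\widehat{i}}}}{\partial x_{j}}
\]
is a Jacobian-type expression which can be written, using $\partial_{ij}\omega=\partial_{ji}\omega$ (here $\omega\in C^{1}$ with enough smoothness, or one argues by density), as a divergence in the variables $x_i,x_j$ of a quantity involving $\omega^{I_{\widehat{j}}}$ and first derivatives of $\omega^{I_{\widehat{i}}}$. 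This is the standard null-Lagrangian trick: the integrand is a linear combination of $2\times2$ minors of the gradient, and each such minor integrates by parts to a boundary term.

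The key step is therefore the integration by parts itself. For the first identity I would integrate
\[
\frac{\partial\omega^{I_{\widehat{i}}}}{\partial x_{i}}\frac{\partial\omega^{I_{\widehat{j}}}}{\partial x_{j}}-\frac{\partial\omega^{I_{\widehat{j}}}}{\partial x_{i}}\frac{\partial\omega^{I_{\widehat{i}}}}{\partial x_{j}}
=\frac{\partial}{\partial x_j}\!\left(\frac{\partial\omega^{I_{\widehat{i}}}}{\partial x_i}\,\omega^{I_{\widehat{j}}}\right)-\frac{\partial}{\partial x_i}\!\left(\frac{\partial\omega^{I_{\widehat{i}}}}{\partial x_j}\,\omega^{I_{\widehat{j}}}\right),
\]
apply the divergence theorem on the bounded Lipschitz set $\Omega$, and collect the normal components $\nu^i,\nu^j$. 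One family of the boundary terms (those in which the derivative $\partial\omega^{I_{\widehat{i}}}/\partial x_j$ survives, paired with $\nu^i$) produces exactly the second integral on the right of (\ref{tangent k+1 version}); the other family, after summing over $I\in\mathcal{T}^{k+1}$ and $i\in I$, must be reorganized back into the intrinsic object $\langle\nu\wedge\omega;d\omega\rangle$. Verifying this reassembly is where the sign bookkeeping from Lemma~\ref{sign lemma} and the $\operatorname{sgn}[\,\cdot\,]$ notation does the real work: one expands $\nu\wedge\omega$ and $d\omega$ in the basis $dx^{I}$, $I\in\mathcal{T}^{k+1}$, and matches coefficients.

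The identity (\ref{normal k-1 version}) I would prove entirely analogously, starting instead from (\ref{pointwise identiy k-1}) and writing its summand as a divergence of a term built from $\omega^{[iI]}$ and derivatives of $\omega^{[jI]}$; the term pairing with $\delta\omega$ assembles into $\langle\nu\,\lrcorner\,\omega;\delta\omega\rangle$. Alternatively one can deduce (\ref{normal k-1 version}) from (\ref{tangent k+1 version}) by Hodge duality, replacing $\omega$ by $\ast\omega$ and using that the $\ast$ operator interchanges $d$ and $\delta$ as well as $\nu\wedge$ and $\nu\,\lrcorner\,$ up to signs.

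The main obstacle I anticipate is purely the combinatorial/sign management: making sure that after the integration by parts the "extra" boundary family regroups precisely into $\langle\nu\wedge\omega;d\omega\rangle$ (respectively $\langle\nu\,\lrcorner\,\omega;\delta\omega\rangle$) with no residual terms and correct signs. The divergence theorem on a Lipschitz domain is routine given $\omega\in C^{1}(\overline{\Omega})$, so the analytic content is light; the care lies in exploiting $\operatorname{sgn}[i,I_{\widehat{i}}]\operatorname{sgn}[j,I_{\widehat{j}}]$ consistently and in reconciling the range of summation (over ordered versus unordered pairs $i,j\in I$) with the symmetric double sum $\sum_{i,j\in I}$ that appears in the stated boundary terms.
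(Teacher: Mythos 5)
Your proposal follows essentially the same route as the paper: integrate the pointwise identity of Lemma \ref{algebraic identity}, rewrite each $2\times 2$ minor as the divergence $\frac{\partial}{\partial x_{j}}\bigl(\omega^{I_{\widehat{j}}}\,\partial_{x_i}\omega^{I_{\widehat{i}}}\bigr)-\frac{\partial}{\partial x_{i}}\bigl(\omega^{I_{\widehat{j}}}\,\partial_{x_j}\omega^{I_{\widehat{i}}}\bigr)$ (with the same density remark for $C^{1}$), apply the divergence theorem on the Lipschitz domain, and reassemble one family of boundary terms into $\left\langle \nu\wedge\omega;d\omega\right\rangle$ by absorbing the diagonal $i=j$ terms into the symmetric double sum. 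The sign bookkeeping and summation-range reconciliation you flag are exactly the content of the paper's Step 3, so the argument is correct and matches the paper's proof.
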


\begin{proof}
We divide the proof in three steps.\smallskip

\emph{Step 1.} Integrate the equations of Lemma \ref{algebraic identity} to
get%
\begin{equation}
\left\Vert d\omega\right\Vert ^{2}+\left\Vert \delta\omega\right\Vert
^{2}-\left\Vert \nabla\omega\right\Vert ^{2}=\int_{\Omega}\sum_{I\in
\mathcal{T}^{k+1}}\sum_{\substack{i,j\in I\\i\neq j}}\operatorname*{sgn}%
\left[  i,I_{\widehat{i}}\right]  \operatorname*{sgn}\left[  j,I_{\widehat{j}%
}\right]  \left(  \frac{\partial\omega^{I_{\widehat{i}}}}{\partial x_{i}}%
\frac{\partial\omega^{I_{\widehat{j}}}}{\partial x_{j}}-\frac{\partial
\omega^{I_{\widehat{j}}}}{\partial x_{i}}\frac{\partial\omega^{I_{\widehat{i}%
}}}{\partial x_{j}}\right)  \label{integrated version k+1}%
\end{equation}%
\begin{equation}
\left\Vert d\omega\right\Vert ^{2}+\left\Vert \delta\omega\right\Vert
^{2}-\left\Vert \nabla\omega\right\Vert ^{2}=\int_{\Omega}\sum_{I\in
\mathcal{T}^{k-1}}\sum_{\substack{i,j\notin I\\i<j}}\operatorname*{sgn}\left[
i,I\right]  \operatorname*{sgn}\left[  j,I\right]  \left(  \frac
{\partial\omega^{\left[  iI\right]  }}{\partial x_{i}}\frac{\partial
\omega^{\left[  jI\right]  }}{\partial x_{j}}-\frac{\partial\omega^{\left[
jI\right]  }}{\partial x_{i}}\frac{\partial\omega^{\left[  iI\right]  }%
}{\partial x_{j}}\right)  . \label{integrated version k-1}%
\end{equation}

\emph{Step 2.} Noting that (the following argument uses the fact that
$\omega\in C^{2}$ but by density the identity
(\ref{afterintegrationbypartslemma}) is valid for $\omega\in C^{1}$)%
\[
\frac{\partial\omega^{I_{\widehat{i}}}}{\partial x_{i}}\frac{\partial
\omega^{I_{\widehat{j}}}}{\partial x_{j}}-\frac{\partial\omega^{I_{\widehat{j}%
}}}{\partial x_{i}}\frac{\partial\omega^{I_{\widehat{i}}}}{\partial x_{j}%
}=\frac{\partial}{\partial x_{j}}\left(  \omega^{I_{\widehat{j}}}%
\,\frac{\partial\omega^{I_{\widehat{i}}}}{\partial x_{i}}\right)
-\frac{\partial}{\partial x_{i}}\left(  \omega^{I_{\widehat{j}}}%
\,\frac{\partial\omega^{I_{\widehat{i}}}}{\partial x_{j}}\right)
\]
we integrate by parts (\ref{integrated version k+1}), bearing in mind that
$\Omega$ is Lipschitz, to obtain%
\begin{equation}
\left\Vert d\omega\right\Vert ^{2}+\left\Vert \delta\omega\right\Vert
^{2}-\left\Vert \nabla\omega\right\Vert ^{2}=\int_{\partial\Omega}\sum
_{I\in\mathcal{T}^{k+1}}\sum_{\substack{i,j\in I\\i\neq j}}\operatorname*{sgn}%
\left[  i,I_{\widehat{i}}\right]  \operatorname*{sgn}\left[  j,I_{\widehat{j}%
}\right]  \left(  \omega^{I_{\widehat{j}}}\,\nu^{j}\,\frac{\partial
\omega^{I_{\widehat{i}}}}{\partial x_{i}}-\omega^{I_{\widehat{j}}}\,\nu
^{i}\,\frac{\partial\omega^{I_{\widehat{i}}}}{\partial x_{j}}\right)  .
\label{afterintegrationbypartslemma}%
\end{equation}

\emph{Step 3.} Since $\Omega$ is Lipschitz, $\nu$ is defined for a.e.
$x\in\partial\Omega.$ Thus, for a.e. $x\in\partial\Omega,$ we find%
\[
\nu\wedge\omega=\sum_{I\in\mathcal{T}^{k+1}}\left(  \sum_{j\in I}%
\operatorname*{sgn}\left[  j,I_{\widehat{j}}\right]  \nu^{j}\,\omega
^{I_{\widehat{j}}}\right)  dx^{I}.
\]
But, since $\omega\in C^{1}\left(  \overline{\Omega};\Lambda^{k}\right)  ,$
for every $x\in\overline{\Omega},$ we have%
\[
d\omega=\sum_{I\in\mathcal{T}^{k+1}}\left(  \sum_{i\in I}\operatorname*{sgn}%
\left[  i,I_{\widehat{i}}\right]  \frac{\partial\omega^{I_{\widehat{i}}}%
}{\partial x_{i}}\right)  dx^{I}.
\]
We therefore deduce the pointwise identity, for a.e. $x\in\partial\Omega,$%
\begin{align*}
\left\langle \nu\wedge\omega;d\omega\right\rangle  &  =\sum_{I\in
\mathcal{T}^{k+1}}\sum_{i,j\in I}\operatorname*{sgn}\left[  i,I_{\widehat{i}%
}\right]  \operatorname*{sgn}\left[  j,I_{\widehat{j}}\right]  \omega
^{I_{\widehat{j}}}\,\nu^{j}\,\frac{\partial\omega^{I_{\widehat{i}}}}{\partial
x_{i}}\smallskip\\
&  =\sum_{I\in\mathcal{T}^{k+1}}\sum_{i\in I}\omega^{I_{\widehat{i}}}\,\nu
^{i}\,\frac{\partial\omega^{I_{\widehat{i}}}}{\partial x_{i}}+\sum
_{I\in\mathcal{T}^{k+1}}\sum_{\substack{i,j\in I\\i\neq j}}\operatorname*{sgn}%
\left[  i,I_{\widehat{i}}\right]  \operatorname*{sgn}\left[  j,I_{\widehat{j}%
}\right]  \omega^{I_{\widehat{j}}}\,\nu^{j}\,\frac{\partial\omega
^{I_{\widehat{i}}}}{\partial x_{i}}\,.
\end{align*}
We then have%
\begin{equation}
\sum_{I\in\mathcal{T}^{k+1}}\sum_{\substack{i,j\in I\\i\neq j}%
}\operatorname*{sgn}\left[  i,I_{\widehat{i}}\right]  \operatorname*{sgn}%
\left[  j,I_{\widehat{j}}\right]  \omega^{I_{\widehat{j}}}\,\nu^{j}%
\,\frac{\partial\omega^{I_{\widehat{i}}}}{\partial x_{i}}=\left\langle
\nu\wedge\omega;d\omega\right\rangle -\sum_{I\in\mathcal{T}^{k+1}}\sum_{i\in
I}\omega^{I_{\widehat{i}}}\,\nu^{i}\,\frac{\partial\omega^{I_{\widehat{i}}}%
}{\partial x_{i}}\,. \label{first term byparts}%
\end{equation}
Substituting (\ref{first term byparts}) in (\ref{afterintegrationbypartslemma}%
), we obtain (\ref{tangent k+1 version}). Analogous calculations yield
(\ref{normal k-1 version}), starting from integration by parts of
(\ref{integrated version k-1}).\smallskip
\end{proof}

We now prove a theorem for piecewise $C^{2}$ Lipschitz domains, which can be
viewed as a generalization of Theorem 3.1.1.2 in \cite{Grisvard 1985} valid
for $k=1.$

\begin{theorem}
\label{piecewisesmoothformula}Let $\Omega\subset\mathbb{R}^{n}$ be a bounded
open Lipschitz set with piecewise $C^{2}$ boundary, i.e. $\partial
\Omega\smallskip=\cup_{s=1}^{N}\overline{\Gamma_{s}}\,,$ where the $\Gamma
_{s}$ are $C^{2}$ and relatively open subset of $\partial\Omega$ and
$\partial\Omega\setminus\cup_{s=1}^{N}\Gamma_{s}$\smallskip\ has zero surface
measure. Let $E_{1},\cdots,E_{n-1}$ be an orthonormal frame field of principal
directions of $\cup_{s=1}^{N}\Gamma_{s}$\smallskip\ with associated principal
curvatures $\gamma_{1},\cdots,\gamma_{n-1}\,.$ Then%
\begin{equation}
\left\Vert d\omega\right\Vert ^{2}+\left\Vert \delta\omega\right\Vert
^{2}-\left\Vert \nabla\omega\right\Vert ^{2}=\sum_{s=1}^{N}\int_{\Gamma_{s}%
}\left(  \sum_{l=1}^{n-1}\gamma_{l}\left\vert E_{l}\wedge\omega\right\vert
^{2}\right)  \quad\text{for every }\omega\in C_{T}^{1}\left(  \overline
{\Omega};\Lambda^{k}\right)  \label{tangentialcondition}%
\end{equation}%
\begin{equation}
\left\Vert d\omega\right\Vert ^{2}+\left\Vert \delta\omega\right\Vert
^{2}-\left\Vert \nabla\omega\right\Vert ^{2}=\sum_{s=1}^{N}\int_{\Gamma_{s}%
}\left(  \sum_{l=1}^{n-1}\gamma_{l}\left\vert E_{l}\,\lrcorner\,\omega
\right\vert ^{2}\right)  \quad\text{for every }\omega\in C_{N}^{1}\left(
\overline{\Omega};\Lambda^{k}\right)  . \label{normalcondition}%
\end{equation}

\end{theorem}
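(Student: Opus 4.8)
The plan is to reduce the global integral identity to a sum of
boundary-piece contributions, each of which is computed pointwise using the
principal-curvature machinery already developed. First I would invoke Lemma
\ref{afterintegrationbypartslipschitz}, which is valid for any bounded open
Lipschitz set and therefore applies to our piecewise $C^2$ domain $\Omega.$
Since $\partial\Omega\setminus\cup_{s=1}^N\Gamma_s$ has zero surface measure,
the boundary integrals split as $\int_{\partial\Omega}=\sum_{s=1}^N\int
_{\Gamma_s}.$ The task then becomes identifying, on each smooth piece
$\Gamma_s,$ the pointwise value of the boundary integrand appearing in
(\ref{tangent k+1 version}) for the tangential case (and (\ref{normal k-1
version}) for the normal case).

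Next I would recognize that the right-hand side of the fundamental identity
(\ref{Equation 1 dans CT=1}) is exactly the object we need: comparing the two
expressions for $\|d\omega\|^2+\|\delta\omega\|^2-\|\nabla\omega\|^2$ shows
that on each $C^2$ piece $\Gamma_s$ the boundary integrand from Lemma
\ref{afterintegrationbypartslipschitz} must agree with $\langle L^\nu(\nu
\wedge\omega);\nu\wedge\omega\rangle+\langle K^\nu(\nu\lrcorner\omega);\nu
\lrcorner\omega\rangle.$ For $\omega\in C_T^1,$ we have $\nu\wedge\omega=0$ on
$\partial\Omega,$ so the $L^\nu$ term drops and only the $K^\nu$ term survives;
dually, for $\omega\in C_N^1$ only the $L^\nu$ term survives. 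The key step is
then to apply Lemma \ref{lemma:L nu equal bilinear form with curvatures and
dual version}: its formula (\ref{eq:K nu in terms of gamma i}) expresses
$\langle K^\nu(\nu\lrcorner\omega);\nu\lrcorner\omega\rangle$ as a sum over
principal curvatures. To reach the compact form $\sum_{l=1}^{n-1}\gamma_l|E_l
\wedge\omega|^2$ stated in (\ref{tangentialcondition}), I would expand $\omega$
in the orthonormal basis $\{E_{i_1\cdots i_k}\}\cup\{\nu\wedge E_{i_1\cdots
i_{k-1}}\},$ use $\nu\wedge\omega=0$ to kill the purely tangential components,
and then reorganize the double sum $\sum_{1\le i_1<\cdots<i_{k-1}\le n-1}
\langle\omega;\nu\wedge E_{i_1\cdots i_{k-1}}\rangle^2\sum_{j\notin\{i_1,\cdots
,i_{k-1}\}}\gamma_j$ by interchanging the order of summation so that each
$\gamma_l$ is collected together with the coefficient of $|E_l\wedge\omega|^2.$

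The main obstacle I anticipate is purely bookkeeping: verifying the
combinatorial identity that $\sum_{\{i_1,\cdots,i_{k-1}\}}\langle\omega;\nu
\wedge E_{i_1\cdots i_{k-1}}\rangle^2\sum_{j\notin\{i_1,\cdots,i_{k-1}\}}
\gamma_j$ equals $\sum_{l=1}^{n-1}\gamma_l|E_l\wedge\omega|^2.$ To do this I
would fix an index $l$ and compute the coefficient of $\gamma_l$ on each side.
On the left, $\gamma_l$ appears precisely for those multiindices $\{i_1,\cdots
,i_{k-1}\}$ not containing $l,$ so its coefficient is $\sum_{l\notin\{i_1,
\cdots,i_{k-1}\}}\langle\omega;\nu\wedge E_{i_1\cdots i_{k-1}}\rangle^2.$ On
the right, using that $E_l\wedge\omega$ picks out exactly the components of
$\omega$ along basis elements $\nu\wedge E_{i_1\cdots i_{k-1}}$ with $l\notin
\{i_1,\cdots,i_{k-1}\}$ (the tangential components having been eliminated by
$\nu\wedge\omega=0,$ and the wedge with $E_l$ annihilating any factor already
containing $E_l$), together with the orthonormality established in Lemma
\ref{Lemme 0 algebrique}, one obtains the same coefficient. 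The normal case
(\ref{normalcondition}) follows either by the identical argument applied to
$L^\nu$ via (\ref{eq:L nu in terms of gamma i}), or immediately by Hodge
duality as in Remark \ref{Remarque apres Thm equiv pour CT}. Finally I would
note that the hypothesis $\omega\in C_T^1(\overline\Omega;\Lambda^k)$ is enough
regularity to justify all the integrations by parts on each Lipschitz piece,
completing the proof.
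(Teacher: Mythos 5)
Your skeleton is right at both ends: starting from Lemma \ref{afterintegrationbypartslipschitz} and splitting $\int_{\partial\Omega}=\sum_{s}\int_{\Gamma_{s}}$ is exactly how the paper begins, and your final combinatorial rearrangement (collecting the coefficient of each $\gamma_{l}$ and using that, when $\nu\wedge\omega=0$, $\left\vert E_{l}\wedge\omega\right\vert ^{2}=\sum_{l\notin I}\left\langle \omega;\nu\wedge E_{I}\right\rangle ^{2}$) is correct. The gap is in the middle step. You assert that ``comparing the two expressions'' for $\left\Vert d\omega\right\Vert ^{2}+\left\Vert \delta\omega\right\Vert ^{2}-\left\Vert \nabla\omega\right\Vert ^{2}$ shows the boundary integrand of (\ref{tangent k+1 version}) agrees pointwise on each $\Gamma_{s}$ with $\left\langle L^{\nu}\left(  \nu\wedge\omega\right)  ;\nu\wedge\omega\right\rangle +\left\langle K^{\nu}\left(  \nu\,\lrcorner\,\omega\right)  ;\nu\,\lrcorner\,\omega\right\rangle$. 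This does not follow: equality of two integrals over $\partial\Omega$ gives no pointwise information about the integrands, and, more fundamentally, the identity (\ref{Equation 1 dans CT=1}) is only available for \emph{smooth} domains, so it cannot be invoked for the piecewise $C^{2}$ set $\Omega$ at all --- extending that identity to such domains is precisely what Theorem \ref{piecewisesmoothformula} accomplishes. A localization argument (testing against cutoffs $\psi$) does not rescue the comparison either, because the integrand of (\ref{tangent k+1 version}) is linear in $\nabla\omega$, so replacing $\omega$ by $\psi\omega$ produces extra $\nabla\psi$ terms that prevent the usual ``$\int f\psi^{2}=\int g\psi^{2}$ for all $\psi$'' conclusion.

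What the paper does instead, and what your argument is missing, is a direct pointwise computation of that integrand on each $\Gamma_{s}$: writing $e_{j}=\sum_{l}E_{l}^{j}E_{l}+\nu^{j}\nu$, one splits $A=B+C$ into a normal-derivative part $B=\left\langle \nu\wedge\omega;\nu\wedge\frac{\partial\omega}{\partial\nu}\right\rangle$ and a tangential part $C=\sum_{l}\left\langle E_{l}\wedge\omega;\frac{\partial}{\partial E_{l}}\left(  \nu\wedge\omega\right)  \right\rangle -\sum_{l}\gamma_{l}\left\vert E_{l}\wedge\omega\right\vert ^{2}$, the curvatures entering through $\partial\nu^{i}/\partial E_{l}=\gamma_{l}E_{l}^{i}$. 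Since $\nu\wedge\omega\equiv0$ on $\Gamma_{s}$, both $B$ and the first sum in $C$ vanish (the latter because tangential derivatives of a quantity identically zero along $\Gamma_{s}$ are zero), leaving exactly $\sum_{l}\gamma_{l}\left\vert E_{l}\wedge\omega\right\vert ^{2}$. If you replace your ``comparison'' step by this computation (or by any other genuinely local derivation of the integrand), the rest of your outline goes through; as written, the central identification is unproved and the proof is incomplete.
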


\begin{remark}
\emph{(i)} By standard regularization, the theorem is valid for $\omega\in
W_{T}^{1,2}\left(  \Omega;\Lambda^{k}\right)  $ (respectively $\omega\in
W_{N}^{1,2}\left(  \Omega;\Lambda^{k}\right)  $) if $\partial\Omega$ is
(fully) $C^{2}.$\smallskip

\emph{(ii)} Note that the two identities above are the same as those appearing
at the end of Theorem \ref{thm:fund identity with curvatures}.
\end{remark}

\begin{proof}
\emph{Step 1.} We first show (\ref{tangentialcondition}). Since $\partial
\Omega\setminus\cup_{s=1}^{N}\Gamma_{s}$ has zero surface measure, we obtain
from (\ref{tangent k+1 version})%
\begin{equation}
\left.
\begin{array}
[c]{l}%
\left\Vert d\omega\right\Vert ^{2}+\left\Vert \delta\omega\right\Vert
^{2}-\left\Vert \nabla\omega\right\Vert ^{2}\smallskip\\
=%
{\displaystyle\int_{\partial\Omega}}
\left\langle \nu\wedge\omega;d\omega\right\rangle -%
{\displaystyle\sum_{s=1}^{N}}
{\displaystyle\int_{\Gamma_{s}}}
{\displaystyle\sum_{I\in\mathcal{T}^{k+1}}}
{\displaystyle\sum_{i,j\in I}}
\operatorname*{sgn}\left[  i,I_{\widehat{i}}\right]  \operatorname*{sgn}%
\left[  j,I_{\widehat{j}}\right]  \omega^{I_{\widehat{j}}}\,\nu^{i}%
\,\frac{\partial\omega^{I_{\widehat{i}}}}{\partial x_{j}}\,.
\end{array}
\right.  \label{tangent equation}%
\end{equation}
We argue on each of the $\Gamma_{s}\,.$ Since $E_{1},\cdots,E_{n-1}$ denote a
frame of tangent vectors at each point of $\Gamma_{s}\,,$ we can write
$e_{j}=\sum_{l=1}^{n-1}E_{l}^{j}\,E_{l}+\nu^{j}\,\nu.$ Thus, for any
$j=1,\cdots,n,$%
\begin{equation}
\frac{\partial\omega^{I_{\widehat{i}}}}{\partial x_{j}}=\sum_{l=1}^{n-1}%
E_{l}^{j}\,\frac{\partial\omega^{I_{\widehat{i}}}}{\partial E_{l}}+\nu
^{j}\,\frac{\partial\omega^{I_{\widehat{i}}}}{\partial\nu}\,.
\label{Equation: derivee en fonction tangente et normale}%
\end{equation}

\emph{Step 1.1.} We set%
\[
A=\sum_{I\in\mathcal{T}^{k+1}}\sum_{i,j\in I}\operatorname*{sgn}\left[
i,I_{\widehat{i}}\right]  \operatorname*{sgn}\left[  j,I_{\widehat{j}}\right]
\omega^{I_{\widehat{j}}}\,\nu^{i}\,\frac{\partial\omega^{I_{\widehat{i}}}%
}{\partial x_{j}}%
\]
and note, in view of (\ref{Equation: derivee en fonction tangente et normale}%
), that $A=B+C$ where%
\[
B=\sum_{I\in\mathcal{T}^{k+1}}\left(  \sum_{i\in I}\operatorname*{sgn}\left[
i,I_{\widehat{i}}\right]  \nu^{i}\,\frac{\partial\omega^{I_{\widehat{i}}}%
}{\partial\nu}\right)  \left(  \sum_{j\in I}\operatorname*{sgn}\left[
j,I_{\widehat{j}}\right]  \nu^{j}\,\omega^{I_{\widehat{j}}}\right)
\]%
\[
C=\sum_{I\in\mathcal{T}^{k+1}}\sum_{i,j\in I}\operatorname*{sgn}\left[
i,I_{\widehat{i}}\right]  \operatorname*{sgn}\left[  j,I_{\widehat{j}}\right]
\omega^{I_{\widehat{j}}}\,\nu^{i}\left(  \sum_{l=1}^{n-1}E_{l}^{j}%
\,\frac{\partial\omega^{I_{\widehat{i}}}}{\partial E_{l}}\right)  .
\]

(i) We first observe that%
\[
B=\sum_{I\in\mathcal{T}^{k+1}}\left(  \nu\wedge\frac{\partial\omega}%
{\partial\nu}\right)  ^{I}\left(  \nu\wedge\omega\right)  ^{I}=\left\langle
\nu\wedge\omega;\nu\wedge\frac{\partial\omega}{\partial\nu}\right\rangle .
\]

(ii) We next prove that%
\[
C=\sum_{l=1}^{n-1}\left\langle E_{l}\wedge\omega;\frac{\partial}{\partial
E_{l}}\left(  \nu\wedge\omega\right)  \right\rangle -\sum_{l=1}^{n-1}%
\gamma_{l}\left\vert E_{l}\wedge\omega\right\vert ^{2}.
\]
Indeed note that, for any $I\in\mathcal{T}^{k+1}$ and any $l=1,\cdots,n-1,$%
\[
\frac{\partial}{\partial E_{l}}\left(  \nu\wedge\omega\right)  ^{I}%
=\frac{\partial}{\partial E_{l}}\left(  \sum_{i\in I}\operatorname*{sgn}%
\left[  i,I_{\widehat{i}}\right]  \nu^{i}\,\omega^{I_{\widehat{i}}}\right)
=\sum_{i\in I}\operatorname*{sgn}\left[  i,I_{\widehat{i}}\right]  \nu
^{i}\,\frac{\partial\omega^{I_{\widehat{i}}}}{\partial E_{l}}+\sum_{i\in
I}\operatorname*{sgn}\left[  i,I_{\widehat{i}}\right]  \omega^{I_{\widehat{i}%
}}\,\frac{\partial\nu^{i}}{\partial E_{l}}\,.
\]
For any $j\in I,$ multiplying by $\operatorname*{sgn}\left[  j,I_{\widehat{j}%
}\right]  \omega^{I_{\widehat{j}}}E_{l}^{j}$ and summing over $l=1,\cdots,n-1$
and $j\in I,$ we deduce (recalling that $\gamma_{l}\,E_{l}^{i}=\partial\nu
^{i}/\partial E_{l}$) that%
\begin{align*}
C^{I}  &  =\sum_{i,j\in I}\operatorname*{sgn}\left[  i,I_{\widehat{i}}\right]
\operatorname*{sgn}\left[  j,I_{\widehat{j}}\right]  \omega^{I_{\widehat{j}}%
}\,\nu^{i}\left(  \sum_{l=1}^{n-1}E_{l}^{j}\,\frac{\partial\omega
^{I_{\widehat{i}}}}{\partial E_{l}}\right)  \smallskip\\
&  =\sum_{j\in I}\sum_{l=1}^{n-1}\operatorname*{sgn}\left[  j,I_{\widehat{j}%
}\right]  \omega^{I_{\widehat{j}}}E_{l}^{j}\,\frac{\partial}{\partial E_{l}%
}\left(  \nu\wedge\omega\right)  ^{I}-\sum_{i,j\in I}\operatorname*{sgn}%
\left[  i,I_{\widehat{i}}\right]  \operatorname*{sgn}\left[  j,I_{\widehat{j}%
}\right]  \omega^{I_{\widehat{i}}}\omega^{I_{\widehat{j}}}\left(  \sum
_{l=1}^{n-1}E_{l}^{j}\,\frac{\partial\nu^{i}}{\partial E_{l}}\right)
\smallskip\\
&  =\sum_{l=1}^{n-1}\left(  E_{l}\wedge\omega\right)  ^{I}\frac{\partial
}{\partial E_{l}}\left(  \nu\wedge\omega\right)  ^{I}-\sum_{i,j\in
I}\operatorname*{sgn}\left[  i,I_{\widehat{i}}\right]  \operatorname*{sgn}%
\left[  j,I_{\widehat{j}}\right]  \omega^{I_{\widehat{i}}}\omega
^{I_{\widehat{j}}}\left(  \sum_{l=1}^{n-1}\gamma_{l}\,E_{l}^{j}\,E_{l}%
^{i}\right)
\end{align*}
and thus%
\begin{align*}
C^{I}  &  =\sum_{l=1}^{n-1}\left(  E_{l}\wedge\omega\right)  ^{I}%
\frac{\partial}{\partial E_{l}}\left(  \nu\wedge\omega\right)  ^{I}-\sum
_{l=1}^{n-1}\gamma_{l}\left(  \sum_{i\in I}\operatorname*{sgn}\left[
i,I_{\widehat{i}}\right]  E_{l}^{i}\,\omega^{I_{\widehat{i}}}\right)  \left(
\sum_{j\in I}\operatorname*{sgn}\left[  j,I_{\widehat{j}}\right]  E_{l}%
^{j}\,\omega^{I_{\widehat{j}}}\right)  \smallskip\\
&  =\sum_{l=1}^{n-1}\left(  E_{l}\wedge\omega\right)  ^{I}\frac{\partial
}{\partial E_{l}}\left(  \nu\wedge\omega\right)  ^{I}-\sum_{l=1}^{n-1}%
\gamma_{l}\left(  E_{l}\wedge\omega\right)  ^{I}\left(  E_{l}\wedge
\omega\right)  ^{I}%
\end{align*}
which is our claim, since $C=\sum_{I\in\mathcal{T}^{k+1}}C^{I}.$\smallskip

Combining (i) and (ii) we have obtained that%
\[
A=\left\langle \nu\wedge\omega;\nu\wedge\frac{\partial\omega}{\partial\nu
}\right\rangle +\sum_{l=1}^{n-1}\left\langle E_{l}\wedge\omega;\frac{\partial
}{\partial E_{l}}\left(  \nu\wedge\omega\right)  \right\rangle -\sum
_{l=1}^{n-1}\gamma_{l}\left\vert E_{l}\wedge\omega\right\vert ^{2}.
\]

\emph{Step 1.2.} Combining (\ref{tangent equation}) and Step 1.1, we just
proved that%
\begin{align*}
&  \left\Vert d\omega\right\Vert ^{2}+\left\Vert \delta\omega\right\Vert
^{2}-\left\Vert \nabla\omega\right\Vert ^{2}\smallskip\\
&  =\sum_{s=1}^{N}\int_{\Gamma_{s}}\left[  \left\langle \nu\wedge
\omega;d\omega\right\rangle -\left\langle \nu\wedge\omega;\nu\wedge
\frac{\partial\omega}{\partial\nu}\right\rangle -\sum_{l=1}^{n-1}\left\langle
E_{l}\wedge\omega;\frac{\partial}{\partial E_{l}}\left(  \nu\wedge
\omega\right)  \right\rangle +\sum_{l=1}^{n-1}\gamma_{l}\left\vert E_{l}%
\wedge\omega\right\vert ^{2}\right]  .
\end{align*}
Thus, if $\nu\wedge\omega=0$ on $\Gamma_{s}$ for each $s=1,\cdots,N,$ we
obtain (\ref{tangentialcondition}).\smallskip

\emph{Step 2.} Analogous calculations, starting from (\ref{normal k-1 version}%
), establishes the identity%
\begin{align*}
&  \left\Vert d\omega\right\Vert ^{2}+\left\Vert \delta\omega\right\Vert
^{2}-\left\Vert \nabla\omega\right\Vert ^{2}\smallskip\\
&  =\sum_{s=1}^{N}\int_{\Gamma_{s}}\left[  \left\langle \nu\,\lrcorner
\,\omega;\delta\omega\right\rangle -\left\langle \nu\,\lrcorner\,\omega
;\nu\,\lrcorner\,\frac{\partial\omega}{\partial\nu}\right\rangle -\sum
_{l=1}^{n-1}\left\langle E_{l}\,\lrcorner\,\omega;\frac{\partial}{\partial
E_{l}}\left(  \nu\,\lrcorner\,\omega\right)  \right\rangle +\sum_{l=1}%
^{n-1}\gamma_{l}\left\vert E_{l}\,\lrcorner\,\omega\right\vert ^{2}\right]  .
\end{align*}
Using that $\nu\,\lrcorner\,\omega=0$ on $\Gamma_{s}$ for each $s=1,\cdots,N,$
this yields (\ref{normalcondition}). This finishes the proof.\smallskip
\end{proof}

We finally are ready to prove the theorem.\smallskip

\begin{proof}
(Theorem \ref{Theoreme polytopes}). By Theorem \ref{piecewisesmoothformula},
the result is immediate since for a generalized polytope, the principal
curvatures on every face are $0.$\smallskip
\end{proof}

Theorem \ref{piecewisesmoothformula} also immediately implies the following.

\begin{theorem}
Let $\Omega\subset\mathbb{R}^{n}$ be a bounded open Lipschitz set with
piecewise $C^{2}$ boundary, i.e. $\partial\Omega\smallskip=\cup_{s=1}%
^{N}\overline{\Gamma_{s}}\,,$ where the $\Gamma_{s}$ are $C^{2}$ and
relatively open subset of $\partial\Omega$ and $\partial\Omega\setminus
\cup_{s=1}^{N}\Gamma_{s}$\smallskip\ has zero surface measure. If the
principal curvatures are all nonnegative at every point on $\cup_{s=1}%
^{N}\Gamma_{s}\,,$ then%
\[
\left\Vert \nabla\omega\right\Vert ^{2}\leq\left\Vert d\omega\right\Vert
^{2}+\left\Vert \delta\omega\right\Vert ^{2},\quad\forall\,\omega\in C_{T}%
^{1}\left(  \overline{\Omega};\Lambda^{k}\right)  \cup C_{N}^{1}\left(
\overline{\Omega};\Lambda^{k}\right)  .
\]

\end{theorem}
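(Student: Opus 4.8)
The plan is to read the inequality straight off the exact curvature identity already established in Theorem \ref{piecewisesmoothformula}, so that all the real work --- the pointwise algebraic identity of Lemma \ref{algebraic identity}, the integration by parts of Lemma \ref{afterintegrationbypartslipschitz}, and the tangential/normal splitting performed in the proof of Theorem \ref{piecewisesmoothformula} --- is already behind us. What remains is purely a matter of checking a sign.

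First I would take $\omega\in C_T^1(\overline{\Omega};\Lambda^k)$ and invoke the tangential identity (\ref{tangentialcondition}), which gives
\[
\|d\omega\|^2+\|\delta\omega\|^2-\|\nabla\omega\|^2=\sum_{s=1}^N\int_{\Gamma_s}\left(\sum_{l=1}^{n-1}\gamma_l\,|E_l\wedge\omega|^2\right).
\]
Here the hypothesis enters exactly once: since every principal curvature $\gamma_l$ is nonnegative at each point of $\cup_{s=1}^N\Gamma_s$ and $|E_l\wedge\omega|^2\geq 0$ trivially, each summand is pointwise nonnegative, hence so is every integrand, hence so is the whole right-hand side. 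This yields $\|\nabla\omega\|^2\leq\|d\omega\|^2+\|\delta\omega\|^2$ for tangential fields. For $\omega\in C_N^1(\overline{\Omega};\Lambda^k)$ I would repeat the argument verbatim, now starting from the normal identity (\ref{normalcondition}) and using $|E_l\,\lrcorner\,\omega|^2\geq 0$; the same sign check closes this case, and the two cases together establish the theorem.

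There is no genuine obstacle to flag: the statement is an immediate corollary of Theorem \ref{piecewisesmoothformula}. The only conceptual point worth recording is that the Lipschitz corners, where $\nu$ and the principal directions fail to be defined, are irrelevant, because $\partial\Omega\setminus\cup_{s=1}^N\Gamma_s$ carries zero surface measure and has already been discarded in the very identities (\ref{tangentialcondition})--(\ref{normalcondition}) on which we rely.
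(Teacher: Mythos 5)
Your argument is exactly the paper's: the statement is presented there as an immediate consequence of Theorem \ref{piecewisesmoothformula}, read off from the identities (\ref{tangentialcondition}) and (\ref{normalcondition}) by observing that nonnegative principal curvatures make the boundary integrands nonnegative. The proposal is correct and matches the intended proof.
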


\begin{remark}
Note that unlike the case of smooth domains, here the hypothesis of all
principal curvatures being non-negative does not imply that the domain is
convex. For example, the domain given in polar coordinates by%
\[
\Omega=\left\{  \left(  r,\theta\right)  :\theta_{0}<\theta<2\pi-\theta
_{0},\;r\in\left[  0,1\right)  \right\}  \subset\mathbb{R}^{2},
\]
for some $0<\theta_{0}<\pi/2,$ is a piecewise $C^{2}$ Lipschitz domain which
satisfies the property, but is neither convex, nor can be approximated from
the inside by smooth $1-$convex domains since smooth $1-$convex domains are
necessarily convex. Hence, the result for such domains is not covered by the
result in \cite{Mitrea 2001}.
\end{remark}

For $k=1,$ Theorem \ref{piecewisesmoothformula} also immediately implies, as a
corollary, the following variant of Korn inequality with the precise constant,
which was already observed in Bauer-Pauly \cite{Bauer-Pauly}.

\begin{corollary}
\label{Corollaire Gaffney et Korn}Let $\Omega\subset\mathbb{R}^{n}$ be a
bounded open Lipschitz set with piecewise $C^{2}$ boundary, i.e.
$\partial\Omega\smallskip=\cup_{s=1}^{N}\overline{\Gamma_{s}}\,,$ where the
$\Gamma_{s}$ are $C^{2}$ and relatively open subset of $\partial\Omega$ and
$\partial\Omega\setminus\cup_{s=1}^{N}\Gamma_{s}$\smallskip\ has zero surface
measure. If the principal curvatures are all nonpositive at every point on
$\cup_{s=1}^{N}\Gamma_{s}\,,$ then%
\[
\left\Vert \nabla u\right\Vert ^{2}\leq2\left\Vert \nabla^{sym}u\right\Vert
^{2},\quad\forall\,u\in C_{T}^{1}\left(  \overline{\Omega};\Lambda^{1}\right)
\cup C_{N}^{1}\left(  \overline{\Omega};\Lambda^{1}\right)  ,
\]
where the symmetric gradient $\nabla^{sym}u$ is defined by
\[
\left(  \nabla^{sym}u\right)  _{ij}=\frac{1}{2}\left(  \frac{\partial u^{i}%
}{\partial x_{j}}+\frac{\partial u^{j}}{\partial x_{i}}\right)  ,\quad
\text{for }i,j=1,\cdots,n.
\]

\end{corollary}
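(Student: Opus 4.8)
The plan is to reduce the asserted Korn inequality to the Gaffney-type identity of Theorem \ref{piecewisesmoothformula} by means of the elementary pointwise splitting of the gradient into its symmetric and antisymmetric parts. Throughout I identify the $1$-form $\omega$ with the vector field $u$, so that $d\omega$ corresponds to $\operatorname{curl}u$ and $\left\vert \nabla\omega\right\vert =\left\vert \nabla u\right\vert .$

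First I would record the purely algebraic identity, valid pointwise for any $u\in C^{1}.$ Writing $\nabla u=\nabla^{sym}u+A$ with $A_{ij}=\tfrac{1}{2}(\partial_{j}u^{i}-\partial_{i}u^{j})$ the antisymmetric part, the orthogonality of symmetric and antisymmetric matrices gives $\left\vert \nabla u\right\vert ^{2}=\left\vert \nabla^{sym}u\right\vert ^{2}+\left\vert A\right\vert ^{2},$ while a direct count shows $\left\vert A\right\vert ^{2}=\tfrac{1}{2}\sum_{i<j}(\partial_{i}u^{j}-\partial_{j}u^{i})^{2}=\tfrac{1}{2}\left\vert d\omega\right\vert ^{2}.$ Hence, pointwise,
\[
\left\vert \nabla u\right\vert ^{2}=\left\vert \nabla^{sym}u\right\vert ^{2}+\tfrac{1}{2}\left\vert d\omega\right\vert ^{2},\qquad\text{equivalently}\qquad 2\left\vert \nabla^{sym}u\right\vert ^{2}-\left\vert \nabla u\right\vert ^{2}=\left\vert \nabla u\right\vert ^{2}-\left\vert d\omega\right\vert ^{2}.
\]
Integrating over $\Omega,$ the desired inequality $\left\Vert \nabla u\right\Vert ^{2}\leq 2\left\Vert \nabla^{sym}u\right\Vert ^{2}$ is therefore \emph{equivalent} to $\left\Vert d\omega\right\Vert ^{2}\leq\left\Vert \nabla u\right\Vert ^{2}.$

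Next I would invoke Theorem \ref{piecewisesmoothformula}. For $\omega\in C_{T}^{1}(\overline{\Omega};\Lambda^{1})$ it yields
\[
\left\Vert \nabla\omega\right\Vert ^{2}-\left\Vert d\omega\right\Vert ^{2}-\left\Vert \delta\omega\right\Vert ^{2}=-\sum_{s=1}^{N}\int_{\Gamma_{s}}\Bigl(\sum_{l=1}^{n-1}\gamma_{l}\,\left\vert E_{l}\wedge\omega\right\vert ^{2}\Bigr),
\]
and the analogous identity with $E_{l}\wedge\omega$ replaced by $E_{l}\,\lrcorner\,\omega$ for $\omega\in C_{N}^{1}(\overline{\Omega};\Lambda^{1}).$ Under the hypothesis that every principal curvature $\gamma_{l}$ is nonpositive on $\cup_{s}\Gamma_{s},$ the right-hand side is a sum of nonnegative terms, so in either case $\left\Vert d\omega\right\Vert ^{2}+\left\Vert \delta\omega\right\Vert ^{2}\leq\left\Vert \nabla\omega\right\Vert ^{2}.$ Discarding the nonnegative quantity $\left\Vert \delta\omega\right\Vert ^{2}$ gives $\left\Vert d\omega\right\Vert ^{2}\leq\left\Vert \nabla\omega\right\Vert ^{2}=\left\Vert \nabla u\right\Vert ^{2},$ which is exactly the reformulation obtained in the first step; this completes the argument.

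There is essentially no hard analytic step here: the content is entirely carried by Theorem \ref{piecewisesmoothformula}, and the corollary is a matter of bookkeeping. The two points requiring care are the correct normalization in the relation $\left\vert A\right\vert ^{2}=\tfrac{1}{2}\left\vert d\omega\right\vert ^{2}$ (equivalently, the constant $2$ in the statement), and the observation that the relevant hypothesis is \emph{nonpositivity} of the curvatures — the opposite sign from the preceding theorem — since here one needs the reverse Gaffney inequality $\left\Vert d\omega\right\Vert ^{2}+\left\Vert \delta\omega\right\Vert ^{2}\leq\left\Vert \nabla\omega\right\Vert ^{2}$ precisely in order to bound $\left\Vert d\omega\right\Vert ^{2}$ from above by $\left\Vert \nabla u\right\Vert ^{2}.$
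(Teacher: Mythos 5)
Your argument is correct and is essentially the paper's own proof: both rest on the reverse Gaffney inequality $\left\Vert d\omega\right\Vert ^{2}+\left\Vert \delta\omega\right\Vert ^{2}\leq\left\Vert \nabla\omega\right\Vert ^{2}$ supplied by Theorem \ref{piecewisesmoothformula} under nonpositive curvatures, combined with the pointwise identity $\left\vert \nabla u\right\vert ^{2}=\left\vert \nabla^{sym}u\right\vert ^{2}+\frac{1}{2}\left\vert \operatorname*{curl}u\right\vert ^{2}$. The only cosmetic difference is that you discard the $\left\Vert \delta\omega\right\Vert ^{2}$ term at the outset while the paper carries $\left\Vert \operatorname*{div}u\right\Vert ^{2}$ along and drops it at the end.
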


\begin{proof}
Theorem \ref{piecewisesmoothformula} in this case implies,%
\[
\left\Vert \nabla u\right\Vert ^{2}\geq\left\Vert \operatorname*{curl}%
u\right\Vert ^{2}+\left\Vert \operatorname*{div}u\right\Vert ^{2},\quad
\forall\,u\in C_{T}^{1}\left(  \overline{\Omega};\Lambda^{1}\right)  \cup
C_{N}^{1}\left(  \overline{\Omega};\Lambda^{1}\right)  .
\]
Integrating the pointwise identity%
\[
\left\vert \nabla u\right\vert ^{2}=\left\vert \nabla^{sym}u\right\vert
^{2}+\frac{1}{2}\left\vert \operatorname*{curl}u\right\vert ^{2}%
\]
and combining with the inequality above gives,%
\[
\left\Vert \nabla^{sym}u\right\Vert ^{2}-\frac{1}{2}\left\Vert \nabla
u\right\Vert ^{2}=\frac{1}{2}\left(  \left\Vert \nabla u\right\Vert
^{2}-\left\Vert \operatorname*{curl}u\right\Vert ^{2}\right)  \geq\frac{1}%
{2}\left\Vert \operatorname*{div}u\right\Vert ^{2}\geq0.
\]
This completes the proof.
\end{proof}

\section{Appendix: an integral identity for differential forms}

We recall that $\Omega\subset\mathbb{R}^{n}$ is a bounded open smooth set with
exterior unit normal $\nu.$ When we say that $E_{1},\cdots,E_{n-1}$ is an
\emph{orthonormal frame field of principal directions} of $\partial\Omega$
with associated principal curvatures $\gamma_{1},\cdots,\gamma_{n-1}\,,$ we
mean that $\left\{  \nu,E_{1},\cdots,E_{n-1}\right\}  $ form an orthonormal
frame field of $\mathbb{R}^{n}$ and, for every $1\leq i\leq n-1,$%
\[
\sum_{j=1}^{n}\left(  E_{i}^{j}\nu_{x_{j}}^{l}\right)  =\gamma_{i}\,E_{i}%
^{l}\quad\Rightarrow\quad\gamma_{i}=\sum_{j,l=1}^{n}\left(  E_{i}^{l}E_{i}%
^{j}\nu_{x_{j}}^{l}\right)  .
\]
The following proposition was used only implicitly and is another version of
the identity obtained in \cite{Csato-Dac 2102} (see also Theorem 5.7 in
\cite{Csato-Dac-Kneuss(livre)}). We first state and prove it in the Euclidean
setting and then for general Riemannian manifolds, using then the notation of
differential geometry and referring to \cite{Csato 2013}.

\begin{theorem}
\label{thm:fund identity with curvatures}Let $0\leq k\leq n.$ Let
$E_{1},\cdots,E_{n-1}$ be an orthonormal frame field of principal directions
of $\partial\Omega$ with associated principal curvatures $\gamma_{1}%
,\cdots,\gamma_{n-1}\,.$ Then every $\alpha,\,\beta\in C^{1}\left(
\overline{\Omega};\Lambda^{k}\right)  $ satisfy the equation%
\begin{align*}
&  \int_{\Omega}\left(  \left\langle d\alpha;d\beta\right\rangle
+\langle\delta\alpha;\delta\beta\rangle-\langle\nabla\alpha;\nabla\beta
\rangle\right)  \smallskip\\
&  =-\int_{\partial\Omega}\left(  \langle\nu\wedge d\left(  \nu\,\lrcorner
\,\alpha\right)  ;\nu\wedge\beta\rangle+\langle\nu\,\lrcorner\,\delta\left(
\nu\wedge\alpha\right)  ;\nu\,\lrcorner\,\beta\rangle\right)  \smallskip\\
&  +\int_{\partial\Omega}\sum_{1\leq i_{1}<\cdots<i_{k}\leq n-1}\left\langle
\alpha;E_{i_{1}\cdots i_{k}}\right\rangle \left\langle \beta;E_{i_{1}\cdots
i_{k}}\right\rangle \sum_{j\in\{i_{1},\cdots,i_{k}\}}\gamma_{j}\smallskip\\
&  +\int_{\partial\Omega}\sum_{1\leq i_{1}<\cdots<i_{k-1}\leq n-1}\left\langle
\alpha;\nu\wedge E_{i_{1}\cdots i_{k-1}}\right\rangle \left\langle \beta
;\nu\wedge E_{i_{1}\cdots i_{k-1}}\right\rangle \sum_{j\notin\left\{
i_{1},\cdots,i_{k-1}\right\}  }\gamma_{j}\,,
\end{align*}
where $E_{i_{1}\cdots i_{k}}=E_{i_{1}}\wedge\cdots\wedge E_{i_{k}}\,.$ In
particular the following two identities hold. For every $\alpha\in W_{T}%
^{1,2}\left(  \Omega;\Lambda^{k}\right)  $%
\[
\int_{\Omega}\left(  \left\vert d\alpha\right\vert ^{2}+\left\vert
\delta\alpha\right\vert ^{2}-|\nabla\alpha|^{2}\right)  =\int_{\partial\Omega
}\sum_{1\leq i_{1}<\cdots<i_{k-1}\leq n-1}\left\vert \left\langle \alpha
;\nu\wedge E_{i_{1}\cdots i_{k-1}}\right\rangle \right\vert ^{2}\sum
_{j\notin\left\{  i_{1},\cdots,i_{k-1}\right\}  }\gamma_{j}%
\]
and for every $\alpha\in W_{N}^{1,2}\left(  \Omega;\Lambda^{k}\right)  $%
\[
\int_{\Omega}\left(  \left\vert d\alpha\right\vert ^{2}+\left\vert
\delta\alpha\right\vert ^{2}-|\nabla\alpha|^{2}\right)  =\int_{\partial\Omega
}\sum_{1\leq i_{1}<\cdots<i_{k}\leq n-1}\left\vert \left\langle \alpha
;E_{i_{1}\cdots i_{k}}\right\rangle \right\vert ^{2}\sum_{j\in\left\{
i_{1},\cdots,i_{k}\right\}  }\gamma_{j}\,.
\]

\end{theorem}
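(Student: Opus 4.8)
The plan is to read off the statement from the bilinear, boundary-condition-free form of the fundamental Gaffney identity, and then to recover the two displayed special cases by imposing the tangential, respectively normal, condition. The master identity I would start from is the bilinear version of the one underlying (\ref{Equation 1 dans CT=1}) (Theorem 5.7 in \cite{Csato-Dac-Kneuss(livre)}; see also \cite{Csato-Dac 2102}): for all $\alpha,\beta\in C^1(\overline\Omega;\Lambda^k)$,
\begin{align*}
&\int_\Omega\left(\langle d\alpha;d\beta\rangle+\langle\delta\alpha;\delta\beta\rangle-\langle\nabla\alpha;\nabla\beta\rangle\right)\\
&=\int_{\partial\Omega}\Big(-\langle\nu\wedge d(\nu\,\lrcorner\,\alpha);\nu\wedge\beta\rangle-\langle\nu\,\lrcorner\,\delta(\nu\wedge\alpha);\nu\,\lrcorner\,\beta\rangle\\
&\qquad+\langle L^\nu(\nu\wedge\alpha);\nu\wedge\beta\rangle+\langle K^\nu(\nu\,\lrcorner\,\alpha);\nu\,\lrcorner\,\beta\rangle\Big).
\end{align*}
Granting this, the first assertion is immediate: I would substitute into the last two boundary integrands the two expressions furnished by Lemma \ref{lemma:L nu equal bilinear form with curvatures and dual version}, which are exactly the two principal-curvature sums in the statement.

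Should a self-contained derivation of the master identity be preferred to citing \cite{Csato-Dac-Kneuss(livre)}, I would obtain it by integrating by parts. Combining the two adjunction formulas for $d$ and $\delta$ with the flat Weitzenb\"ock identity on $\mathbb{R}^n$ (where the Hodge Laplacian equals the connection Laplacian, there being no interior curvature term), all volume contributions cancel and one is left with a pure boundary integral. I would then decompose its integrand along the orthonormal frame $\{\nu,E_1,\dots,E_{n-1}\}$, expressing each derivative via (\ref{Equation: derivee en fonction tangente et normale}) and each form via the splitting $\omega=\nu\wedge(\nu\,\lrcorner\,\omega)+\nu\,\lrcorner\,(\nu\wedge\omega)$ of (\ref{equation:decomposing omega by nu}); this is the same bookkeeping as in the proof of Theorem \ref{piecewisesmoothformula} (and, diagonally, in Lemma \ref{afterintegrationbypartslipschitz}), with the tangential derivatives of the unit normal producing the second fundamental form and hence $L^\nu,K^\nu$. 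I expect this step --- cleanly separating the genuine normal-derivative terms $d(\nu\,\lrcorner\,\alpha)$ and $\delta(\nu\wedge\alpha)$ from the terms in which the derivative falls on $\nu$ --- to be the one delicate point.

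For the two ``in particular'' identities I would set $\beta=\alpha$. If $\alpha\in W_T^{1,2}(\Omega;\Lambda^k)$, then $\nu\wedge\alpha=0$ on $\partial\Omega$, so by (\ref{equation:decomposing omega by nu}) $\alpha=\nu\wedge(\nu\,\lrcorner\,\alpha)$ is purely normal and $\langle\alpha;E_{i_1\cdots i_k}\rangle=0$, which removes the first curvature sum; the term $\langle\nu\wedge d(\nu\,\lrcorner\,\alpha);\nu\wedge\alpha\rangle$ vanishes because its second slot is $\nu\wedge\alpha=0$; and $\langle\nu\,\lrcorner\,\delta(\nu\wedge\alpha);\nu\,\lrcorner\,\alpha\rangle=0$ since on $\partial\Omega$ the tangential derivatives of $\nu\wedge\alpha$ vanish (as $\nu\wedge\alpha\equiv0$ there), so that $\delta(\nu\wedge\alpha)=-\nu\,\lrcorner\,\nabla_\nu(\nu\wedge\alpha)$ and $\nu\,\lrcorner\,(\nu\,\lrcorner\,\cdot)=0$. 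Only the $K^\nu$ sum survives, which is the claimed formula. The case $\alpha\in W_N^{1,2}(\Omega;\Lambda^k)$ is entirely dual: $\nu\,\lrcorner\,\alpha=0$ makes $\alpha$ purely tangential, so the $K^\nu$ sum and the $\delta(\nu\wedge\alpha)$ term drop out, while $\langle\nu\wedge d(\nu\,\lrcorner\,\alpha);\nu\wedge\alpha\rangle=0$ follows from $d(\nu\,\lrcorner\,\alpha)=\nu\wedge\nabla_\nu(\nu\,\lrcorner\,\alpha)$ on $\partial\Omega$ together with $\nu\wedge(\nu\wedge\cdot)=0$, leaving only the first ($L^\nu$) curvature sum.
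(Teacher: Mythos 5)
Your proposal is correct and, for the main bilinear identity, is exactly the paper's proof: quote the boundary-condition-free bilinear identity of Theorem 5.7 in \cite{Csato-Dac-Kneuss(livre)} and substitute the two curvature expressions of Lemma \ref{lemma:L nu equal bilinear form with curvatures and dual version} into the $L^{\nu}$ and $K^{\nu}$ boundary terms. The only divergence is in the two ``in particular'' identities: the paper obtains them by invoking (\ref{tangentialcondition}) and (\ref{normalcondition}) of Theorem \ref{piecewisesmoothformula}, whereas you specialize the master identity to $\beta=\alpha$ and check directly that under $\nu\wedge\alpha=0$ (respectively $\nu\,\lrcorner\,\alpha=0$) the terms $\langle\nu\wedge d\left(  \nu\,\lrcorner\,\alpha\right)  ;\nu\wedge\alpha\rangle$, $\langle\nu\,\lrcorner\,\delta\left(  \nu\wedge\alpha\right)  ;\nu\,\lrcorner\,\alpha\rangle$ and the complementary curvature sum all vanish; your vanishing arguments (tangential derivatives of a form that is identically zero on $\partial\Omega$ vanish, plus $\nu\,\lrcorner\,\left(\nu\,\lrcorner\,\cdot\right)=0$ and $\nu\wedge\left(\nu\wedge\cdot\right)=0$) are sound and make this sub-step self-contained rather than dependent on the polytope section.
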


\begin{remark}
If $k=0$ or $k=n,$ then the the right hand side has to be understood as $0$ by
definition. If $k=1,$ the theorem reads as%
\begin{align*}
&  \int_{\Omega}\left(  \left\langle d\alpha;d\beta\right\rangle
+\langle\delta\alpha;\delta\beta\rangle-\langle\nabla\alpha;\nabla\beta
\rangle\right)  \smallskip\\
&  =-\int_{\partial\Omega}\left(  \langle\nu\wedge d\left(  \nu\,\lrcorner
\,\alpha\right)  ;\nu\wedge\beta\rangle+\langle\nu\,\lrcorner\,\delta\left(
\nu\wedge\alpha\right)  ;\nu\,\lrcorner\,\beta\rangle\right)  \smallskip\\
&  +\int_{\partial\Omega}\sum_{i=1}^{n-1}\gamma_{i}\left\langle \alpha
;E_{i}\right\rangle \left\langle \beta;E_{i}\right\rangle +\int_{\partial
\Omega}\left\langle \alpha;\nu\right\rangle \left\langle \beta;\nu
\right\rangle \sum_{j=1}^{n-1}\gamma_{j}%
\end{align*}
and, in particular,%
\[
\int_{\Omega}\left(  \left\vert d\alpha\right\vert ^{2}+\left\vert
\delta\alpha\right\vert ^{2}-|\nabla\alpha|^{2}\right)  =\left\{
\begin{array}
[c]{cl}%
{\displaystyle\int_{\partial\Omega}}
\left(  \gamma_{1}+\cdots+\gamma_{n-1}\right)  \left\vert \left\langle
\alpha;\nu\right\rangle \right\vert ^{2} & \text{if }\alpha\in W_{T}%
^{1,2}\left(  \Omega;\Lambda^{1}\right)  \medskip\\%
{\displaystyle\sum_{i=1}^{n-1}}
{\displaystyle\int_{\partial\Omega}}
\gamma_{i}\left\vert \left\langle \alpha;E_{i}\right\rangle \right\vert ^{2} &
\text{if }\alpha\in W_{N}^{1,2}\left(  \Omega;\Lambda^{1}\right)  .
\end{array}
\right.
\]

\end{remark}

\begin{proof}
The theorem follows from Theorem 5.7 in \cite{Csato-Dac-Kneuss(livre)} and
Lemma \ref{lemma:L nu equal bilinear form with curvatures and dual version}.
The last two identities also follow from (\ref{tangentialcondition}),
respectively (\ref{normalcondition}), of Theorem \ref{piecewisesmoothformula}
and the remark following it.\smallskip
\end{proof}

We now discuss the more general version of the identity. We assume that
$\Omega$ is an $n-$dimensional compact orientable smooth Riemannian manifold
with boundary $\partial\Omega.$ We also adopt the following abbreviations%
\[
\mathcal{T}_{n-1}^{k}=\left\{  I=\left(  i_{1},\cdots,i_{k}\right)
\in\mathbb{N}^{k}:1\leq i_{1}<\cdots<i_{k}\leq n-1\right\}
\]%
\[
E_{I}=\left(  E_{i_{1}},\cdots,E_{i_{k}}\right)  \quad\text{for }%
I\in\mathcal{T}_{n-1}^{k}%
\]%
\[
I^{c}\quad\text{is the complement of $I$ in }\left\{  1,\cdots,n-1\right\}  .
\]

In the next theorem the quantity $F_{k}$ is the linear $0-$th order (not
differential) operator given by the Bochner-Weitzenb\"{o}ck formula, see
\cite{Csato 2013} Section 2.2 for more details. On a flat manifold, for
instance $\mathbb{R}^{n},$ the operator $F_{k}$ is equal to $0$ (which
explains its absence in Theorem \ref{thm:fund identity with curvatures}).

\begin{theorem}
\label{thm:fund identity with curvatures on manifold}Let $0\leq k\leq n.$ Then
every $\alpha,\,\beta\in C^{1}\left(  \overline{\Omega};\Lambda^{k}\right)  $
satisfy the equation%
\begin{align*}
&  \int_{\Omega}\left(  \left\langle d\alpha;d\beta\right\rangle +\left\langle
\delta\alpha;\delta\beta\right\rangle -\left\langle \nabla\alpha;\nabla
\beta\right\rangle \right)  -\int_{\Omega}\left\langle F_{k}\alpha
;\beta\right\rangle \smallskip\\
&  =-\int_{\partial\Omega}\left(  \left\langle \nu\wedge d\left(
\nu\,\lrcorner\,\alpha\right)  ;\nu\wedge\beta\right\rangle +\left\langle
\nu\,\lrcorner\,\delta\left(  \nu\wedge\alpha\right)  ;\nu\,\lrcorner
\,\beta\right\rangle \right)  \smallskip\\
&  +\int_{\partial\Omega}\sum_{I\in\mathcal{T}_{n-1}^{k}}\alpha\left(
E_{I}\right)  \beta\left(  E_{I}\right)  \sum_{j\in I}\gamma_{j}%
+\int_{\partial\Omega}\sum_{I\in\mathcal{T}_{n-1}^{k-1}}\alpha\left(
\nu,E_{I}\right)  \beta\left(  \nu,E_{I}\right)  \sum_{j\in I^{c}}\gamma
_{j}\,.
\end{align*}

\end{theorem}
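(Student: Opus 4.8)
The plan is to derive the identity from the Bochner--Weitzenb\"ock formula together with the two Green's formulas on a compact manifold with boundary, paralleling the proof of Theorem \ref{thm:fund identity with curvatures} but keeping track of the ambient curvature. The starting point is the pointwise Weitzenb\"ock decomposition of the Hodge Laplacian, $d\delta+\delta d=\nabla^{\ast}\nabla+F_{k}$, where $F_{k}$ is the zeroth order curvature operator (see \cite{Csato 2013}); on a flat manifold $F_{k}=0$, which is precisely why this term is absent from Theorem \ref{thm:fund identity with curvatures}.

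First I would integrate by parts twice. The Green's formula for $d$ and $\delta$ gives $\int_{\Omega}\left(\left\langle d\alpha;d\beta\right\rangle +\left\langle \delta\alpha;\delta\beta\right\rangle \right)=\int_{\Omega}\left\langle \left(d\delta+\delta d\right)\alpha;\beta\right\rangle +\mathcal{B}_{1}$, while the Green's formula for the connection Laplacian gives $\int_{\Omega}\left\langle \nabla\alpha;\nabla\beta\right\rangle =\int_{\Omega}\left\langle \nabla^{\ast}\nabla\alpha;\beta\right\rangle +\mathcal{B}_{2}$, the boundary contributions $\mathcal{B}_{1},\mathcal{B}_{2}$ involving $\nu\wedge\cdot$, $\nu\,\lrcorner\,\cdot$ and $\nabla_{\nu}$. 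Subtracting the two and invoking the Weitzenb\"ock formula, the two interior integrals collapse to $\int_{\Omega}\left\langle F_{k}\alpha;\beta\right\rangle $, so that the full left-hand side of the theorem (with its $-\int_{\Omega}\left\langle F_{k}\alpha;\beta\right\rangle $ term) reduces to the purely boundary quantity $\mathcal{B}_{1}-\mathcal{B}_{2}$.

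The heart of the argument is the evaluation of $\mathcal{B}_{1}-\mathcal{B}_{2}$ on $\partial\Omega$. At each boundary point I would work in the orthonormal principal frame $\left\{\nu,E_{1},\cdots,E_{n-1}\right\}$ and split every covariant derivative into its tangential part (along the $E_{l}$) and its normal part (along $\nu$). The purely normal pieces reassemble into $-\left\langle \nu\wedge d\left(\nu\,\lrcorner\,\alpha\right);\nu\wedge\beta\right\rangle -\left\langle \nu\,\lrcorner\,\delta\left(\nu\wedge\alpha\right);\nu\,\lrcorner\,\beta\right\rangle $, while the tangential pieces, after using the Weingarten relation $\nabla_{E_{l}}\nu=\gamma_{l}E_{l}$ that defines the principal directions, produce the two curvature sums. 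This reorganization is the manifold counterpart of Lemma \ref{lemma:L nu equal bilinear form with curvatures and dual version} and rests on the algebraic identities of Lemma \ref{Lemme 0 algebrique}.

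The main obstacle is this boundary computation in the covariant setting: unlike in $\mathbb{R}^{n}$ one cannot differentiate coordinate components directly, and the connection coefficients clutter the intermediate expressions. The clean way around it is to observe that the boundary integrand is \emph{intrinsic and local}: it depends only on the restrictions of $\alpha,\beta$ to $\partial\Omega$, on $\nu$, and on the second fundamental form, none of which sees the ambient curvature of $\Omega$. Consequently $\mathcal{B}_{1}-\mathcal{B}_{2}$ is governed by the very same pointwise manipulation as in the Euclidean Theorem \ref{thm:fund identity with curvatures}, so by Lemma \ref{lemma:L nu equal bilinear form with curvatures and dual version} it equals the stated sums $\sum_{I\in\mathcal{T}_{n-1}^{k}}\alpha\left(E_{I}\right)\beta\left(E_{I}\right)\sum_{j\in I}\gamma_{j}$ and $\sum_{I\in\mathcal{T}_{n-1}^{k-1}}\alpha\left(\nu,E_{I}\right)\beta\left(\nu,E_{I}\right)\sum_{j\in I^{c}}\gamma_{j}$, which completes the proof.
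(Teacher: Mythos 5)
Your outline is essentially sound but follows a genuinely different, more self-contained route than the paper. The paper's own proof is a two-line citation: it imports the integral identity (with boundary terms packaged in the operator $S_{k}$ and the curvature term $F_{k}$) from Theorem 3.7 and Remark 3.8 (ii) of \cite{Csato 2013}, and then all the actual work done in the paper is Lemma \ref{lemma:S_k in terms of principal directions}, which converts $\left\langle S_{k}\alpha;\beta_{N}\right\rangle$ and $\left\langle S_{n-k}(\ast\alpha);\ast(\beta_{T})\right\rangle$ into the two principal-curvature sums via the second fundamental form. You instead propose to rederive the integral identity from the Weitzenb\"ock decomposition $d\delta+\delta d=\nabla^{\ast}\nabla+F_{k}$ and the three Green's formulas, which is exactly what the cited reference does, so your interior bookkeeping (the collapse to $\int_{\Omega}\left\langle F_{k}\alpha;\beta\right\rangle$ plus $\mathcal{B}_{1}-\mathcal{B}_{2}$) is correct and makes the argument independent of the external theorem. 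The one thin spot is the evaluation of $\mathcal{B}_{1}-\mathcal{B}_{2}$: you justify it by locality and by invoking Lemma \ref{lemma:L nu equal bilinear form with curvatures and dual version}, but that lemma is proved by Euclidean coordinate computations (partial derivatives $\nu_{x_{j}}^{i}$, the extension of the frame to an ambient orthonormal frame of $\mathbb{R}^{n}$) and does not literally apply on a curved manifold; the covariant replacement, phrased through the Gauss--Weingarten relations $\mathrm{II}\left(E_{r},E_{s}\right)=-\gamma_{r}\delta_{rs}\nu$, is precisely Lemma \ref{lemma:S_k in terms of principal directions}, and a careful writeup of your approach would have to reproduce that computation rather than appeal to the flat case. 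Your "intrinsic and local" heuristic is correct in substance (only first derivatives and $\mathrm{II}$ enter the boundary term), but as stated it is an assertion, not a proof, and it is where the entire content of the theorem beyond the citation actually lives.
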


\begin{remark}
In the case $\alpha=\beta$ a similar form of this identity is known as Reilly
formula, see Theorem 3 in \cite{Raulot-Savo} and the references there. In that
case the identity simplifies: using partial integration twice one obtains, see
Remark 3.8 (iv) in \cite{Csato 2013},%
\[
\int_{\partial\Omega}\left(  \left\langle \nu\wedge d\left(  \nu
\,\lrcorner\,\alpha\right)  ;\nu\wedge\alpha\right\rangle +\left\langle
\nu\,\lrcorner\,\delta\left(  \nu\wedge\alpha\right)  ;\nu\,\lrcorner
\,\alpha\right\rangle \right)  =2\int_{\partial\Omega}\left\langle
\nu\,\lrcorner\,\delta\left(  \nu\wedge\alpha\right)  ;\nu\,\lrcorner
\,\alpha\right\rangle .
\]

\end{remark}

\begin{proof}
The theorem follows from \cite{Csato 2013} Theorem 3.7 and Remark 3.8 (ii) and
from Lemma \ref{lemma:S_k in terms of principal directions} below.\smallskip
\end{proof}

The following Lemma is the analogue of Lemma
\ref{lemma:L nu equal bilinear form with curvatures and dual version}.
$\beta_{N}$ ($\beta_{T}$) denotes the normal (respectively tangential)
component of $\beta$ (see \cite{Csato 2013}).

\begin{lemma}
\label{lemma:S_k in terms of principal directions} Let $S_{k}$ be defined as
in Definition 3.3 in \cite{Csato 2013}. Then the following identities hold%
\begin{align*}
&  \text{(i)}\qquad\left\langle S_{k}\alpha,\beta_{N}\right\rangle =\sum
_{I\in\mathcal{T}_{n-1}^{k-1}}\left[  \alpha\left(  \nu,E_{I}\right)
\beta\left(  \nu,E_{I}\right)  \sum_{j\in I^{c}}\gamma_{j}\right]
\smallskip\\
&  \text{(ii)}\qquad\left\langle S_{n-k}(\ast\alpha),\ast(\beta_{T}%
)\right\rangle =\sum_{I\in\mathcal{T}_{n-1}^{k}}\left[  \alpha\left(
E_{I}\right)  \beta\left(  E_{I}\right)  \sum_{j\in I}\gamma_{j}\right]  .
\end{align*}

\end{lemma}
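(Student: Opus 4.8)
The plan is to recognize that, after matching conventions, both identities are simply the manifold restatement of the already-established Lemma~\ref{lemma:L nu equal bilinear form with curvatures and dual version}. Under the standard identification $\alpha(\nu,E_{I})=\langle\alpha;\nu\wedge E_{I}\rangle$ and $\alpha(E_{I})=\langle\alpha;E_{I}\rangle$, and since for $I\in\mathcal{T}_{n-1}^{k-1}$ one has $j\in I^{c}\iff j\notin\{i_{1},\cdots,i_{k-1}\}$, the right-hand side of (i) is verbatim the right-hand side of (\ref{eq:K nu in terms of gamma i}), while the right-hand side of (ii) is verbatim that of (\ref{eq:L nu in terms of gamma i}). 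Thus Lemma~\ref{lemma:L nu equal bilinear form with curvatures and dual version} reduces the claim to the two operator identities
\[
\langle S_{k}\alpha,\beta_{N}\rangle=\langle K^{\nu}(\nu\,\lrcorner\,\alpha);\nu\,\lrcorner\,\beta\rangle,\qquad \langle S_{n-k}(\ast\alpha);\ast\beta_{T}\rangle=\langle L^{\nu}(\nu\wedge\alpha);\nu\wedge\beta\rangle,
\]
so it suffices to evaluate the $S_{k}$-side and check it yields the same curvature sums.

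First I would establish (i) by a pointwise computation at a fixed $x_{0}\in\partial\Omega$. Choosing the orthonormal frame $\{\nu,E_{1},\cdots,E_{n-1}\}$ of principal directions diagonalises the shape operator of $\partial\Omega$ with eigenvalues $\gamma_{1},\cdots,\gamma_{n-1}$, and by the bilinearity of both sides in $(\alpha,\beta)$ I may restrict to the frame-adapted basis of $\Lambda^{k}(\mathbb{R}^{n})$, namely the purely tangential forms $E_{I}$ ($I\in\mathcal{T}_{n-1}^{k}$) and the normal forms $\nu\wedge E_{I}$ ($I\in\mathcal{T}_{n-1}^{k-1}$). Since $\beta_{N}$ retains only the components along the $\nu\wedge E_{I}$, the pairing $\langle S_{k}\alpha,\beta_{N}\rangle$ reduces to the action of $S_{k}$ on these normal basis forms.

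The main step is to unwind Definition~3.3 of \cite{Csato 2013} and verify that in this principal frame $S_{k}$ acts diagonally on the adapted basis, the eigenvalue attached to $\nu\wedge E_{I}$ being $\sum_{j\in I^{c}}\gamma_{j}$, i.e. the sum of the shape operator eigenvalues over the $n-k$ tangential directions absent from $I$. This is the differential-geometric counterpart of the identity $\langle\delta(\nu\wedge\lambda);\lambda\rangle=\sum_{j\in I^{c}}\gamma_{j}$ established in Lemma~\ref{Lemme dans CT=1}, with the flat codifferential replaced by its covariant analogue. Granting this diagonalisation, $\langle S_{k}\alpha,\beta_{N}\rangle=\sum_{I\in\mathcal{T}_{n-1}^{k-1}}\alpha(\nu,E_{I})\beta(\nu,E_{I})\sum_{j\in I^{c}}\gamma_{j}$, which is precisely (i).

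Finally I would deduce (ii) from (i) by Hodge duality, exactly as Lemma~\ref{lemma:L nu equal bilinear form with curvatures and dual version} obtains its first identity from the second via \cite{Csato-Dac-Kneuss(livre)} Lemma~5.3. The operator $\ast$ is an isometry sending the tangential component $\beta_{T}$ of a $k$-form to a sign times the normal component of the $(n-k)$-form $\ast\beta$, so $\langle S_{n-k}(\ast\alpha);\ast\beta_{T}\rangle$ has exactly the shape of the left-hand side of (i) applied in degree $n-k$; invoking the already-proven case (i) for $\ast\alpha,\ast\beta$ interchanges the roles of $I$ and $I^{c}$ and converts $\sum_{j\in I^{c}}\gamma_{j}$ into $\sum_{j\in I}\gamma_{j}$, which is the right-hand side of (ii). The hard part will be the third paragraph: reading off from the precise definition of $S_{k}$ in \cite{Csato 2013} that its eigenvalue on $\nu\wedge E_{I}$ is the complementary partial trace $\sum_{j\in I^{c}}\gamma_{j}$ rather than $\sum_{j\in I}\gamma_{j}$, and keeping the signs and index ranges consistent throughout; once that diagonalisation is secured, both identities follow mechanically.
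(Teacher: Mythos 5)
Your proposal follows essentially the same route as the paper's proof: establish (i) by evaluating $S_{k}$ in the adapted principal frame, where bilinearity and the normality of $\beta_{N}$ reduce everything to the action of $S_{k}$ on the forms $\nu\wedge E_{I}$, on which it acts by the complementary partial trace $\sum_{j\in I^{c}}\gamma_{j}$, and then deduce (ii) by Hodge duality with the relabelling $I\rightarrow I^{c}$. The one step you leave ``granted'' is exactly the computation the paper carries out: unwinding Definition 3.3 of \cite{Csato 2013} with $\mathrm{II}\left(E_{s},E_{r}\right)=-\gamma_{r}\,\delta_{rs}\,\nu$ splits $S_{k}\alpha\left(\nu,E_{I}\right)$ into a full-trace term $\alpha\left(\nu,E_{I}\right)\sum_{j=1}^{n-1}\gamma_{j}$ and a correction $-\alpha\left(\nu,E_{I}\right)\sum_{l=1}^{k-1}\gamma_{i_{l}}$, whose sum is the claimed eigenvalue $\sum_{j\in I^{c}}\gamma_{j}$, so your outline is correct but that verification is the actual content of the proof.
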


\begin{proof}
\emph{Step 1.} We first prove (i). Since $E_{1},\cdots,E_{n-1}$ are principal
directions of $\partial\Omega$ they satisfy that%
\[
\nabla_{E_{i}}\nu=\gamma_{i}\,E_{i}\quad\text{and}\quad\left\langle
E_{i};E_{j}\right\rangle =\delta_{ij}\,.
\]
We now choose $\left(  \nu,E_{1},\cdots,E_{n-1}\right)  $ as an orthonormal
basis of $\mathbb{R}^{n}$ to evaluate the scalar product $\left\langle
S_{k}\alpha;\beta_{N}\right\rangle .$ Tuples $\left(  E_{i_{1}},\cdots
,E_{i_{k}}\right)  $ have no contribution in $\left\langle S_{k}\alpha
;\beta_{N}\right\rangle ,$ because $\beta_{N}$ is normal (actually
$S_{k}\alpha$ too). We therefore obtain%
\begin{equation}
\left\langle S_{k}\alpha;\beta_{N}\right\rangle =\sum_{1\leq i_{1}%
<\cdots<i_{k-1}\leq n-1}S_{k}\alpha\left(  \nu,E_{i_{1}},\cdots,E_{i_{k-1}%
}\right)  \beta\left(  \nu,E_{i_{1}},\cdots,E_{i_{k-1}}\right)  ,
\label{eq:Sk alpha beta N}%
\end{equation}
where we have used that $\beta_{N}\left(  \nu,E_{i_{1}},\cdots,E_{i_{k-1}%
}\right)  =\beta\left(  \nu,E_{i_{1}},\cdots,E_{i_{k-1}}\right)  $ by
definition of the normal component. By definition of $S_{k}\,,$ we find%
\begin{align*}
S_{k}\alpha\left(  \nu,E_{i_{1}},\cdots,E_{i_{k-1}}\right)   &  =-\sum
_{j=1}^{n-1}\sum_{l=1}^{k-1}\alpha\left(  E_{j},E_{i_{1}},\cdots,E_{i_{l-1}%
},\text{II}\left(  E_{j},E_{i_{l}}\right)  ,E_{i_{l+1}},\cdots E_{i_{k-1}%
}\right)  \smallskip\\
&  -\sum_{j=1}^{n-1}\alpha\left(  \text{II}\left(  E_{j},E_{j}\right)
,E_{i_{1}},\cdots,E_{i_{k-1}}\right)  \smallskip\\
&  =A_{\left(  i_{1},\cdots,i_{k-1}\right)  }+B_{\left(  i_{1},\cdots
,i_{k-1}\right)  },
\end{align*}
where $\text{II}$ is the second fundamental form. We have also used that
$\left(  E_{r}\right)  _{T}=E_{r}$ for any $r=1,\cdots,n-1,$ since they are
tangent vectors. We therefore get for any $r,s$%
\[
\text{II}\left(  E_{s},E_{r}\right)  =\left\langle \nabla_{E_{s}}E_{r}%
;\nu\right\rangle \nu=-\left\langle E_{r};\nabla_{E_{s}}\nu\right\rangle
\nu=-\gamma_{r}\,\delta_{rs}\,\nu.
\]
This shows that%
\[
B_{\left(  i_{1},\cdots,i_{k-1}\right)  }=\alpha\left(  \nu,E_{i_{1}}%
,\cdots,E_{i_{k-1}}\right)  \sum_{j=1}^{n-1}\gamma_{j}\,.
\]
In the same way we get%
\begin{align*}
A_{(i_{1},\cdots,i_{k-1})}  &  =-\sum_{l=1}^{k-1}\alpha\left(  E_{i_{l}%
},E_{i_{1}},\cdots,E_{i_{l-1}},\text{II}\left(  E_{i_{l}},E_{i_{l}}\right)
,E_{i_{l+1}},\cdots E_{i_{k-1}}\right)  \smallskip\\
&  =\sum_{l=1}^{k-1}\gamma_{i_{l}}\alpha\left(  E_{i_{l}},E_{i_{1}}%
,\cdots,E_{i_{l-1}},\nu,E_{i_{l+1}},\cdots E_{i_{k-1}}\right)  \smallskip\\
&  =-\alpha\left(  \nu,E_{i_{1}},\cdots,E_{i_{k-1}}\right)  \sum_{l=1}%
^{k-1}\gamma_{i_{l}}\,.
\end{align*}
So we obtain that%
\begin{align*}
S_{k}\alpha\left(  \nu,E_{i_{1}},\cdots,E_{i_{k-1}}\right)   &  =\alpha\left(
\nu,E_{i_{1}},\cdots,E_{i_{k-1}}\right)  \left(  \sum_{j=1}^{n-1}\gamma
_{j}-\sum_{l=1}^{k-1}\gamma_{i_{l}}\right)  \smallskip\\
&  =\alpha\left(  \nu,E_{i_{1}},\cdots,E_{i_{k-1}}\right)  \sum_{j\in I^{c}%
}\gamma_{j}\,.
\end{align*}
From this last equation and (\ref{eq:Sk alpha beta N}) the first identity (i)
follows.\smallskip

\emph{Step 2.} We now deduce (ii) from (i) in the following way%
\[
\left\langle S_{n-k}\left(  \ast\alpha\right)  ;\ast\left(  \beta_{T}\right)
\right\rangle =\sum_{I\in\mathcal{T}_{n-1}^{n-k-1}}\left[  \left(  \ast
\alpha\right)  \left(  \nu,E_{I}\right)  \left(  \ast\beta_{T}\right)  \left(
\nu,E_{I}\right)  \sum_{j\in I^{c}}\gamma_{j}\right]  .
\]
Note that the Hodge $\ast$ operator computes on $k-$forms $\omega$ as%
\[
\left(  \ast\omega\right)  \left(  X_{k+1},\cdots,X_{n}\right)  =\omega\left(
X_{1},\cdots,X_{k}\right)
\]
whenever $\left(  X_{1},\cdots,X_{n}\right)  $ is an orthonormal basis of
$\mathbb{R}^{n}.$ We apply this to $X=\left(  \nu,E_{I},E_{I^{c}}\right)  .$
Thus we obtain that for each $I\in\mathcal{T}^{n-k-1}$%
\[
\left(  \ast\alpha\right)  \left(  \nu,E_{I}\right)  \left(  \ast\beta
_{T}\right)  \left(  \nu,E_{I}\right)  =\alpha\left(  E_{I^{c}}\right)
\beta\left(  E_{I^{c}}\right)  .
\]
This leads to, renaming the summation index $I\rightarrow I^{c},$%
\begin{align*}
\left\langle S_{n-k}\left(  \ast\alpha\right)  ;\ast\left(  \beta_{T}\right)
\right\rangle  &  =\sum_{I\in\mathcal{T}_{n-1}^{\left(  n-1\right)  -k}%
}\left[  \alpha\left(  E_{I^{c}}\right)  \beta\left(  E_{I^{c}}\right)
\sum_{j\in I^{c}}\gamma_{j}\right]  \smallskip\\
&  =\sum_{I\in\mathcal{T}_{n-1}^{k}}\left[  \alpha\left(  E_{I}\right)
\beta\left(  E_{I}\right)  \sum_{j\in I}\gamma_{j}\right]  ,
\end{align*}
which proves (ii).\smallskip
\end{proof}

\begin{acknowledgement}
We would like to thank M. Troyanov for providing us with the references
\cite{Alexander 1977} and \cite{Chern-Lashof 1958}. We also want to thank two
anonymous referees for their comments.
\end{acknowledgement}

\end{document}